\def\nc{\newcommand}
 \def\Om{\Omega}
\nc\pa{\partial}
\nc\CC{\mathbb{C}}
\nc\RR{\mathbb{R}}
\nc\QQ{\mathbb{Q}}
\nc\ZZ{\mathbb{Z}}
\nc\NN{\mathbb{N}}
\nc\m[1]{\left| #1\right|}
\nc\norm[1]{\left\| #1\right\|}
\newtheorem{theorem}{Theorem}[section]
\newtheorem{lemma}[theorem]{Lemma}
\newtheorem{proposition}[theorem]{Proposition}
\newtheorem{definition}[theorem]{Definition}
\newtheorem{remark}[theorem]{Remark}        
\numberwithin{equation}{section}
\begin{document}

\title[]{Characterizations of predual spaces to a class of Sobolev multiplier type spaces}

\author[Keng Hao Ooi]
{Keng Hao Ooi}
\address{Department of Mathematics,
Louisiana State University,
303 Lockett Hall, Baton Rouge, LA 70803, USA.}
\email{kooi1@lsu.edu}

\author[Nguyen Cong Phuc]
{Nguyen Cong Phuc}
\address{Department of Mathematics,
Louisiana State University,
303 Lockett Hall, Baton Rouge, LA 70803, USA.}
\email{pcnguyen@math.lsu.edu}


\begin{abstract} We characterize preduals and  K\"othe duals to a class of  Sobolev multiplier type spaces.  Our results fit in well with the modern  theory of function spaces of harmonic analysis and are also applicable to nonlinear partial differential equations.  We make use of several tools from nonlinear potential theory, weighted norm inequalities, and 	the theory of Banach function spaces to obtain our results.   
\end{abstract}

\maketitle

\section{Introduction}

Let $\alpha$ be a real number and $s>1$. We define the space of Bessel potentials $H^{\alpha, s}=H^{\alpha, s}(\mathbb{R}^n)$, $n\geq 1$, 
as the completion of $C_c^\infty(\mathbb{R}^n)$ with respect to the norm 
$$\|u\|_{H^{\alpha,s}}=\|(1-\Delta)^{\frac{\alpha}{2}}u \|_{L^s(\mathbb{R}^n)}.$$ 
Here the operator $(1-\Delta)^{\frac{\alpha}{2}}$ is understood as
$(1-\Delta)^{\frac{\alpha}{2}}:=\mathcal{F}^{-1}(1+|\xi|^2)^{\frac{\alpha}{2}}\mathcal{F},$ 
where $\mathcal{F}$ is the Fourier transform in $\mathbb{R}^n$. In the case $\alpha>0$, it follows  that (see, e.g., \cite{MH}) a function $u$ belongs to $H^{\alpha,s}$ if and only if
$$u= G_{\alpha}* f$$ 
for some $f\in L^s$, and moreover
$\|u\|_{H^{\alpha, s}}=\|f\|_{L^s}.$ 
Here $G_\alpha$ is  the Bessel kernel of order $\alpha$ defined by $G_{\alpha}(x):= \mathcal{F}^{-1}[(1+|\xi|^2)^{\frac{-\alpha}{2}}](x)$.

The Bessel potential  space $H^{\alpha,s}$, $\alpha>0, s>1$, can be viewed as a fractional generalization  of the standard Sobolev space $W^{k,s}=W^{k,s}(\mathbb{R}^n)$, $k\in \mathbb{N}, s>1$.  The latter, by definition, consists of functions in $L^s$ whose distributional derivatives up to order $k$ also belong to $L^s$. The norm of a function $u\in W^{k,s}$ is given by  $\|u\|_{W^{k,s}}=\sum_{|\beta|=k}\|D^{\beta}u\|_{L^{s}} + \|u\|_{L^{s}}.$
Indeed, it follows from the theory of singular integrals that for any $k\in \mathbb{N}$ and $s>1$ we have $H^{k,s}\approx W^{k,s}$, i.e., there exists a constant $A>0$ such that
\begin{align}\label{WH}
A^{-1}\|u\|_{H^{k,s}}  \leq \|u\|_{W^{k,s}}\leq A \|u\|_{H^{k,s}}.
\end{align}

In this paper we are concerned with the Banach space $M^{\alpha,s}_p=M^{\alpha,s}_p(\mathbb{R}^n)$, $\alpha>0, s>1, p\geq 1$, 
which consists of functions $f\in L^p_{\rm loc}(\mathbb{R}^n)$
such that the trace inequality 
\begin{align}\label{trace}
\left(\int_{\mathbb{R}^n} |u|^s |f|^p dx\right)^{\frac{1}{p}} \leq C \|u\|_{H^{\alpha,s}}^{\frac{s}{p}}
\end{align} 
holds for all $u\in C_c^\infty(\mathbb{R}^n)$. A norm  of a function $f\in M^{\alpha,s}_p$  is defined as the least possible constant $C$ in the above inequality. Of course,  inequality \eqref{trace}  can be equivalently written as 
\begin{align*}
\left(\int_{\mathbb{R}^n} (G_\alpha*h) ^s |f|^p dx\right)^{\frac{1}{p}} \leq C \|h\|_{L^s(\RR^n)}^{\frac{s}{p}},
\end{align*} 
for all nonnegative $h\in L^s(\RR^n)$.

We note that  the space $M^{\alpha,s}_p$ or its homogeneous counterpart appears naturally in many super-critical nonlinear PDEs including the Navier-Stokes system (see, e.g., \cite{VW, KV, HMV, Ph2, Ph3, PhV, NP, AP2, PhPh, L-R, Ger}). It also follows from the  definition that  a function   $f\in M^{\alpha,s}_p$ if and only if $|f|^{p/s}$ belongs to  the space of Sobolev multipliers $M(H^{\alpha,s}\rightarrow L^s)$ that has been studied in  \cite{MS1, MS2}.

The main goal of this paper is to find  `good' predual spaces  to  the space $M^{\alpha,s}_p$, $p>1$.  By a good predual space in this context we mean one that fits in well with the theory of function spaces of harmonic analysis and partial differential equations. For example, 
one should be able to demonstrate the behavior of basic operators such as the Hardy-Littlewood maximal function and Calder\'on-Zygmund  operators on such a space.  
A natural candidate for a predual of $M^{\alpha,s}_p$ is its K\"othe dual space $(M^{\alpha,s}_p)'$ defined by
\begin{equation}\label{KothDef}
(M^{\alpha,s}_p)'=\left\{{\rm measurable~ functions~} f: \sup\int|fg|dx<+\infty\right\},
\end{equation}
where the supremum is taken over all functions $g$ in the unit ball of $M^{\alpha,s}_p$. The norm of $f\in (M^{\alpha,s}_p)'$ is defined as the above 
supremum. Indeed, as in \cite{KV}, using the $p$-convexity of $M^{\alpha,s}_p$   we find that this is the case , i.e.,
$$[(M^{\alpha,s}_p)']^*=M^{\alpha,s}_p,$$
(see Proposition \ref{M'*} below).

We observe however that the space $(M^{\alpha,s}_p)'$ is quite abstract and thus it is desirable to find a more concrete space that is isomorphic to it. In this paper, inspired from the  work  \cite{AX1, AX2}, several other predual spaces to $M^{\alpha,s}_p$ will be constructed. 
In particular, we find a Banach  function space (in the sense of \cite{Lux}) $\mathcal{N}_{p'}^{\alpha, s}$, $p'=p/(p-1)$, such that     
$(M^{\alpha,s}_p)' \approx \mathcal{N}_{p'}^{\alpha, s}$ for all $p>1, \alpha>0$, and $\alpha p\leq n$. 
More importantly, the space  $\mathcal{N}_{p'}^{\alpha, s}$ and  other predual spaces that we construct have a nice structure that  the  Hardy-Littlewood maximal function and  standard Calder\'on-Zygmund type operators behave well on them in a reasonable sense. As a result, the {\it local} Hardy-Littlewood maximal function ${\bf M}^{\rm loc}$ (see \eqref{localmaxdef} below) is shown to be bounded on 
$(M^{\alpha,s}_p)'$. We  remark that whereas the Hardy-Littlewood maximal function ${\bf M}$ is bounded on $M^{\alpha,s}_p$ for  any $p>1$ and $\alpha s\leq n$ (see \cite{MS1}), 
it fails to be bounded on $(M^{\alpha,s}_p)'$. This is because $L^\infty(\RR^n)\hookrightarrow M^{\alpha,s}_p$ (see \eqref{LinftyM} below)  and thus $ (M^{\alpha,s}_p)' \hookrightarrow L^1(\RR^n)$. This phenomenon happens simply because of  the inhomogeneity of the Sobolev space under consideration. In the homogeneous case, where the space $H^{\alpha,s}$ in \eqref{trace} is replaced with its homogeneous counterpart $\dot{H}^{\alpha,s}$ (the space of Riesz potentials),  such a phenomenon does not  exist. We shall discuss the homogeneous case later in 
Section \ref{homogeneoussetting} of the paper.

Our approach to the preduals of $M^{\alpha,s}_p$ is based upon the notion of Bessel  capacity associated to the Bessel potential space 
$H^{\alpha, s}$.  For each set $E\subset\mathbb{R}^n$, the Bessel  capacity of $E$ is defined by 
\begin{equation}\label{BC}
{\rm Cap}_{\alpha, \,s}(E):=\inf\Big\{\|f\|_{L^{s}}^{s}: f\geq0, G_{\alpha}*f\geq 1 {\rm ~on~} E \Big\}.
\end{equation}

Note that functions in  $H^{\alpha, s}$ are generally not continuous. One can think of 
${\rm Cap}(\cdot)$ as a device to measure the discontinuity of functions in $H^{\alpha, s}$, especially when $\alpha s\leq n$. For example, one has the following Lu sin type theorem: If $f\in H^{\alpha, s}$, then $f$ has a  quasicontinuous representative $\tilde{f}$. That is,  $f=\tilde{f}$ a.e. and  $\tilde{f}$ is quasicontinuous
with respect to  the capacity ${\rm Cap}_{\alpha, s}$. By definition, a function $\tilde{f}$ is said to be quasicontinuous
with respect to  ${\rm Cap}_{\alpha, s}$ if for any $\epsilon>0$ there exists an open set $G$ such that ${\rm Cap}_{\alpha,s}(G)<\epsilon$ and $\tilde{f}$ is continuous in $G^c:=\mathbb{R}^n\setminus G$ (see \cite{AH}).

Originally appearing in electrostatics, capacities have found many applications in analysis and PDEs. For example, they are used to study the pointwise behavior of Sobolev functions, removable  singularities of solutions to linear and non-linear PDEs, and Dirichlet problems on arbitrary domains (Wiener's criterion), etc. In this paper, the importance of capacity lies in the following theorem (see, e.g., \cite{MS2, AH}):

\begin{theorem}[Maz'ya-Adams-Dahlberg] \label{MAD} Let    $\alpha>0$, $s>1$ and suppose that $\nu$ is a nonnegative locally finite measure in $\mathbb{R}^n$. Then the following properties are equivalent: 
	
	{\rm (i)} The inequality 
	\begin{equation*}
	\int_{\mathbb{R}^n} (G_{\alpha}*f)^{s}d\nu \leq A_1 \int_{\mathbb{R}^n} f^{s}dx
	\end{equation*}
	holds for all functions $f\in L^{s}(\mathbb{R}^n), f\geq 0$.
	
	{\rm (ii)} The inequality
	\begin{equation*}
	\nu(K)\leq A_2\, {\rm Cap}_{\alpha,  s}(K) 
	\end{equation*}
	holds for all compact sets  $K\subset \mathbb{R}^n$.
	
	{\rm (iii)} The weak-type inequality 
	\begin{equation*}
	\sup_{t>0} t^s \nu(\{x\in \mathbb{R}^n: G_{\alpha}*f (x)>t\})  \leq A_3 \int_{\mathbb{R}^n} f^{s}dx 
	\end{equation*}
	holds for all functions $f\in L^{s}(\mathbb{R}^n), f\geq 0$.
	
	Moreover, the least possible values of $A_i$, $i=1,2,3$, are equivalent.
\end{theorem}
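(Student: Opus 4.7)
The plan is to prove the cyclic chain (i) $\Rightarrow$ (iii) $\Rightarrow$ (ii) $\Rightarrow$ (i), with the last implication being by far the most delicate. For (i) $\Rightarrow$ (iii), Chebyshev's inequality applied to the strong-type bound gives, for nonnegative $f$ and $t>0$,
$$t^s \nu(\{G_\alpha*f>t\})\leq \int_{\{G_\alpha*f>t\}} (G_\alpha*f)^s\, d\nu\leq A_1\int_{\mathbb{R}^n}f^s\, dx,$$
so $A_3\leq A_1$.

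For (iii) $\Rightarrow$ (ii), fix a compact $K\subset\mathbb{R}^n$ and any nonnegative $f\in L^s$ admissible in \eqref{BC}, so that $G_\alpha*f\geq 1$ on $K$. For any $\delta>0$, the perturbation $(1+\delta)f$ satisfies $K\subset \{G_\alpha*[(1+\delta)f]>1\}$, so applying (iii) at $t=1$ yields $\nu(K)\leq A_3(1+\delta)^s\|f\|_{L^s}^s$. Taking the infimum over admissible $f$ in \eqref{BC} and letting $\delta\to 0^+$ gives (ii) with $A_2\leq A_3$.

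The main obstacle is (ii) $\Rightarrow$ (i). The strategy is to combine the layer-cake formula
$$\int_{\mathbb{R}^n}(G_\alpha*f)^s\, d\nu=\int_0^\infty \nu(\{G_\alpha*f>t\})\, d(t^s)$$
with Maz'ya's \emph{capacitary strong-type inequality}
$$\int_0^\infty \mathrm{Cap}_{\alpha,s}(\{G_\alpha*f\geq t\})\, d(t^s)\leq C\int_{\mathbb{R}^n}f^s\, dx, \qquad f\geq 0.$$
Granting this, inner regularity of $\nu$ together with hypothesis (ii) applied to compact subsets of each open level set yield $\nu(\{G_\alpha*f>t\})\leq A_2\,\mathrm{Cap}_{\alpha,s}(\{G_\alpha*f>t\})$; integrating against $d(t^s)$ and chaining the two displays gives (i) with $A_1\leq CA_2$. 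The three constants are then equivalent by composition: $A_1\leq CA_2\leq CA_3\leq CA_1$.

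The real difficulty therefore lies in the capacitary strong-type inequality itself, which is the heart of the theorem. The approach I would take is the classical one of Maz'ya: dyadically decompose $G_\alpha*f$ along the level sets $E_k=\{G_\alpha*f\geq 2^k\}$, replace each $E_k$ by an almost extremal capacitary potential $U_k$ with $\|U_k\|_{H^{\alpha,s}}^s\lesssim \mathrm{Cap}_{\alpha,s}(E_k)$, and exploit positivity and the maximum principle for Bessel potentials, combined with the nonlinear potential theory of Havin--Maz'ya (identifying the capacitary potential as a nonlinear Riesz-type potential) and duality in $L^s$, to reassemble the dyadic capacities $\sum_k 2^{ks}\mathrm{Cap}_{\alpha,s}(E_k)$ into a single integral controlled by $\|f\|_{L^s}^s$. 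This delicate passage from weak-type capacitary control to strong-type capacitary control is essentially the only serious step; the remaining implications are, as shown above, routine bookkeeping.
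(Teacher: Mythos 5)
The paper does not prove this theorem; it cites it directly from \cite{MS2, AH}, so there is no internal proof to compare against. Your proposed cycle (i)~$\Rightarrow$~(iii)~$\Rightarrow$~(ii)~$\Rightarrow$~(i) is the standard one from those references, and the two easy implications are correct: the Chebyshev step for (i)~$\Rightarrow$~(iii) and the dilation-by-$(1+\delta)$ device for (iii)~$\Rightarrow$~(ii), which is needed to pass from the closed level set $\{G_\alpha*f\geq 1\}$ containing $K$ to the open set $\{G_\alpha*[(1+\delta)f]>1\}$ on which the weak-type estimate applies. The (ii)~$\Rightarrow$~(i) reduction via the layer-cake formula, inner regularity of $\nu$ on the open superlevel sets of the lower semicontinuous function $G_\alpha*f$, and Maz'ya's capacitary strong-type inequality is also the correct reduction.

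The one substantive issue is that the capacitary strong-type inequality, which you correctly identify as the heart of the matter, is only described at a high level rather than proved. Your sketch (dyadic level sets $E_k$, capacitary extremals, the boundedness principle for nonlinear potentials, duality in $L^s$) gestures at the right circle of ideas, but as written it does not give the decisive estimate that converts the weak-type capacitary bound $\mathrm{Cap}_{\alpha,s}(E_k)\leq C\, 2^{-ks}\|f\|_{L^s}^s$ into the summable bound $\sum_k 2^{ks}\mathrm{Cap}_{\alpha,s}(E_k)\lesssim \|f\|_{L^s}^s$; the latter requires a genuine argument (convexity/duality as in Adams--Hedberg, Theorem 7.1.1, or Maz'ya's truncation lemma), not merely the existence of near-extremal potentials. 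Since the paper you are working from also treats this as a black box, citing it is perfectly reasonable here, but the submission should be explicit that the strong-type inequality is being invoked from the literature rather than established.
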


As an application of Theorem \ref{MAD}, we find that the norm of a function $f\in M^{\alpha,s}_p$ is equivalent to the quantity
\begin{align}\label{sobolev}
\sup_{K}\left(\frac{\int_{K}|f(x)|^{p}dx}{\text{Cap}_{\alpha,s}(K)}\right)^{1/p},
\end{align}
where the supremum is taken over all compact sets $K\subset{\mathbb{R}}^{n}$ with non-zero capacity. 
Thus in what follows,  we shall tactically use \eqref{sobolev} as the norm for functions $f$ in $M^{\alpha,s}_p$, i.e., we redefine
$$\|f\|_{M^{\alpha,s}_p}:=\sup_{K}\left(\frac{\int_{K}|f(x)|^{p}dx}{\text{Cap}_{\alpha,s}(K)}\right)^{1/p}, \qquad p\geq 1.$$
 
It is worth mentioning that one also has (see \cite[Remark 3.1.1]{MS2}):
$$\|f\|_{M^{\alpha,s}_p} \simeq\sup_{K:\, {\rm dian}(K)\leq 1}\left(\frac{\int_{K}|f(x)|^{p}dx}{\text{Cap}_{\alpha,s}(K)}\right)^{1/p}.$$
Thus in view of \eqref{alphasn} below,  we have 
$\|f\|_{M^{\alpha,s}_p} \simeq \sup_{x\in \RR^n} \|f\|_{L^{p}(B_1(x))}$ provided  $\alpha s> n$. That is, when $\alpha s> n$, $M^{\alpha,s}_p$ can be identified with the space of uniformly local $L^p$ functions in $\RR^n$.
 For this reason, 
 we shall be mainly interested in   the case $\alpha s\leq n$. On the other hand, for $\alpha s<n$ by \eqref{rleq1} below we see that $M^{\alpha,s}_p$ is continuously embedded into a local Morrey space.

Motivated from (ii) of Theorem \ref{MAD},  we  also define $\mathfrak{M}^{\alpha,s}=\mathfrak{M}^{\alpha,s}(\mathbb{R}^n)$ as the space  of  locally finite signed measure $\mu$ in $\mathbb{R}^n$ such that the norm $\|\mu\|_{\mathfrak{M}^{\alpha,s}}<+\infty$, where 
\begin{align*}
\|\mu\|_{\mathfrak{M}^{\alpha,s}}:=\sup_{K}\dfrac{|\mu|(K)}{\text{Cap}_{\alpha,s}(K)}
\end{align*}
with the supremum being taken over all compact sets $K\subset{\mathbb{R}}^{n}$ such that $\text{Cap}_{\alpha,s}(K)\not=0$.
It is obvious that  $M^{\alpha,s}_1$ is continuously embedded into $\mathfrak{M}^{\alpha,s}$.

The notion of Choquet integral will be important in this work. Let $w:\RR^n \rightarrow [0,\infty]$ be defined  $\text{Cap}_{\alpha,s}$-quasieverywhere, i.e., defined except for only a set of 
zero  capacity $\text{Cap}_{\alpha,s}$. The Choquet integral of $w$ is defined by 
\begin{equation}\label{ChoI}
\int_{\RR^n} w d C:=\int_{0}^{\infty}\text{Cap}_{\alpha,s}(\{x\in\mathbb{R}^n: w(x)>t\})dt.
\end{equation}

We let  $L^1(C)$ be the space of quasicontinuous (hence  quasieverywhere defined) functions $f$ in $\mathbb{R}^n$ such that  
\begin{align}\label{L1C}
\|f\|_{L^1(C)} := \int |f| d C <+\infty.
\end{align}

In general, $L^1(C)$ is only a quasi-Banach space (see Proposition \ref{completeness} below) as $\|\cdot\|_{L^1(C)}$ may not satisfy the triangle inequality. However, by a theorem of Choquet (see \cite{Cho, Den}), $\|\cdot\|_{L^1(C)}$ satisfies the triangle inequality (hence $L^1(C)$ is a Banach space) if and only if the associated capacity $\text{Cap}_{\alpha,s}$ is strongly subadditive. By definition, the  capacity $\text{Cap}_{\alpha,s}$ is strongly subadditive  if for  any two sets $E_{1},E_{2}\subset{\mathbb{R}}^{n}$, 
\begin{align}\label{StrongS}
\text{Cap}_{\alpha,s}(E_{1}\cup E_{2})+\text{Cap}_{\alpha,s}(E_{1}\cap E_{2})\leq\text{Cap}_{\alpha,s}(E_{1})+\text{Cap}_{\alpha,s}(E_{2}).
\end{align} 

It is known that $\text{Cap}_{\alpha,2}$, $0<\alpha\leq 1$ is strongly subadditive, and $\text{Cap}_{1,s}$, $s>1$, is equivalent to one that is  strongly subadditive
(see Section \ref{Pre}).

Our first result provides another equivalent norm for the space   $M^{\alpha,s}_p$, $p>1$.

\begin{theorem}\label{firsttheorem}
	For $p>1$ and $\alpha>0, s>1$, with $\alpha s\leq n$, we have 
	\begin{align*}
	\|f\|_{M^{\alpha,s}_p}\simeq \sup_{w}\left(\int_{{\RR}^{n}}|f(x)|^{p} w(x)dx\right)^{1/p},
	\end{align*}
	where the supremum is taken over all nonnegative  $w\in L^{1}(C)\cap A^{\rm loc}_1$ with $\|w\|_{L^{1}(C)}\leq 1$ and $[w]_{A^{\rm loc}_1}\leq {\bf \overline{c}}(n,\alpha,s)$ for a constant ${\bf \overline{c}}(n,\alpha,s)\geq 1$ that depends only on  $n,\alpha$ and $s$.
\end{theorem}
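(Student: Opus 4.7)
The plan is to prove the equivalence by two one-sided bounds: a layer-cake argument for $\sup_w \int|f|^p w\,dx \lesssim \|f\|_{M^{\alpha,s}_p}^p$, and, for each compact $K$ with $\text{Cap}_{\alpha,s}(K)>0$, the construction of an admissible weight $w_K$ realizing the reverse bound up to a universal constant. For the easy direction, given any nonnegative $w$ with $\|w\|_{L^1(C)}\leq 1$ (the $A_1^{\rm loc}$ and quasicontinuity constraints are not used here), Fubini combined with the capacitary definition of $\|\cdot\|_{M^{\alpha,s}_p}$, extended from compact to Borel sets by outer regularity of $\text{Cap}_{\alpha,s}$, gives
\begin{equation*}
\int_{\RR^n}|f|^p w\,dx = \int_0^\infty \int_{\{w>t\}} |f|^p\,dx\,dt \leq \|f\|_{M^{\alpha,s}_p}^p \int_0^\infty \text{Cap}_{\alpha,s}(\{w>t\})\,dt \leq \|f\|_{M^{\alpha,s}_p}^p.
\end{equation*}

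\textbf{Construction for the reverse direction.} For each compact $K$ of positive capacity, motivated by the Coifman--Rochberg mechanism for producing $A_1$-weights, I would take
\begin{equation*}
w_K \,:=\, \frac{1}{{\bf \overline{c}}\,\text{Cap}_{\alpha,s}(K)}\,\bigl({\bf M}^{\rm loc}\chi_K\bigr)^{\delta},
\end{equation*}
for an exponent $\delta\in(0,1)$ and a normalizing constant ${\bf \overline{c}}\geq 1$ depending only on $n,\alpha,s$. A local version of the Coifman--Rochberg lemma gives $({\bf M}^{\rm loc}\chi_K)^\delta\in A_1^{\rm loc}$ with constant depending only on $n$ and $\delta$. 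Lebesgue differentiation yields ${\bf M}^{\rm loc}\chi_K=1$ at a.e.\ density point of $K$, so $w_K\geq 1/({\bf \overline{c}}\,\text{Cap}_{\alpha,s}(K))$ a.e.\ on $K$ and hence
\begin{equation*}
\int_{\RR^n}|f|^p w_K\,dx \,\geq\, \frac{1}{{\bf \overline{c}}\,\text{Cap}_{\alpha,s}(K)}\int_K |f|^p\,dx.
\end{equation*}
Taking the supremum over $K$ and using the capacitary norm for $\|f\|_{M^{\alpha,s}_p}$ then produces the reverse direction, provided each $w_K$ is shown to lie in the admissible class.

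\textbf{Main obstacle: the capacitary $L^1(C)$ bound.} The decisive step is the capacitary strong-type estimate
\begin{equation*}
\int_{\RR^n}\bigl({\bf M}^{\rm loc}\chi_K\bigr)^{\delta}\,dC \,\leq\, C\,\text{Cap}_{\alpha,s}(K),
\end{equation*}
with $C$ independent of $K$; then choosing ${\bf \overline{c}}\geq C$ forces $\|w_K\|_{L^1(C)}\leq 1$. Since ${\bf M}^{\rm loc}\chi_K\leq 1$, a layer-cake rewriting reduces it to $\int_0^1 \lambda^{\delta-1}\,\text{Cap}_{\alpha,s}\bigl(\{{\bf M}^{\rm loc}\chi_K>\lambda\}\bigr)\,d\lambda \lesssim \text{Cap}_{\alpha,s}(K)$. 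I would attack this by a Vitali extraction from each super-level set of disjoint balls $B_{r_j}$ with $|B_{r_j}\cap K|>\lambda|B_{r_j}|$ and $r_j\leq 1$, then combining countable subadditivity of $\text{Cap}_{\alpha,s}$ with the scaling $\text{Cap}_{\alpha,s}(B_r)\simeq r^{n-\alpha s}$ for small $r$ (logarithmically modified at the endpoint $\alpha s=n$). The obstacle is the exponent mismatch: the Vitali argument directly controls $\sum_j r_j^n$ via $|K|/\lambda$, whereas the required target is $\sum_j r_j^{n-\alpha s}$, and this discrepancy has to be absorbed into the factor $\lambda^{\delta-1}$ by choosing $\delta$ slightly above the critical value $(n-\alpha s)/n$. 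Together with $\delta<1$ from Coifman--Rochberg, this isolates a nonempty range $\delta=\delta(n,\alpha,s)$ and gives the universal constant ${\bf \overline{c}}(n,\alpha,s)$; the borderline case $\alpha s=n$ requires a separate argument exploiting the logarithmic growth of the Bessel capacity of small balls.
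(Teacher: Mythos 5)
Your easy direction coincides with the paper's, but for the reverse bound you and the paper diverge completely. The paper's construction is potential-theoretic: for each compact $E$ of positive capacity they take the nonlinear capacitary potential $V^{E}=G_\alpha*\bigl((G_\alpha*\mu^{E})^{1/(s-1)}\bigr)$ of the capacitary measure $\mu^{E}$, prove via a fairly involved direct computation (Theorem 3.1) that $(V^{E})^\delta\in A_1^{\rm loc}$ with constant depending only on $n,\alpha,s,\delta$ for $\delta$ in a suitable range, and then show (Lemma 3.2) that $\|(V^{E})^\delta\|_{L^1(C)}\lesssim{\rm Cap}_{\alpha,s}(E)$ using \cite[Prop.\ 4.4]{AM}-type level-set estimates for nonlinear potentials. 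Your proposal replaces $V^E$ by $({\bf M}^{\rm loc}\chi_K)^\delta$, getting the $A_1^{\rm loc}$ membership for free from Coifman--Rochberg. That is essentially the Adams--Xiao route, and the paper remarks explicitly that one of the points of their approach is that it produces a new proof of \cite[Lemma 11]{AX2} in which the Orobitg--Verdera maximal-function bound on Choquet spaces \cite{OV} is avoided. Your construction, by contrast, depends on precisely that kind of result.

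The step you flag as the ``main obstacle'' is, as written, a genuine gap. After the Vitali extraction you are left with disjoint balls $B_{r_j}$, $r_j\le 1$, with $|B_{r_j}\cap K|>\lambda|B_{r_j}|$, and you need $\sum_j r_j^{\,n-\alpha s}\lesssim \lambda^{-\gamma}\,{\rm Cap}_{\alpha,s}(K)$ with $\gamma<\delta<1$. Disjointness only controls $\sum_j r_j^{\,n}\lesssim |K|/\lambda$, and since $r_j\le 1$ implies $r_j^{\,n-\alpha s}\ge r_j^{\,n}$, the quantity $\sum_j r_j^{\,n-\alpha s}$ is \emph{not} controlled by $\sum_j r_j^{\,n}$: with $N$ balls of a common radius $\epsilon$, $\sum r_j^n=N\epsilon^n$ stays bounded while $\sum r_j^{n-\alpha s}=N\epsilon^{n-\alpha s}$ blows up as $\epsilon\to 0$. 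No choice of $\delta$ near $(n-\alpha s)/n$ repairs this, because the failure is not in the $\lambda$-integration but in the ball-counting. The natural attempt, writing $r_j^{\,n-\alpha s}\lesssim\lambda^{-(1-\alpha s/n)}{\rm Cap}_{\alpha,s}(B_{r_j}\cap K)$ from \eqref{aslessn} and then summing, requires the quasi-additivity $\sum_j {\rm Cap}_{\alpha,s}(B_{r_j}\cap K)\lesssim{\rm Cap}_{\alpha,s}(K)$ for disjoint balls of bounded radius; this is false in general for Choquet capacities (again, many small balls inside a fixed ball produce a counterexample), and the correct substitute is the Whitney/dyadic quasi-additivity machinery that underlies the Orobitg--Verdera theorem and its capacitary analogues. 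If you wish to pursue your route you would need to import that theorem (or prove a Bessel-capacity version of it), rather than derive it from a Vitali covering; the borderline case $\alpha s=n$ requires additional care on top of this. By contrast, the paper's potential $V^E$ bypasses the maximal-function bound entirely: the key estimate $\|(V^E)^\delta\|_{L^1(C)}\lesssim{\rm Cap}_{\alpha,s}(E)$ follows from nonlinear-potential theory and the identity $\mu^E(\overline{E})={\rm Cap}_{\alpha,s}(E)$, not from a covering argument.
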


Here   $A^{\rm loc}_1$ is the class of local $A_1$ weights which consists of nonnegative locally integrable functions $w$ in $\mathbb{R}^n$ such that
\begin{equation}\label{Mloc}
{\bf M}^{\rm loc} w(x)\leq C w(x)
\end{equation}
for a.e. $x\in\mathbb{R}^n$. The $A^{\rm loc}_1$ characteristic constant of $w$, $[w]_{A^{\rm loc}_1}$, is defined as the least possible constant $C$ in the above inequality.
The operator ${\bf M}^{\rm loc}$ stands for the (center) local Hardy-Littlewood maximal function defined for each $f\in L^1_{\rm loc}(\mathbb{R}^n)$ by 
 \begin{equation}\label{localmaxdef}
 {\bf M}^{\rm loc} f (x)= \sup_{0<r\leq 1}   \frac{1}{|B_r(x)|} \int_{B_{r}(x)} |f(y)|dy.
\end{equation}
We recall that the (center) Hardy-Littlewood maximal function ${\bf M}f$ of $f$ is defined similarly except that the supremum  is now taken over all $r>0$. If  \eqref{Mloc} holds a.e. with ${\bf M}$ in place of  ${\bf M}^{\rm loc}$, then we say that $w$ belongs to the class $A_1$.   

One should relate Theorem \ref{firsttheorem} to  \cite[Theorem 2.2]{AX1} and \cite[Lemma 11]{AX2} in the context of (homogeneous) Morrey spaces.  Here we mention that our approach to Theorem \ref{firsttheorem} actually provides a new proof of  \cite[Lemma 11]{AX2} in which the result of \cite{OV} can be completely avoided.

Inspired by Theorem \ref{firsttheorem} we define the following space. For $q>1$ and $\alpha>0, s>1$,  let $N^{\alpha,s}_q=N^{\alpha,s}_q(\mathbb{R}^n)$
be the space of all measurable functions $g$ such that there exists a weight $w\in L^{1}(C)\cap A^{\rm loc}_1$ with $\|w\|_{L^{1}(C)}\leq 1$ and $[w]_{A^{\rm loc}_1}\leq {\bf \overline{c}}(n,\alpha,s)$ such 
that 
$$\left(\int_{\mathbb{R}^n}|g(x)|^{q}w(x)^{1-q}dx\right)^{1/q}<+\infty.$$

This implies that, for such $w$, $g=0$ a.e. on the set $\{w=0\}$.
The `norm' of a function $g\in N^{\alpha,s}_q$ is the defined as
\begin{align*}
\|g\|_{N^{\alpha,s}_q}=\inf_{w}\left(\int_{\mathbb{R}^n}|g(x)|^{q}w(x)^{1-q}dx\right)^{1/q},
\end{align*}
where the infimum is taken over all $w\in L^{1}(C)\cap A^{\rm loc}_1$ with $\|w\|_{L^{1}(C)}\leq 1$ and $[w]_{A^{\rm loc}_1}\leq {\bf \overline{c}}(n,\alpha,s)$.

Our first duality result can now be stated. 
\begin{theorem}\label{second} Let  $p>1$,  $\alpha>0, s>1$, with $\alpha s\leq n$,  and  $p'=p/(p-1)$. We have 
	$$\left(N^{\alpha,s}_{p'}\right)^{\ast} \approx M^{\alpha,s}_{p}$$ in the sense that each bounded linear functional $L\in \left(N^{\alpha,s}_{p'}\right)^{\ast}$ corresponds to a unique 
	$f\in M^{\alpha,s}_p$ such that $L(g)=L_f(g)$ for all $g\in N^{\alpha,s}_{p'}$, where 
	\begin{align*}
	L_{f}(g)=\int_{\mathbb{R}^n}f(x)g(x)dx,\qquad g\in N^{\alpha,s}_{p'}.
	\end{align*}

Moreover, 	we have 
	\begin{align*}
	\|f\|_{M^{\alpha,s}_{p}}\simeq\|L_{f}\|_{\left(N^{\alpha,s}_{p'}\right)^{\ast}}.
	\end{align*} 
\end{theorem}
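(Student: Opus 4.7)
The plan is to set up Theorem \ref{second} as a Köthe-style duality between two function spaces that are defined via sup/inf over the same family of admissible weights, with Theorem \ref{firsttheorem} supplying the bridge between $\|f\|_{M^{\alpha,s}_p}$ and its weighted expression. Throughout, let $\mathcal{W}$ denote the family of nonnegative $w \in L^1(C) \cap A_1^{\rm loc}$ with $\|w\|_{L^1(C)} \leq 1$ and $[w]_{A_1^{\rm loc}} \leq \overline{c}(n,\alpha,s)$.

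For the easy inclusion $M^{\alpha,s}_p \hookrightarrow (N^{\alpha,s}_{p'})^*$, given $f \in M^{\alpha,s}_p$ and $g \in N^{\alpha,s}_{p'}$, I would pick $w \in \mathcal{W}$ almost realizing the infimum that defines $\|g\|_{N^{\alpha,s}_{p'}}$; by construction $g = 0$ on $\{w = 0\}$. Hölder's inequality then gives
\begin{align*}
\left|\int fg \, dx\right| \leq \left(\int |f|^p w \, dx \right)^{1/p} \left(\int |g|^{p'} w^{1-p'} dx \right)^{1/p'},
\end{align*}
and Theorem \ref{firsttheorem} bounds the first factor by $\|f\|_{M^{\alpha,s}_p}$; letting $w$ approach the infimum in the second factor yields $|L_f(g)| \lesssim \|f\|_{M^{\alpha,s}_p} \|g\|_{N^{\alpha,s}_{p'}}$.

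For the hard inclusion $(N^{\alpha,s}_{p'})^* \hookrightarrow M^{\alpha,s}_p$, fix $L \in (N^{\alpha,s}_{p'})^*$. For each $w \in \mathcal{W}$, one has $\|g\|_{N^{\alpha,s}_{p'}} \leq \|g\|_{L^{p'}(w^{1-p'} dx)}$ whenever $g$ vanishes on $\{w = 0\}$, so $L$ restricts to a bounded linear functional of norm $\leq \|L\|$ on the subspace of $L^{p'}(w^{1-p'} dx)$ supported in $\{w > 0\}$. Invoking the classical Riesz representation in weighted $L^{p'}$ (whose dual under the Lebesgue pairing $\int gh\, dx$ is $L^p(w \, dx)$, since $p(p'-1) = p'$) produces $f_w \in L^p(w \, dx)$ supported in $\{w>0\}$ with $L(g) = \int f_w g\, dx$ and $\int |f_w|^p w \, dx \leq \|L\|^p$. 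A brief testing argument with characteristic-function signs on $\{w_1, w_2 > 0\}$ shows $f_{w_1} = f_{w_2}$ a.e.\ where both are defined, so assuming the sets $\{w > 0\}$, $w \in \mathcal{W}$, cover almost all of $\mathbb{R}^n$, the $f_w$'s stitch into a single $f$ with $\int |f|^p w \, dx \leq \|L\|^p$ for every $w \in \mathcal{W}$. Theorem \ref{firsttheorem} will then convert this to $\|f\|_{M^{\alpha,s}_p} \lesssim \|L\|$, and the identity $L = L_f$ will follow from the denseness of compactly-supported bounded functions in $N^{\alpha,s}_{p'}$ combined with the easy direction.

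The main obstacle I expect is producing a sufficiently rich family $\mathcal{W}$ whose positive-supports exhaust $\mathbb{R}^n$, since neither the $L^1(C)$ summability nor the $A_1^{\rm loc}$ constraint is preserved under naive additions or truncations. The standard remedy is to use Coifman--Rochberg type weights $w = \varepsilon ({\bf M}^{\rm loc} \chi_K)^\delta$, which are automatically in $A_1^{\rm loc}$ with a universal characteristic and whose $L^1(C)$ norm can be made small through capacitary weak-type estimates descending from Theorem \ref{MAD}. One must also verify that both the sup in Theorem \ref{firsttheorem} and the inf defining $N^{\alpha,s}_{p'}$ can be realized through the \emph{same} admissible class with the same characteristic bound $\overline{c}(n,\alpha,s)$, absorbing any discrepancy into the $\simeq$ equivalence constants.
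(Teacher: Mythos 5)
Your argument is broadly sound and reaches the same destination as the paper, but the route through the "hard" direction is different and, as you suspect, more fragile. The paper's proof sidesteps the patching and covering issues entirely by observing a single key fact: for any bounded set $E$ with positive capacity, the weight $(V^E)^\delta/{\rm Cap}_{\alpha,s}(E)$ from Lemma~\ref{L1CforVEt} is admissible (it lies in the class $\mathcal W$), so every $g \in L^{p'}$ with ${\rm supp}(g)\subset E$ belongs to $N^{\alpha,s}_{p'}$ with $\|g\|_{N^{\alpha,s}_{p'}}\leq C\,{\rm Cap}_{\alpha,s}(E)^{1/p}\|g\|_{L^{p'}}$. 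This gives $|L(g)| \leq C\|L\|\,{\rm Cap}_{\alpha,s}(E)^{1/p}\|g\|_{L^{p'}}$ for all compactly supported $L^{p'}$ functions, and then one invocation of (unweighted) Riesz representation yields $f \in L^p_{\rm loc}$ directly; testing with $g = {\rm sgn}(f)|f|^{p-1}\chi_K$ produces $\|f\|_{M^{\alpha,s}_p}\lesssim\|L\|$ without ever needing Theorem~\ref{firsttheorem} again. In contrast, you work in the weighted spaces $L^{p'}(w^{1-p'}dx)$, get one representative $f_w$ per weight, and then patch — a valid idea, but one that imports bookkeeping (consistency of $f_{w_1},f_{w_2}$ on overlaps, $\sigma$-finiteness of the weighted measures, exhaustion of $\mathbb R^n$ by $\{w>0\}$) that the paper avoids.

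On the specific worry you flagged: the covering problem is not real. The weights $V^E = G_\alpha * ((G_\alpha*\mu^E)^{1/(s-1)})$ are strictly positive everywhere whenever ${\rm Cap}_{\alpha,s}(E)>0$, since they are potentials of nonzero nonnegative measures and the Bessel kernel is everywhere positive. So no Coifman--Rochberg construction is needed — the admissible family you already have exhausts $\{w>0\}=\mathbb R^n$. The bigger gap in your write-up is the density/extension step at the end: you assert that $L=L_f$ follows "from the denseness of compactly-supported bounded functions in $N^{\alpha,s}_{p'}$ combined with the easy direction," but you do not justify that density, and it is not trivial. The paper handles it explicitly: for $g\in N^{\alpha,s}_{p'}$ one takes the truncations $g_k = \max\{\min\{g,k\},-k\}\chi_{B_k(0)}$, shows via Fatou and the already-proved bound $|\int fh\,dx|\lesssim\|f\|_{M}\|h\|_{N}$ that $fg\in L^1$, and then uses dominated convergence together with $g_k\to g$ in $N^{\alpha,s}_{p'}$. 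You would need to supply this (or an equivalent) to close the argument.
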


In Theorems \ref{firsttheorem} and \ref{second}, we can also drop the $A^{\rm loc}_1$  and the quasicontinuity conditions on the weights $w$ and obtain the following similar results with equality of norms.
\begin{theorem}\label{NoA1Th}
For $p>1$ and $\alpha>0, s>1$, with $\alpha s\leq n$, we have 
\begin{align*}
\|f\|_{M^{\alpha,s}_p} = \sup_{w}\left(\int_{{\RR}^{n}}|f(x)|^{p} w(x)dx\right)^{1/p},
\end{align*}
where the supremum is taken over all weights  $w$ such that $w$ is defined ${\rm Cap}_{\alpha,s}$-quasieverywhere and 
$\int_{\RR^n} w dC \leq 1$.

Moreover, we have  $\left(\widetilde{N}^{\alpha,s}_{p'}\right)^{\ast}=M^{\alpha,s}_p$, where  $\widetilde{N}^{\alpha,s}_q=\widetilde{N}^{\alpha,s}_q(\mathbb{R}^n)$, $q>1$,
is the space of all measurable functions $g$ such that 
\begin{align*}
\|g\|_{\widetilde{N}^{\alpha,s}_q}:=\inf_{w}\left(\int_{\mathbb{R}^n}|g(x)|^{q}w(x)^{1-q}dx\right)^{1/q}<+\infty,
\end{align*}
Here the infimum is taken over all nonnegative q.e. defined function $w\in L^1_{\rm loc}(\RR^n)$ such that  $\int_{\RR^n} w dC \leq 1$. 
\end{theorem}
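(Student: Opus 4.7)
My plan is to prove the supremum characterization first and then use it to obtain the duality.

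\textbf{Sup formula.} For the $\geq$ direction I test with the indicator weights $w_K := \chi_K/\text{Cap}_{\alpha,s}(K)$ for each compact $K$ of positive capacity. A direct layer-cake computation gives $\int w_K\,dC = \int_0^{1/\text{Cap}_{\alpha,s}(K)}\text{Cap}_{\alpha,s}(K)\,dt = 1$, so $w_K$ is admissible, while $\int|f|^p w_K\,dx = \text{Cap}_{\alpha,s}(K)^{-1}\int_K|f|^p\,dx$; taking the supremum over $K$ reproduces $\|f\|_{M^{\alpha,s}_p}$. For the $\leq$ direction I apply Fubini (layer cake) to rewrite $\int|f|^p w\,dx = \int_0^\infty \bigl(\int_{\{w>t\}}|f|^p\,dx\bigr)\,dt$. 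Inner regularity of the measure $|f|^p\,dx$ combined with the monotonicity of the outer capacity upgrades the defining bound $\int_K|f|^p\,dx \leq \|f\|_{M^{\alpha,s}_p}^p\,\text{Cap}_{\alpha,s}(K)$ from compact $K$ to arbitrary measurable sets; integrating in $t$ then yields $\int|f|^p w\,dx \leq \|f\|_{M^{\alpha,s}_p}^p\int w\,dC \leq \|f\|_{M^{\alpha,s}_p}^p$.

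\textbf{Duality, easy direction.} For any admissible $w$, the splitting $|fg| = (|f|w^{1/p})(|g|w^{-1/p})$ and H\"older give $|L_f(g)| \leq (\int|f|^p w\,dx)^{1/p}(\int|g|^{p'}w^{1-p'}\,dx)^{1/p'}$; the first part of the theorem bounds the first factor uniformly in $w$ by $\|f\|_{M^{\alpha,s}_p}$, and taking the infimum over $w$ in the second factor yields $\|L_f\|\leq\|f\|_{M^{\alpha,s}_p}$.

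\textbf{Duality, reverse direction.} Given $L\in(\widetilde{N}^{\alpha,s}_{p'})^*$, for every admissible $w$ the inclusion $L^{p'}(w^{1-p'}\,dx)\hookrightarrow\widetilde{N}^{\alpha,s}_{p'}$ has operator norm $\leq 1$ directly from the infimum definition of $\widetilde{N}^{\alpha,s}_{p'}$, so $L|_{L^{p'}(w^{1-p'}dx)}$ has norm $\leq\|L\|$. Classical Riesz duality produces $F_w\in L^p(w^{1-p'}\,dx)$ with $\|F_w\|_{L^p(w^{1-p'}dx)}\leq\|L\|$; setting $f_w := F_w w^{1-p'}$ and using the exponent identity $pp'=p+p'$, one gets $L(g) = \int gf_w\,dx$ on this subspace and $\int|f_w|^p w\,dx = \|F_w\|^p_{L^p(w^{1-p'}dx)}\leq\|L\|^p$. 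Specializing to $w_R := \chi_{B_R}/\text{Cap}_{\alpha,s}(B_R)$, the uniqueness in the $L^{p'}$-Riesz representation forces $f_{w_{R'}}|_{B_R}=f_{w_R}$ for $R'>R$, so the $f_{w_R}$ glue to a single $f\in L^p_{\rm loc}(\RR^n)$ representing $L$ on compactly supported $L^{p'}$ functions. Rerunning the same construction with $w = w_K$ for an arbitrary compact $K$ (and identifying the resulting representer with our $f$ by uniqueness on $L^{p'}(B_R)\supset L^{p'}(K)$) yields $\int_K|f|^p\,dx\leq\|L\|^p\text{Cap}_{\alpha,s}(K)$, so the defining supremum gives $\|f\|_{M^{\alpha,s}_p}\leq\|L\|$. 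Finally, for a general $g\in\widetilde{N}^{\alpha,s}_{p'}$ and any admissible $w$ with $\int|g|^{p'}w^{1-p'}\,dx<\infty$, the truncations $g_R := g\chi_{B_R}$ converge to $g$ in $\widetilde{N}^{\alpha,s}_{p'}$ by dominated convergence in the defining weighted integral, while H\"older together with $f\in L^p(w\,dx)$ (guaranteed by the $\leq$ direction of Part 1 once $\|f\|_M\leq\|L\|$) gives $\int fg_R\,dx\to\int fg\,dx$; hence $L(g)=\int fg\,dx$.

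\textbf{Main difficulty.} The subtlest step is upgrading the representation from compactly supported $L^{p'}$ test functions to arbitrary $g\in\widetilde{N}^{\alpha,s}_{p'}$. Part 1 is exactly what enables this: it supplies the integrability $f\in L^p(w\,dx)$ for every admissible $w$, which is precisely what H\"older needs to close the truncation argument globally and recover the representation on the whole space.
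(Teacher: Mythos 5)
Your overall architecture matches the paper's (which simply says ``follow the proofs of Theorems \ref{firsttheorem} and \ref{second} replacing $(V^E)^\delta$ with $\chi_E$''): the indicator-weight lower bound, the layer-cake upper bound upgraded to measurable sets by inner regularity, H\"older for the easy duality direction, and Riesz representation on compactly supported $L^{p'}$ test functions for the reverse. Routing the Riesz step through weighted $L^{p'}(w^{1-p'}dx)$ duality rather than plain $L^{p'}(E)$ duality is a harmless cosmetic variant, since for $w=\chi_E/{\rm Cap}_{\alpha,s}(E)$ the two spaces coincide up to a scalar.

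There is a genuine gap in your final truncation step. You pass from the representation $L(\varphi)=\int f\varphi\,dx$ on compactly supported $L^{p'}$ functions $\varphi$ to all of $\widetilde{N}^{\alpha,s}_{p'}$ by truncating $g_R:=g\chi_{B_R}$, and then assert $L(g_R)=\int fg_R\,dx$. But $g_R$ need not belong to $L^{p'}$: the paper explicitly remarks (after the proof of Theorem \ref{NoA1Th}) that elements of $\widetilde{N}^{\alpha,s}_{p'}$ in general do \emph{not} lie in $L^{p'}_{\rm loc}$, giving the concrete example $g(x)=|x|^{-n+1}$ near the origin with $p'=2$. So the truncation you use fails to land in the class on which $f$ is known to represent $L$, and the chain $L(g)=\lim L(g_R)=\lim\int fg_R\,dx=\int fg\,dx$ breaks at the middle equality. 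The fix is exactly what the paper does in the proof of Theorem \ref{second}: truncate in level as well as in space, $g_k:=\max\{\min\{g,k\},-k\}\chi_{B_k(0)}$. These are bounded with compact support, hence in $L^{p'}$, so $L(g_k)=\int fg_k\,dx$ is licit; meanwhile $g_k\to g$ in $\widetilde{N}^{\alpha,s}_{p'}$ by dominated convergence (dominated by $|g|^{p'}w^{1-p'}$) and $\int fg_k\,dx\to\int fg\,dx$ by dominated convergence (dominated by $|fg|\in L^1$ via the H\"older bound you already have). Alternatively you could close your own route by first showing, via testing with bounded compactly supported functions restricted to $\{w>\epsilon\}$, that $f_w=f$ a.e.\ on $\{w>0\}$ for every admissible $w$, which would legitimize $L(g_R)=\int f_w g_R\,dx=\int fg_R\,dx$; but as written that identification is only established for $w=w_K$.
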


The spaces $N^{\alpha,s}_q$ and $\widetilde{N}^{\alpha,s}_q$ are obviously quasinormed spaces. However, at this point it is not clear if they are normable or complete for all $\alpha>0, s>1$ with $\alpha s\leq n$ and $q>1$.
 We now introduce two  Banach spaces which are also preduals of $M^{\alpha,s}_p$.   The first one is of course the K\"othe dual space $(M^{\alpha,s}_p)'$ defined  earlier in \eqref{KothDef}. The second one is a block type space in the spirit of \cite{BRV}, which we call 
$B^{\alpha,s}_q$, $q>1$.


\begin{definition} Let $q>1$, $\alpha>0$, and $s>1$. We define  $B^{\alpha,s}_q=B^{\alpha,s}_q(\RR^n)$ to be the space of all functions $f$ of the form 
	\begin{align*}
	f=\sum_{j}c_{j}a_{j},
	\end{align*}
	where the convergence is  in pointwise a.e. sense. Here $\{c_{j}\}\in l^{1}$ and each $a_j\in L^{q}(\RR^n)$ is such that there exists a bounded set $A_j\subset\RR^n$
	for which $a_{j}=0$ a.e. in $\RR^n\setminus A_j$ and  $	\|a_{j}\|_{L^{q}}\leq \emph{Cap}_{\alpha,s}(A_{j})^{\frac{1-q}{q}}$.
	 The norm of a function $f\in B^{\alpha,s}_q$ is defined as 
	\begin{align*}
	\|f\|_{B^{\alpha,s}_q}=\inf\Big\{\sum_{j}|c_{j}|:  f=\sum_{j}c_{j}a_{j} {\rm ~a.e.}\Big\}.
	\end{align*} 	
\end{definition}

It is now easy to see  from the definition that  $B^{\alpha,s}_{q}$ is a Banach space. Both $(M^{\alpha,s}_p)'$ and $B^{\alpha,s}_{p'}$ are also  preduals of $M^{\alpha,s}_p$.

\begin{theorem}\label{BspaceTh} Let $p>1$, $\alpha>0$, and $s>1$.
	We have   $$[(M^{\alpha,s}_p)']^*=\left(B^{\alpha,s}_{p'}\right)^{*}= M^{\alpha,s}_{p},$$ 
	with equalities of norms.
\end{theorem}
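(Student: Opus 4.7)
The plan is to establish the two identifications $[(M^{\alpha,s}_p)']^* = M^{\alpha,s}_p$ and $(B^{\alpha,s}_{p'})^* = M^{\alpha,s}_p$ independently, both with equality of norms. The first of these follows immediately from Proposition~\ref{M'*}, which encodes that the $p$-convex Banach function space $M^{\alpha,s}_p$ coincides with the Köthe bidual of itself; no further argument is required beyond invoking that result.

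The substantive content is the block-space duality $(B^{\alpha,s}_{p'})^* = M^{\alpha,s}_p$, which I would prove through two complementary inequalities. \emph{Forward direction:} given $f \in M^{\alpha,s}_p$, set $L_f(g) = \int fg\,dx$. For any atom $a$ supported in a bounded set $A$ with $\|a\|_{L^{p'}} \leq {\rm Cap}_{\alpha,s}(A)^{(1-p')/p'} = {\rm Cap}_{\alpha,s}(A)^{-1/p}$, Hölder's inequality together with the definition of $\|f\|_{M^{\alpha,s}_p}$ yields
\begin{align*}
\left|\int f\, a\, dx\right| \leq \|f\|_{L^p(A)}\,\|a\|_{L^{p'}} \leq \|f\|_{M^{\alpha,s}_p} {\rm Cap}_{\alpha,s}(A)^{1/p}\cdot {\rm Cap}_{\alpha,s}(A)^{-1/p} = \|f\|_{M^{\alpha,s}_p}.
\end{align*}
For $g=\sum c_j a_j$ with $\sum |c_j|<\infty$, Tonelli's theorem shows that $\sum c_j f a_j$ converges absolutely in $L^1(\RR^n)$ to $fg$, whence $|L_f(g)| \leq \|f\|_{M^{\alpha,s}_p}\sum|c_j|$. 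Taking the infimum over atomic decompositions gives $\|L_f\|_{(B^{\alpha,s}_{p'})^*} \leq \|f\|_{M^{\alpha,s}_p}$.

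\emph{Reverse direction:} given $L \in (B^{\alpha,s}_{p'})^*$, observe that for any bounded $A$ with ${\rm Cap}_{\alpha,s}(A)>0$, every $g \in L^{p'}$ supported in $A$ is a scalar multiple of an atom, so $\|g\|_{B^{\alpha,s}_{p'}} \leq \|g\|_{L^{p'}} {\rm Cap}_{\alpha,s}(A)^{1/p}$; consequently $L|_{L^{p'}(A)}$ is a bounded linear functional of norm at most $\|L\|{\rm Cap}_{\alpha,s}(A)^{1/p}$. The Riesz representation theorem then produces a unique $f_A \in L^p(A)$ realizing $L$ on $L^{p'}(A)$ with $\|f_A\|_{L^p(A)} \leq \|L\|{\rm Cap}_{\alpha,s}(A)^{1/p}$. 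Exhausting $\RR^n$ by balls $B_N(0)$ and using uniqueness of the Riesz representative to patch the $f_{B_N}$ into a consistent $f \in L^p_{\rm loc}(\RR^n)$, one applies the bound to each compact $K$ of positive capacity to conclude
\begin{align*}
\|f\|_{M^{\alpha,s}_p} = \sup_K \left(\frac{\int_K |f|^p\,dx}{{\rm Cap}_{\alpha,s}(K)}\right)^{1/p} \leq \|L\|.
\end{align*}

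Finally, the identity $L = L_f$ extends from finite sums of atoms to all of $B^{\alpha,s}_{p'}$ by density: for $g = \sum c_j a_j$, the partial sums $g_N$ satisfy $\|g - g_N\|_{B^{\alpha,s}_{p'}} \leq \sum_{j>N}|c_j| \to 0$, and both $L$ and $L_f$ are continuous on $B^{\alpha,s}_{p'}$. Combining the two norm inequalities yields $\|L_f\|_{(B^{\alpha,s}_{p'})^*} = \|f\|_{M^{\alpha,s}_p}$, as required. The main technical obstacle is the patching of the Riesz representatives across the exhaustion; once uniqueness delivers $f_A = f_B$ a.e.\ on $A \cap B$, the assembly is routine. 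A minor auxiliary point is that atoms supported in zero-capacity sets contribute nothing, since such sets have zero Lebesgue measure (or, when $\alpha s > n$, reduce to the empty set).
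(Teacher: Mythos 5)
Your argument is correct and follows essentially the same route as the paper: the first equality is delegated to Proposition~\ref{M'*}, the forward inequality comes from H\"older on each atom plus summation, the reverse direction produces $f\in L^p_{\rm loc}$ via Riesz representation on compactly supported $L^{p'}$ functions and reads off the $M^{\alpha,s}_p$ bound from testing against compact sets, and the identity $L=L_f$ extends to all of $B^{\alpha,s}_{p'}$ by convergence of the partial sums $g_N\to g$ in $B^{\alpha,s}_{p'}$. The only cosmetic difference is that you close the argument by invoking continuity of $L_f$ directly, while the paper spells out the Fatou/dominated-convergence steps establishing $fg\in L^1$ and passing to the limit; both hinge on the same estimate.
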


Having introduced several predual spaces to $M^{\alpha,s}_{p}$, a natural question to us now is whether they are isometrically isomorphic or at least isomorphic. We will show eventually that they are all indeed isomorphic. 
In the case  the capacity ${\rm Cap}_{\alpha, s}$ is strongly subadditive we claim that 
\begin{equation}\label{isometric}
N^{\alpha, s}_{p'} \approx (M^{\alpha, s}_p)' = \widetilde{N}^{\alpha, s}_{p'} =  B^{\alpha, s}_{p'},
\end{equation}
provided  ${\rm Cap}_{\alpha, s}$ is strongly subadditive. 

The first relation in \eqref{isometric} provides us with a new concrete description for the abstract space  
$(M^{\alpha, s}_p)'$ and enables us `to do harmonic analysis' on it when ${\rm Cap}_{\alpha, s}$ is strongly subadditive. In order to deal with all capacities,
we now introduce another space which we call   $\mathcal{N}^{\alpha, s}_{q}$, $q>1$. Eventually, we show that 
$\mathcal{N}_{p'}^{\alpha,s}  \approx (M_{p}^{\alpha,s})'$ for all  $p>1$ and $\alpha s\leq n$. To this end, we first modify the space  
$L^1(C)$, which  in general is only a quasinormed space. 
Let $\mathcal{L}^{1}(C)$ be the space of measurable functions $w$ such that 
$$\sup_{g}\int |g(x)| |w(x)| dx <+\infty, $$ 
where the supremum is taken over all $g\in M^{\alpha,s}_{1}$ such that  $\norm{g}_{M^{\alpha,s}_{1}}\leq 1$. In other words, $\mathcal{L}^{1}(C)$
is the K\"othe dual of $M^{\alpha,s}_{1}$ with the norm $\|w\|_{\mathcal{L}^{1}(C)}$ being  defined as the above supremum. It is easy to see that    $L^1(C) \hookrightarrow \mathcal{L}^1(C)$.

For $q>1$, we now   define $\mathcal{N}^{\alpha,s}_q=\mathcal{N}^{\alpha,s}_q(\mathbb{R}^n)$
as the space of all measurable functions $g$ such that there exists a weight $w\in \mathcal{L}^{1}(C)\cap A^{\rm loc}_1$ with $\|w\|_{\mathcal{L}^{1}(C)}\leq 1$ and $[w]_{A^{\rm loc}_1}\leq {\bf \overline{c}}(n,\alpha,s)$ such 
that 
$$\left(\int_{\mathbb{R}^n}|g(x)|^{q}w(x)^{1-q}dx\right)^{1/q}<+\infty.$$

As in the case of $N^{\alpha,s}_q$, the norm of a function $g\in \mathcal{N}^{\alpha,s}_q$ is the defined as the infimum of the left-hand side above
over all $w\in \mathcal{L}^{1}(C)\cap A^{\rm loc}_1$ with $\|w\|_{\mathcal{L}^{1}(C)}\leq 1$ and $[w]_{A^{\rm loc}_1}\leq {\bf \overline{c}}(n,\alpha,s)$.

\begin{theorem}\label{isomorphism} Let $p>1$, $\alpha>0$, $s>1$, $\alpha s\leq n$. Then $\mathcal{N}^{\alpha, s}_{p'}$  and $(M^{\alpha, s}_p)'$ are Banach function spaces (see Sub-section \ref{BFS}), and $\mathcal{N}^{\alpha, s}_{p'} \approx (M^{\alpha, s}_p)'$ (thus $(\mathcal{N}^{\alpha, s}_{p'})^* \approx M_p^{\alpha,s}$).	Moreover, if 
 ${\rm Cap}_{\alpha, s}$ is strongly subadditive then  $N^{\alpha, s}_{p'}$,  $\widetilde{N}^{\alpha, s}_{p'}$, and $B^{\alpha, s}_{p'}$ are also Banach function spaces, and
\begin{equation*}
\mathcal{N}^{\alpha, s}_{p'} \approx  N^{\alpha, s}_{p'} \approx (M^{\alpha, s}_p)' = \widetilde{N}^{\alpha, s}_{p'} =  B^{\alpha, s}_{p'}.
\end{equation*}
\end{theorem}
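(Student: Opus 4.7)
My plan is first to establish the isomorphism $\mathcal N^{\alpha,s}_{p'}\approx (M^{\alpha,s}_p)'$, from which $\mathcal N^{\alpha,s}_{p'}$ inherits the Banach function space structure and the dual $(\mathcal N^{\alpha,s}_{p'})^*\approx M^{\alpha,s}_p$ follows via Proposition \ref{M'*}, and then, under the extra hypothesis \eqref{StrongS}, to upgrade this to the displayed chain of identifications. The ``easy'' inclusion $\|g\|_{(M^{\alpha,s}_p)'}\leq \|g\|_{\mathcal N^{\alpha,s}_{p'}}$ is a one-line H\"older estimate: for $g\in \mathcal N^{\alpha,s}_{p'}$, $f\in M^{\alpha,s}_p$ and any admissible $w$,
\begin{align*}
\int|fg|\,dx\leq\Big(\int|f|^p w\,dx\Big)^{1/p}\Big(\int|g|^{p'}w^{1-p'}dx\Big)^{1/p'},
\end{align*}
and the first factor is $\leq\|f\|_{M^{\alpha,s}_p}$ because $|f|^p\in M^{\alpha,s}_1$ with norm $\|f\|^p_{M^{\alpha,s}_p}$, so that the K\"othe pairing between $M^{\alpha,s}_1$ and $\mathcal L^1(C)$ absorbs $\|w\|_{\mathcal L^1(C)}\leq 1$. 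Taking the infimum in $w$ closes this direction. The same argument, using Theorem \ref{NoA1Th} in place of the abstract pairing, gives $\widetilde N^{\alpha,s}_{p'}\hookrightarrow (M^{\alpha,s}_p)'$ with constant $1$; testing against a single atom yields $B^{\alpha,s}_{p'}\hookrightarrow (M^{\alpha,s}_p)'$ with constant $1$.

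The reverse inclusion $(M^{\alpha,s}_p)'\hookrightarrow \mathcal N^{\alpha,s}_{p'}$ is the crux. Given $g\in (M^{\alpha,s}_p)'$, I have to exhibit a single weight $w\in\mathcal L^1(C)\cap A^{\rm loc}_1$ with $\|w\|_{\mathcal L^1(C)}\leq 1$, $[w]_{A^{\rm loc}_1}\leq \overline{\mathbf c}(n,\alpha,s)$, and $\int|g|^{p'}w^{1-p'}dx\lesssim\|g\|^{p'}_{(M^{\alpha,s}_p)'}$. My strategy is a Rubio de Francia type iteration adapted to $\mathbf M^{\rm loc}$: starting from a suitable non-negative $h$ associated with $g$ (a near-extremiser for the weighted $L^p$--$L^{p'}$ duality applied to $g$), set
\begin{align*}
w:=\sum_{k=0}^\infty \frac{(\mathbf M^{\rm loc})^k h}{(2\kappa)^k},\qquad \kappa:=\|\mathbf M^{\rm loc}\|_{\mathcal L^1(C)\to \mathcal L^1(C)}.
\end{align*}
By construction $\mathbf M^{\rm loc}w\leq 2\kappa\,w$ (hence $w\in A^{\rm loc}_1$), $w\geq h$, and $\|w\|_{\mathcal L^1(C)}\lesssim\|h\|_{\mathcal L^1(C)}$, which are precisely the three properties required to close the estimate. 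The non-trivial input is the boundedness of $\mathbf M^{\rm loc}$ on the abstract K\"othe dual $\mathcal L^1(C)=(M^{\alpha,s}_1)'$, which I plan to derive by duality from its boundedness on $M^{\alpha,s}_1$ itself (a local version of the Maz'ya--Verbitsky estimate cited after \eqref{Mloc}, cf.~\cite{MS1}); this is where the main difficulty lies.

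Finally, assume \eqref{StrongS}. By Choquet's theorem $L^1(C)$ is a Banach space, and the classical K\"othe duality for strongly sub-additive Choquet integrals identifies $\mathcal L^1(C)=L^1(C)$ with equality of norms; hence $N^{\alpha,s}_{p'}=\mathcal N^{\alpha,s}_{p'}$ by definition, and the previous step yields $N^{\alpha,s}_{p'}\approx (M^{\alpha,s}_p)'$. Under \eqref{StrongS} the defining infima for $\widetilde N^{\alpha,s}_{p'}$ and $B^{\alpha,s}_{p'}$ satisfy the triangle inequality --- sub-additivity of the Choquet integral being the key point --- so both become genuine Banach function spaces; the H\"older/atom argument above gives the inclusions $\widetilde N^{\alpha,s}_{p'}, B^{\alpha,s}_{p'}\hookrightarrow (M^{\alpha,s}_p)'$ with constant $1$, while Theorems \ref{NoA1Th} and \ref{BspaceTh} identify $M^{\alpha,s}_p$ as the Banach dual of each with \emph{equality} of norms. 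The Fatou property, combined with the standard fact that the K\"othe bidual of a saturated BFS with the Fatou property returns the space itself, then forces $\widetilde N^{\alpha,s}_{p'}=B^{\alpha,s}_{p'}=(M^{\alpha,s}_p)'$ isometrically.
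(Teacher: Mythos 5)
Your proposal contains several genuine gaps, and the overall route differs substantially from the paper's.

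The crux of your plan---the reverse inclusion $(M^{\alpha,s}_p)'\hookrightarrow\mathcal N^{\alpha,s}_{p'}$ via a Rubio de Francia iteration---hinges on the boundedness of $\mathbf M^{\rm loc}$ on $\mathcal L^1(C)=(M^{\alpha,s}_1)'$, which you say you will ``derive by duality from its boundedness on $M^{\alpha,s}_1$.'' But $\mathbf M^{\rm loc}$ is not linear, so there is no adjoint to dualize; the statement ``bounded on $X$ $\Rightarrow$ bounded on $X'$'' is false for sublinear operators, and the paper explicitly illustrates the failure in this very setting (the global $\mathbf M$ is bounded on $M^{\alpha,s}_p$ yet unbounded on $(M^{\alpha,s}_p)'$). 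To salvage the idea you would need a Fefferman--Stein type pointwise/integral self-duality of the maximal operator, plus boundedness of $\mathbf M^{\rm loc}$ on $M^{\alpha,s}_1$ (which the paper only cites for $p>1$), and neither is supplied. Moreover the ``near-extremiser $h$ for the weighted $L^p$--$L^{p'}$ duality applied to $g$'' is left entirely unspecified; producing it seems to presuppose exactly the representation $\mathcal N^{\alpha,s}_{p'}\approx(M^{\alpha,s}_p)'$ you are trying to prove. In the paper this entire construction is avoided: one shows $(\mathcal N^{\alpha,s}_{p'})'\approx M^{\alpha,s}_p$ directly using the explicit capacitary-potential weights $(V^E)^\delta$ of Lemma \ref{L1CforVEt}, proves separately that $\mathcal N^{\alpha,s}_{p'}$ is a Banach function space (Theorem \ref{NisBFS}), and then invokes the Lorentz--Luxemburg theorem $X''=X$ to deduce $\mathcal N^{\alpha,s}_{p'}\approx(M^{\alpha,s}_p)'$. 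No Rubio de Francia weight needs to be exhibited for an arbitrary $g$.

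Two further points. First, you verify that $\mathcal N^{\alpha,s}_{p'}$ and the other spaces are Banach function spaces only by a vague appeal to ``the Fatou property'' being satisfied, but establishing (P2) is the hard part: the paper needs a block-decomposition reformulation of the norm (to get the triangle inequality) together with Koml\'os' theorem to pass to the limit weight, and for $N^{\alpha,s}_{q}$ it needs the additional input that $\widetilde N^{\alpha,s}_{q}$ has absolutely continuous norm (via Theorem \ref{second} and Theorem \ref{X*X'}). Second, your claim that under strong subadditivity ``$\mathcal L^1(C)=L^1(C)$ with equality of norms; hence $N^{\alpha,s}_{p'}=\mathcal N^{\alpha,s}_{p'}$ by definition'' is not quite right: even when the two norms coincide on $L^1(C)$, the class of admissible weights differs because $L^1(C)$ is restricted to quasicontinuous functions while $\mathcal L^1(C)$ is not, so the two spaces $N^{\alpha,s}_{p'}$ and $\mathcal N^{\alpha,s}_{p'}$ need not coincide as sets with the same norm; accordingly the paper only asserts $N^{\alpha,s}_{p'}\approx(M^{\alpha,s}_p)'$ (isomorphism), while it does get isometric equality $\widetilde N^{\alpha,s}_{p'}=B^{\alpha,s}_{p'}=(M^{\alpha,s}_p)'$ from Theorems \ref{NoA1Th}, \ref{BspaceTh}, \ref{XX''} and Lemma \ref{NNtilB}.
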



Finally, we have the following  isomorphism result which applies to all capacities.

\begin{theorem}\label{isomorphism2} Let $p>1$, $\alpha>0$, $s>1$, $\alpha s\leq n$. We have 
	\begin{equation}\label{chain}
	\mathcal{N}^{\alpha, s}_{p'} \approx  N^{\alpha, s}_{p'} \approx (M^{\alpha, s}_p)' \approx \widetilde{N}^{\alpha, s}_{p'} \approx  B^{\alpha, s}_{p'}.
	\end{equation}
\end{theorem}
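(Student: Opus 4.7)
The plan is to take as anchor the isomorphism $\mathcal{N}^{\alpha,s}_{p'}\approx (M^{\alpha,s}_p)'$ furnished by Theorem \ref{isomorphism}, which is the only link of the chain proved there without assuming strong subadditivity, and to establish $N^{\alpha,s}_{p'}\approx \widetilde{N}^{\alpha,s}_{p'}\approx B^{\alpha,s}_{p'}\approx (M^{\alpha,s}_p)'$ in the general setting. First I would verify the forward embeddings $N^{\alpha,s}_{p'}, \widetilde{N}^{\alpha,s}_{p'}, B^{\alpha,s}_{p'}\hookrightarrow (M^{\alpha,s}_p)'$. For $N^{\alpha,s}_{p'}$ and $\widetilde{N}^{\alpha,s}_{p'}$ this follows from H\"older's inequality combined with the weighted-supremum characterization of $\|f\|_{M^{\alpha,s}_p}$ in Theorems \ref{firsttheorem} and \ref{NoA1Th}:
\[
\int|fg|\,dx\leq \Bigl(\int|f|^p w\,dx\Bigr)^{1/p}\Bigl(\int|g|^{p'}w^{1-p'}\,dx\Bigr)^{1/p'}\lesssim \|f\|_{M^{\alpha,s}_p}\,\|g\|_{\widetilde N^{\alpha,s}_{p'}}
\]
for any admissible $w$, and the same argument handles $N^{\alpha,s}_{p'}$. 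For $B^{\alpha,s}_{p'}$, test on atoms: an atom $a_j$ supported on $A_j$ with $\|a_j\|_{L^{p'}}\leq \text{Cap}_{\alpha,s}(A_j)^{-1/p}$ satisfies $|\int fa_j\,dx|\leq \|f\|_{L^p(A_j)}\|a_j\|_{L^{p'}}\leq \|f\|_{M^{\alpha,s}_p}$, whence $\|f\|_{(M^{\alpha,s}_p)'}\leq \|f\|_{B^{\alpha,s}_{p'}}$.

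For the reverse embeddings, let $g\in(M^{\alpha,s}_p)'\approx\mathcal{N}^{\alpha,s}_{p'}$ be given, together with the associated weight $w_0\in\mathcal{L}^1(C)\cap A_1^{\rm loc}$ satisfying $\|w_0\|_{\mathcal L^1(C)}\leq 1$, $[w_0]_{A_1^{\rm loc}}\leq\bar c$, and $\int|g|^{p'}w_0^{1-p'}\,dx\lesssim\|g\|_{(M^{\alpha,s}_p)'}^{p'}$. To produce a weight admissible for $N^{\alpha,s}_{p'}$, I would run the Rubio de Francia iteration for $\mathbf{M}^{\rm loc}$ starting from an $L^1(C)$-majorant of $w_0$, using the boundedness of $\mathbf{M}^{\rm loc}$ on $L^1(C)$ (the Orobitg--Verdera phenomenon adapted to Bessel capacity) to manufacture a weight in $L^1(C)\cap A_1^{\rm loc}$ that pointwise dominates $w_0$; replacement by this larger weight only decreases $\int|g|^{p'}(\,\cdot\,)^{1-p'}\,dx$. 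The weight for $\widetilde N^{\alpha,s}_{p'}$ is produced by the same procedure without enforcing $A_1^{\rm loc}$. For $B^{\alpha,s}_{p'}$, perform an atomic decomposition of $g$ along its level sets $E_k=\{|g|>2^k\}$: setting $g_k=g\chi_{E_k\setminus E_{k+1}}$ and $a_k=g_k/c_k$ with $c_k=\|g_k\|_{L^{p'}}\,\text{Cap}_{\alpha,s}(E_k)^{1/p}$ yields atoms of the required form, and the bound $\sum_k c_k\lesssim\|g\|_{(M^{\alpha,s}_p)'}$ would be extracted from the $\mathcal{N}^{\alpha,s}_{p'}$ representation of $g$ through the weighted integral $\int|g|^{p'}w_0^{1-p'}\,dx$.

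The principal obstacle is precisely the reverse step in the absence of strong subadditivity: $L^1(C)$ fails to be a Banach space, and the weight $w_0$ from Theorem \ref{isomorphism} lies a priori only in $\mathcal L^1(C)$, which in general strictly contains $L^1(C)$. The Rubio de Francia iteration is the key device to bridge these spaces, but one must verify the boundedness of $\mathbf{M}^{\rm loc}$ on $L^1(C)$ for Bessel capacity and select a seed weight — for instance a lower semicontinuous majorant built from $g$ itself, or a carefully truncated modification of $w_0$ — so that the iteration remains inside $L^1(C)$. A parallel difficulty for the atomic decomposition is to control $\sum_k c_k$ by the K\"othe-dual norm, which requires a capacitary Chebyshev-type estimate on the level sets $E_k$ that avoids using convexity of $\int\,\cdot\,dC$, and this is where the weight $w_0$ again enters as an intermediate quantity to absorb the missing strong subadditivity.
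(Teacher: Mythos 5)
Your proposal is a genuinely different route, and it contains a gap that you in fact come close to naming yourself. The paper's proof hinges on a single structural fact which your outline does not supply: the space $L^1(C)$ is \emph{always normable}, even when $\mathrm{Cap}_{\alpha,s}$ is not strongly subadditive. This is established by introducing the functional
\[
\gamma_{\alpha,s}(u)=\inf\Big\{\textstyle\int f^s\,dx:\ 0\le f\in L^s,\ G_\alpha*f\ge |u|^{1/s}\ \text{q.e.}\Big\},
\]
observing that it is subadditive (the set $\{g\ge0:\ g^{1/s}\le G_\alpha*f,\ \|f\|_{L^s}\le1\}$ is convex because $h\mapsto (G_\alpha*|h|^{1/s})^s$ is superadditive by reverse Minkowski), and then invoking \cite[Prop.\ 7.4.1]{AH} for the equivalence $\gamma_{\alpha,s}(u)\simeq\int|u|\,dC$. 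Once $L^1(C)$ carries an equivalent norm, the block-norm identities $\|f\|_{N^{\alpha,s}_q}\simeq\inf\sum|c_j|$ and $\|f\|_{\widetilde N^{\alpha,s}_q}\simeq\inf\sum|c_j|$ hold with constants, and the argument of Lemma~\ref{equivstrongsub} runs verbatim to give $N\approx\widetilde N\approx B$; combined with Theorem~\ref{NMprime} (which needs no subadditivity) the chain closes.

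Your Rubio de Francia detour cannot substitute for this. First, you need boundedness of $\mathbf{M}^{\rm loc}$ on $L^1(C)$ for Bessel capacity; the paper not only never establishes this, it explicitly remarks after Theorem~\ref{firsttheorem} that its approach \emph{avoids} the Orobitg--Verdera result, so this is a nontrivial external input. Second, and more fundamentally, even if that maximal bound were available, summing the Rubio de Francia series $\sum_k (\mathbf{M}^{\rm loc})^k w_0/(2A)^k$ inside $L^1(C)$ presupposes a countable triangle inequality in $L^1(C)$ — that is, exactly the normability that your argument is trying to avoid. So the iteration is circular: it needs the paper's key lemma as a prerequisite rather than being an alternative to it. Third, your seed is drawn from $\mathcal L^1(C)$, and no mechanism is given for passing from a $\mathcal L^1(C)$-weight to an $L^1(C)$-majorant of comparable norm without already knowing the two spaces coincide (which, in general, is false without strong subadditivity, as the paper notes only under SSA does $\|\cdot\|_{L^1(C)}=\|\cdot\|_{\mathcal L^1(C)}$).

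The atomic decomposition step for $B^{\alpha,s}_{p'}$ is also misaligned with what works. You dyadically decompose along level sets of $g$; the paper decomposes along level sets of the \emph{weight} $w$ (intersected with annuli $D_l$ to make supports bounded and to invoke the quasiadditivity \eqref{QAC}). Level sets of $g$ give atoms of the right shape, but they need not have bounded support, and there is no visible way to bound $\sum_k\|g_k\|_{L^{p'}}\mathrm{Cap}_{\alpha,s}(E_k)^{1/p}$ by the K\"othe-dual norm without a weight carrying $\sum_k 2^k\mathrm{Cap}_{\alpha,s}(E_k)\lesssim \int w\,dC\le1$, which is precisely the role of decomposing by $w$ rather than by $g$. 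Your forward embeddings, on the other hand, are correct and match the paper's.
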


In general, the space of continuous functions with compact support  $C_c$ is not dense in $M^{\alpha,s}_p$. We shall let 
 $\mathring{M}_{p}^{\alpha,s}$ denote the closure of $C_c$ in $M^{\alpha,s}_{p}$.
  As it turns out,  we have that $\mathring{M}_{p}^{\alpha,s}$ is a predual of $\mathcal{N}^{\alpha,s}_{p'}$. 
\begin{theorem}\label{triplettheorem}
	Let $p>1$, $\alpha>0$, $s>1$, with $\alpha s\leq n$.
We have 
$$(\mathring{M}^{\alpha,s}_{p})^{*} \approx \mathcal{N}^{\alpha,s}_{p'}$$ in the sense that each bounded linear functional $L\in (\mathring{M}^{\alpha,s}_{p})^{*}$ corresponds to a unique 
$g\in \mathcal{N}^{\alpha,s}_{p'}$ such that $L(v)=\int_{{\RR^n}}v(x)g(x)dx$ for all $v\in \mathring{M}^{\alpha,s}_{p}$, and  $\|g\|_{\mathcal{N}^{\alpha,s}_{p'}}\simeq \|L\|_{(\mathring{M}^{\alpha,s}_{p})^{*}}$.
\end{theorem}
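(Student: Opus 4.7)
The plan is to prove the duality by establishing both inclusions between $\mathcal{N}^{\alpha,s}_{p'}$ and $(\mathring{M}^{\alpha,s}_p)^{*}$. For the easy direction, any $g \in \mathcal{N}^{\alpha,s}_{p'}$ induces the functional $L_g(v) := \int v g\,dx$ on $\mathring{M}^{\alpha,s}_p$: since Theorem \ref{isomorphism} identifies $\mathcal{N}^{\alpha,s}_{p'} \approx (M^{\alpha,s}_p)'$, K\"othe duality gives $|\int vg\,dx| \leq \|g\|_{(M^{\alpha,s}_p)'}\|v\|_{M^{\alpha,s}_p} \simeq \|g\|_{\mathcal{N}^{\alpha,s}_{p'}}\|v\|_{\mathring{M}^{\alpha,s}_p}$ for $v \in \mathring{M}^{\alpha,s}_p$, and the map $g\mapsto L_g$ is injective by density of $C_c$.

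For the converse, given $L\in (\mathring{M}^{\alpha,s}_p)^{*}$, I would produce a representing $g$ in three steps. \emph{(i) Reduction to a Radon measure.} The embedding $L^\infty\hookrightarrow M^{\alpha,s}_p$ yields $\|v\|_{M^{\alpha,s}_p}\leq \|\chi_{B_N}\|_{M^{\alpha,s}_p}\|v\|_\infty$ for $v\in C_c(B_N)$, so $L$ is sup-norm continuous on each $C_c(B_N)$ and is represented by Riesz--Markov by a signed Radon measure $\mu_N$; by uniqueness these glue into a signed Radon measure $\mu$ on $\RR^n$ with $L(v)=\int v\,d\mu$ for $v\in C_c$. \emph{(ii) Absolute continuity $\mu\ll dx$.} The Sobolev-type capacity lower bound $\mathrm{Cap}_{\alpha,s}(K)\gtrsim |K|^{(n-\alpha s)/n}$ when $\alpha s<n$ (with an analogous logarithmic estimate when $\alpha s=n$) gives, for any bounded open $U$, $\|\chi_U\|_{M^{\alpha,s}_p}^{p}=\sup_{K\subset U}|K|/\mathrm{Cap}_{\alpha,s}(K)\lesssim |U|^{\alpha s/n}\to 0$ as $|U|\to 0$. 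Taking the supremum of $|L(v)|$ over $v\in C_c(U)$ with $|v|\leq 1$ yields $|\mu|(U)\lesssim \|L\|\,|U|^{\alpha s/(np)}$, hence $|\mu|(E)=0$ whenever $|E|=0$; Radon--Nikodym then provides $g\in L^1_{\rm loc}$ with $\mu=g\,dx$. \emph{(iii) Upgrading $g$ to $\mathcal{N}^{\alpha,s}_{p'}$.} Mollifications $v_k=v\ast\phi_{1/k}\in C_c$ satisfy $\|v_k\|_{M^{\alpha,s}_p}\leq\|v\|_{M^{\alpha,s}_p}$ (by translation invariance of the norm together with Minkowski's integral inequality) and converge to $v$ in $L^1$ for any bounded compactly supported $v$, so dominated convergence extends both $L(v)=\int vg\,dx$ and the bound $|L(v)|\leq\|L\|\|v\|_{M^{\alpha,s}_p}$ to all such $v$. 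Choosing $v=\mathrm{sign}(g)\min(|f|,M)\chi_{B_R}$ for $f\in M^{\alpha,s}_p$ and letting $M,R\to\infty$ via monotone convergence yields $\int|fg|\,dx\leq\|L\|\|f\|_{M^{\alpha,s}_p}$, i.e., $g\in (M^{\alpha,s}_p)'$ with $\|g\|_{(M^{\alpha,s}_p)'}\leq\|L\|$. Theorem \ref{isomorphism} then upgrades this to $\|g\|_{\mathcal{N}^{\alpha,s}_{p'}}\lesssim\|L\|$, and $L=L_g$ on all of $\mathring{M}^{\alpha,s}_p$ follows by density of $C_c$.

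I expect step (ii)---the absolute continuity $\mu\ll dx$---to be the main obstacle. This is precisely where the restriction from $M^{\alpha,s}_p$ to its subspace $\mathring{M}^{\alpha,s}_p$ becomes decisive: on the full space the characteristic function of a Lebesgue-null set of positive capacity has non-trivial $M^{\alpha,s}_p$-norm, so no representation of the dual by $L^1_{\rm loc}$ functions is possible there. The rescue comes from the observation that we only need control of $\|\chi_U\|_{M^{\alpha,s}_p}$ when $|U|$ is small, and on this restricted regime the Sobolev comparison $|K|/\mathrm{Cap}_{\alpha,s}(K)\to 0$ as $|K|\to 0$ forces the norm to vanish---exactly the absolute continuity of norm that allows $\mathring{M}^{\alpha,s}_p$ to behave like a classical Banach function space whose K\"othe dual coincides with its norm dual.
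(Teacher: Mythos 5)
Your proof is correct, and it takes a genuinely different route from the paper's. The paper's argument is modular: it first introduces $\mathcal{F}$, the closure in $M^{\alpha,s}_p$ of simple functions over sets of finite measure, and proves (Lemma \ref{finite}, deferred to a Radon--Nikodym argument \`a la Sawano--Tanaka together with relation \eqref{Mprime}) that $\mathcal{F}^*\approx\mathcal{N}^{\alpha,s}_{p'}$. It then verifies $\mathring{M}^{\alpha,s}_p\subset\mathcal{F}$ by approximating $C_c$ functions by step functions, and invokes Hahn--Banach to extend $L$ from $\mathring{M}^{\alpha,s}_p$ to $\mathcal{F}$. You instead bypass both the intermediate space $\mathcal{F}$ and Hahn--Banach entirely: you build the representing measure directly by Riesz--Markov on exhausting balls (using $L^\infty\hookrightarrow M^{\alpha,s}_p$), establish absolute continuity with respect to Lebesgue measure via the Sobolev lower bounds \eqref{aslessn}--\eqref{asequaln} on the capacity, and then upgrade the resulting $L^1_{\rm loc}$ density $g$ to a K\"othe-dual element by the mollification-plus-truncation estimate $\int |fg|\,dx\le\|L\|\,\|f\|_{M^{\alpha,s}_p}$, finishing with Theorem \ref{isomorphism}. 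The trade-off is that the paper's route packages the measure-theoretic work into a clean lemma (and would be easier to quote in other contexts), while your route is more self-contained and makes the role of the capacity lower bound --- which is exactly what forces $\|\chi_U\|_{M^{\alpha,s}_p}\to 0$ as $|U|\to 0$ and hence $\mu\ll dx$ --- explicit rather than hiding it inside a citation. Two small remarks worth tightening: the estimate $\|v*\phi_{1/k}\|_{M^{\alpha,s}_p}\le\|v\|_{M^{\alpha,s}_p}$ indeed holds because the Bessel capacity, and hence the $M^{\alpha,s}_p$-norm, is translation-invariant and $M^{\alpha,s}_p$ is a Banach function space (so Minkowski's integral inequality applies); and the closing aside about a ``Lebesgue-null set of positive capacity having non-trivial $M^{\alpha,s}_p$-norm'' is not quite right (if $f=0$ a.e. then $\|f\|_{M^{\alpha,s}_p}=0$), but it plays no role in the actual argument.
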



As a consequence of Theorem \ref{triplettheorem}, we obtain a triplet duality   relation 
$$\mathring{M}_{p}^{\alpha,s}\text{--}\mathcal{N}_{p'}^{\alpha,s}\text{--}M_{p}^{\alpha,s},$$ 
which is analogous to the famous triplet $VMO\text{--}H^{1}\text{--}BMO$ of  harmonic analysis (see \cite{CW}). 
See also \cite{AX2} where a similar triplet was claimed without proof in the context of Morrey spaces. We mention that our proof of Theorem 
\ref{triplettheorem}  is completely different from the $VMO\text{--}H^1$ duality proof of \cite{CW}. It is based on the relation $\mathcal{N}^{\alpha, s}_{p'} \approx (M^{\alpha, s}_p)'$,  Radon-Nikodym Theorem, and Hahn-Banach Theorem. Moreover, it  can also  be easily modified to   provide a proof the claimed triplet in \cite{AX2}. For other  related results 
 in the Morrey space setting, see \cite{ST, ISY}.

Thanks to the way the spaces $N^{\alpha, s}_{p'}$ and $\mathcal{N}^{\alpha, s}_{p'}$ are constructed and  Theorem \ref{isomorphism2}, we obtain the following important results regarding the behavior of the Hardy-Littlewood maximal functions 
 and Calder\'on-Zygmund operators on those spaces.

\begin{theorem}\label{Mlocbound} Let $p>1$, $\alpha>0$, $s>1$, and $\alpha s\leq n$. Then the local  Hardy-Littlewood maximal function 
	${\bf M}^{\rm loc}$  is bounded on $S$ where $S$ is any of the spaces in \eqref{chain}.
\end{theorem}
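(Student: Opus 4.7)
The plan is to establish the boundedness on $\mathcal{N}^{\alpha,s}_{p'}$ directly and then transfer it to the remaining four spaces through the equivalence of norms provided by Theorem~\ref{isomorphism2}; since $\mathbf{M}^{\rm loc}$ is sublinear, an inequality $\|\mathbf{M}^{\rm loc}g\|_X\leq C\|g\|_X$ on one of the five spaces immediately gives the analogous bound on every other space in \eqref{chain}. The space $\mathcal{N}^{\alpha,s}_{p'}$ is the natural starting point because the $A_1^{\rm loc}$ hypothesis, designed precisely to control $\mathbf{M}^{\rm loc}$, is built into its very definition.

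Given $g\in\mathcal{N}^{\alpha,s}_{p'}$ and $\varepsilon>0$, I would choose a near-optimal witness $w\in\mathcal{L}^1(C)\cap A_1^{\rm loc}$ with $\|w\|_{\mathcal{L}^1(C)}\leq 1$, $[w]_{A_1^{\rm loc}}\leq{\bf \overline{c}}(n,\alpha,s)$, and
$$\left(\int_{\RR^n} |g|^{p'} w^{1-p'}\,dx\right)^{1/p'}\leq \|g\|_{\mathcal{N}^{\alpha,s}_{p'}}+\varepsilon.$$
The key idea is to reuse the \emph{same} $w$ as a witness for $\mathbf{M}^{\rm loc}g$: all the structural constraints on $w$ are already in place, so the whole problem reduces to the weighted inequality
$$\int_{\RR^n}\bigl(\mathbf{M}^{\rm loc}g\bigr)^{p'}w^{1-p'}\,dx\leq C(n,\alpha,s,p)\int_{\RR^n}|g|^{p'}w^{1-p'}\,dx.$$

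For this inequality I would appeal to local Muckenhoupt theory applied to the dual weight $\sigma:=w^{1-p'}$. Directly from the definition of $A_1^{\rm loc}$ one obtains $A_1^{\rm loc}\hookrightarrow A_p^{\rm loc}$ with $[w]_{A_p^{\rm loc}}\leq [w]_{A_1^{\rm loc}}$, and the standard duality $[w^{1-p'}]_{A_{p'}^{\rm loc}}=[w]_{A_p^{\rm loc}}^{p'-1}$ then yields $[\sigma]_{A_{p'}^{\rm loc}}\leq{\bf \overline{c}}(n,\alpha,s)^{p'-1}$. The localized Muckenhoupt weighted norm inequality therefore provides $\mathbf{M}^{\rm loc}:L^{p'}(\sigma)\to L^{p'}(\sigma)$ with operator norm depending only on $n,\alpha,s,p$. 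Chaining these estimates and sending $\varepsilon\to 0$ gives $\|\mathbf{M}^{\rm loc}g\|_{\mathcal{N}^{\alpha,s}_{p'}}\leq C\,\|g\|_{\mathcal{N}^{\alpha,s}_{p'}}$, completing the argument for $\mathcal{N}^{\alpha,s}_{p'}$.

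The principal obstacle is to have the local Muckenhoupt machinery available in the exact form needed: the inclusion $A_1^{\rm loc}\subset A_p^{\rm loc}$, the $A_p^{\rm loc}$--$A_{p'}^{\rm loc}$ duality, and, most importantly, the weighted $L^{p'}$-boundedness of $\mathbf{M}^{\rm loc}$ for local $A_{p'}$ weights, all with quantitative control of the constants. Should an off-the-shelf reference be unavailable, these can be obtained via a localized Calder\'on-Zygmund decomposition using dyadic cubes of sidelength at most $1$, reducing to the classical Muckenhoupt theorem after noting that on such cubes $\mathbf{M}^{\rm loc}$ is pointwise comparable to a centered maximal operator whose averaging balls have radius at most $1$.
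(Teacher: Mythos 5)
Your proposal is correct and follows essentially the same route as the paper: the paper invokes Lemma \ref{Mlocweightedbound} (which, as noted there, extends to $A_p^{\rm loc}$ weights via \cite[Lemma 2.11]{Ryc}) applied to the dual weight $w^{1-p'}\in A_{p'}^{\rm loc}$, and then transfers the bound to the other spaces via the isomorphisms of Theorem \ref{isomorphism2}. Your reconstruction spells out the witness-weight trick and the $A_1^{\rm loc}\hookrightarrow A_p^{\rm loc}$/duality bookkeeping that the paper leaves implicit, but the core mechanism is identical.
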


We  recall the Hardy-Littlewood maximal function ${\bf M}$ is bounded on  
$M_{p}^{\alpha ,s}$, $\alpha s\leq n$, (see \cite{MS1}). However, unlike ${\bf M}$, standard singular integrals are generally unbounded on 
$M_{p}^{\alpha ,s}$. Take for example the $j$-th Riesz transform, 
$$R_j(f)(x)=c(n)\, {\rm p.v.}\int \frac{x_j-y_j}{|x-y|^{n+1}} f(y)dy , \qquad j=1,2,\dots,n,$$
and adapt the argument of  \cite[Theorem 1.1]{RT} to our setting, using the fact that $L^\infty\hookrightarrow M_{p}^{\alpha ,s}$.

On the other hand, ${\bf M}$ fails to be bounded on any of the spaces in \eqref{chain}, since they are included in $L^1$. Likewise, the first Riesz transform $R_1$, say, is also  unbounded on these spaces.
To see that, take a nonnegative function $f\in C_c^\infty(B_1(0))$ such that $f=1$ on $B_{1/2}(0)$.  Then for any $x=(x_1,x')=(x_1, x_2,\dots, x_n)$ with $x_1>1$ we have 
$$R_1(f)(x) \geq c(n)\int_{B_{1/2}(0)} \frac{x_1-y_1}{|x-y|^{n+1}} dy\geq c\, \frac{x_1}{|x|^{n+1}}.$$ 

This shows that $R_1(f)\not\in L^1$, since 
\begin{align*}
\|R_1(f)\|_{L^1} &\geq c \int_1^\infty \int_{|x'|<x_1} \frac{x_1}{|x|^{n+1}} dx' dx_1 \geq c \int_1^\infty \int_{|x'|<x_1} x_1^{-n} dx' dx_1\\
&=c \int_1^\infty x_1^{-1} dx_1 =+\infty,
\end{align*}
and thus it does not belong to any of the mentioned spaces.

However, the following `localized' boundedness property is applicable to  ${\bf M}$ and any standard Calder\'on-Zygmund  operator.

\begin{theorem}\label{CZop} Let $q>1$, $\alpha>0$, $s>1$, and $\alpha s\leq n$. Suppose that $T$ is an operator (not necessarily linear or sublinear) such that 
	$$\int |T (f)|^{q} w dx \leq C_1 \int |f|^{q} w dx$$
holds for all $f\in L^{q}(w)$ and all $w\in A_1$, with a constant $C_1$ depending only on $n, q$, and the bound for the $A_1$ constant of $w$.	
Then for any measurable function $f$ such that ${\rm supp}(f)\subset B_{R_0}(x_0)$,  $x_0\in\RR^n, R_0>0$, we have 
$$\|T(f) \chi_{B_{R_0}(x_0)}\|_{S}\leq C_2 \|f \|_{S},$$
where $S= N^{\alpha, s}_{q}, \mathcal{N}^{\alpha, s}_{q}$, $(M_{q'}^{\alpha ,s})'$, $\widetilde{N}^{\alpha, s}_{q}$, $B_{q}^{\alpha, s}$, or $M_{q}^{\alpha ,s}$. Here the constant  $C_2= C_2(n, \alpha, s, q, R_0)$.
\end{theorem}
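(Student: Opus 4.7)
The plan is to establish the bound first for $S = M^{\alpha,s}_q$, where Theorem \ref{firsttheorem} reduces the question to a weighted $L^q$ estimate, and then to transfer the conclusion to the predual spaces via Theorem \ref{isomorphism2}, which exhibits $\mathcal{N}^{\alpha,s}_q$, $N^{\alpha,s}_q$, $(M^{\alpha,s}_{q'})'$, $\widetilde{N}^{\alpha,s}_q$, and $B^{\alpha,s}_q$ as a common underlying set of measurable functions carrying pairwise equivalent norms.

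The key technical tool is a local-to-global $A_1$ extension lemma: for every $w \in A^{\rm loc}_1$ with $[w]_{A^{\rm loc}_1}\leq c_0$ and every ball $B = B_{R_0}(x_0)$, there exists $\tilde w \in A_1$ with $\tilde w = w$ a.e.\ on $B$ and $[\tilde w]_{A_1}\leq C(n, R_0, c_0)$. One constructs $\tilde w$ by continuing $w$ past $B$ with a suitably decaying tail designed to enforce the global $A_1$ condition. Granted this lemma, the case $S = M^{\alpha,s}_q$ is immediate. By Theorem \ref{firsttheorem},
\[
\|T(f)\chi_B\|_{M^{\alpha,s}_q}^q \simeq \sup_w \int_B |T(f)|^q\, w\, dx,
\]
the supremum running over admissible $w$. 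For each such $w$ I take its $A_1$-extension $\tilde w$; the hypothesis on $T$, together with the facts that $f$ is supported in $B$ and $\tilde w = w$ on $B$, gives
\[
\int_B |T(f)|^q\, w\, dx \leq \int |T(f)|^q \tilde w\, dx \leq C_1 \int |f|^q \tilde w\, dx = C_1 \int_B |f|^q\, w\, dx \leq C_1\, \|f\|_{M^{\alpha,s}_q}^q,
\]
with $C_1$ depending only on $n, \alpha, s, q, R_0$ through $[\tilde w]_{A_1}$.

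For the predual spaces it suffices, by Theorem \ref{isomorphism2}, to treat a single representative; I would work with $\mathcal{N}^{\alpha,s}_q$. Given $f$ with ${\rm supp}(f)\subset B$, pick a near-optimal admissible weight $w_0$ so that $\int |f|^q w_0^{1-q}\,dx \leq (1+\epsilon)\|f\|_{\mathcal{N}^{\alpha,s}_q}^q$, extend it via the lemma to $\tilde w_0 \in A_1$ agreeing with $w_0$ on $B$, and observe that $\tilde w_0^{1-q}\in A_q$ with controlled constant. The main obstacle will be that the hypothesis on $T$ is stated only for $A_1$ weights, whereas the infimum defining $\|\cdot\|_{\mathcal{N}^{\alpha,s}_q}$ naturally involves the $A_q$-type weights $w^{1-q}$; bridging this gap requires either a Rubio de Francia extrapolation step (available for the canonical examples of $T$ in view of the strong hypothesis on the $A_1$ constant dependence) or, alternatively, a duality argument built on the already-established bound for $M^{\alpha,s}_{q'}$ together with the identification $\mathcal{N}^{\alpha,s}_q \approx (M^{\alpha,s}_{q'})'$. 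Either way, one finally constructs a suitable admissible weight $w^\ast$ for $\|T(f)\chi_B\|_{\mathcal{N}^{\alpha,s}_q}$ by truncating and rescaling $\tilde w_0$, and verifies the $\mathcal{L}^1(C)$ and $A^{\rm loc}_1$ constraints on $w^\ast$.
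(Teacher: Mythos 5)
Your argument for $S=M^{\alpha,s}_q$ is correct and is essentially the paper's proof word for word: the local-to-global extension lemma you posit is the paper's Lemma \ref{weightextend} (obtained from Rychkov's work on $A_p^{\rm loc}$ weights), and combining the resulting $A_1$ bound with the supremum characterization of $\|\cdot\|_{M^{\alpha,s}_q}$ from Theorem \ref{firsttheorem} gives the conclusion immediately, exactly as you wrote.

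For the predual spaces you have correctly put your finger on a genuine tension that the paper's one-line conclusion (``Theorem \ref{CZop} now follows from Theorems \ref{isomorphism2} and \ref{firsttheorem}'') does not address. The test weights in $\|\cdot\|_{N^{\alpha,s}_q}$ and $\|\cdot\|_{\mathcal{N}^{\alpha,s}_q}$ enter as $w^{1-q}$ with $w\in A_1^{\rm loc}$; after extending $w$ to $\overline{w}\in A_1$, the weight $\overline{w}^{1-q}$ lies in $A_q$ but not in $A_1$, so the hypothesis on $T$, as stated, cannot be applied. Unfortunately neither of your suggested repairs closes the gap as written. Rubio de Francia extrapolation from an $A_1$ hypothesis at exponent $q$ yields $L^p(w)$ bounds for $w\in A_{p/q}$ with $p\ge q$, which at $p=q$ only recovers $A_1$ and never reaches $A_q$ at exponent $q$; and the duality route fails on two counts: the estimate you have in hand is for $M^{\alpha,s}_q$, not $M^{\alpha,s}_{q'}$, and more fundamentally $T$ is explicitly allowed to be nonlinear, so there is no adjoint with which to transfer the bound across the pairing $\mathcal{N}^{\alpha,s}_q \approx (M^{\alpha,s}_{q'})'$. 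The clean fix, consistent with how the paper treats ${\bf M}^{\rm loc}$ in Theorem \ref{Mlocbound} (where the remark after Lemma \ref{Mlocweightedbound} records the $A_p^{\rm loc}$ bound), is to assume the weighted $L^q$ estimate for all $w\in A_q$ rather than only $w\in A_1$; with that hypothesis your direct argument with the same extended near-optimal weight goes through verbatim for $\mathcal{N}^{\alpha,s}_q$, and Theorem \ref{isomorphism2} then transfers the bound to the remaining predual spaces.
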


We mention that  Theorem \ref{CZop} can be applied to the so-called (nonlinear) $m$-harmonic transform $\mathcal{H}_m$, $m>1$, where for each vector field $F\in L^m(\Omega, \RR^n)$ we define 
 $\mathcal{H}_m(F)=\nabla u$ with $u\in W^{1,m}_0(\Omega)$ being the unique solution of $\Delta_m u= {\rm div} (|F|^{m-2} F)$ in $\Omega$. Here $\Omega$ is a bounded $C^1$ domain in $\RR^n$ and $\Delta_m$ is the $m$-Laplacian defined as
$\Delta_m u ={\rm div} (|\nabla u|^{m-2} \nabla u)$. Indeed, this is possible since the weighted bound 
$$\int_{\Om} |\mathcal{H}_m(F)|^q w dx \leq C(n, m ,q, \Omega,  [w]_{A_1}) \int_{\Om} |F|^{q} w dx $$ 
holds for all weights $w\in A_1$ and $q\geq m$ (see \cite{Ph1, MP} for $q>m$ and \cite{AP1} for $q=m$). For $m$-Laplace equations with measure data, where the exponent $q$ can be less than the natural exponent $m$, see \cite{Ph3, NP}.

In Section \ref{homogeneoussetting} below, we shall discuss about the homogeneous versions of Theorems \ref{Mlocbound} and \ref{CZop} which involve Riesz potentials and Riesz capacities. We mention here that  results in the homogeneous setting are neater as they require no localization. 

\vspace{.2in}
\noindent {\bf Notation.} In the above and in what follows, for two quasinormed spaces $F$ and $G$ we write $F\approx G$ (respectively, $F=G$) to indicate that the two spaces are isomorphic (respectively, isometrically isomorphic). For two quantities $A$ and $B$, we write $A\simeq B$ to mean that there exist positive constants $c_1$ and $c_2$ such that $c_1 A\leq B\leq c_2 A$.

\section{Preliminaries}\label{Pre}

\subsection{Capacities and the space $L^1(C)$}
Recall that the Bessel capacity ${\rm Cap}_{\alpha, s}(\cdot)$, $\alpha>0, s>1$, is defined for every subset $E$ of $\RR^n$ by \eqref{BC}. It is an outer capacity, i.e., for any 
set $E\subset\RR^n$, 
$${\rm Cap}_{\alpha, s}(E)=\inf\{ {\rm Cap}_{\alpha, s}(G): G \supset E, G \text{ open}\},$$
and is countably subadditive in the sense that for ${E_i}\subset\RR^n$, $i=1,2, \dots,$ $$\text{Cap}_{\alpha,s}\Big(\bigcup_{i=1}^{\infty}E_{i}\Big)\leq\sum_{i=1}^{\infty}\text{Cap}_{\alpha,s}(E_{i}).$$

Moreover, it has the following basic properties of a Choquet capacity (see \cite{AH}):

 {\rm (i)}   $ \text{Cap}_{\alpha,s}(\emptyset)=0;$

 {\rm (ii)}  if $E_{1}\subset E_{2},$ then   $\text{Cap}_{\alpha,s}(E_{1})\leq\text{Cap}_{\alpha,s}(E_{2});$

 {\rm (iii)} if $K_1\supset K_2\supset \dots$ is a decreasing sequence of compact sets of $\RR^n$, then
$$\text{Cap}_{\alpha,s}\Big(\bigcap_{i=1}^{\infty}K_{i}\Big)=\lim_{i\rightarrow\infty}\text{Cap}_{\alpha,s}(K_{i});$$

 {\rm (iv)} if $E_1\subset E_2\subset \dots$ is an increasing sequence of  subsets of $\RR^n$, then
\begin{equation}\label{inc}
\text{Cap}_{\alpha,s}\Big(\bigcup_{i=1}^{\infty}E_{i}\Big)=\lim_{i\rightarrow\infty}\text{Cap}_{\alpha,s}(E_{i}).
\end{equation}

Thus by the Capacitability Theorem  (see \cite{Cho, Mey}), for any Borel (or more generally Suslin) set $E\subset\RR^n$ we have  
$$\text{Cap}_{\alpha,s}(E)=\sup\{\text{Cap}_{\alpha,s}(K): K\subset E, K \text{ compact}\}.$$

By \eqref{WH} we see that if $\alpha$ is a positive integer, then 
${\rm Cap}_{\alpha,s}(E)\simeq  C_{\alpha,s}(E)$ for any set $E\subset\RR^n$ (see also \cite{AH}). Here for a compact set $K\subset\RR^n$ and $\alpha\in \NN$, we define
$$C_{\alpha, s}(K)=\inf\{  \|\varphi\|_{W^{\alpha ,s}}: \varphi\in C_c^\infty, \varphi\geq 1 \text{~on~} K \},$$
and $C_{\alpha, s}(\cdot)$ is extended to any set $E$ of $\RR^n$ by letting 
\begin{equation}\label{extension}
C_{\alpha, s}(E):=\inf_{\substack{G\supset E\\ G\, {\rm open} }}\left\{ \sup_{\substack{K\subset G \\ K \, {\rm compact}}} C_{\alpha, s}(K) \right\}.
\end{equation}

For $s=2$ and $\alpha\in (0,1]$, it is known that  ${\rm Cap}_{\alpha,s}(\cdot)$ is strongly subadditive in the sense of \eqref{StrongS}
 (see \cite[pp. 141--145]{Lan}). We note that the book \cite{Lan} considers only  Riesz capacities, i.e.,  homogeneous versions of  ${\rm Cap}_{\alpha,2}(\cdot)$.
 However, the argument there also applies to Bessel capacities    since for any $\alpha\in (0,1]$ the Bessel kernel $G_{2\alpha}$ is continuous and subharmonic in 
 $\RR^n\setminus \{0\}$ (hence the First Maximum Principle in the sense of  \cite[Theorem 1.10]{Lan} holds).
 
 On the other hand, for $\alpha=1$, the capacity $C_{1, s}(\cdot)$ is strongly subadditive for any $s>1$. Indeed, this can be  proved  by adapting the proof of \cite[Theorem 2.2]{HKM} to our nonhomogeneous setting.

We shall need the following metric properties of ${\rm Cap}_{\alpha,s}(\cdot)$ (see \cite{AH}): For any $0<r\leq 1$,
\begin{equation}\label{rleq1}
{\rm Cap}_{\alpha,s}(B_r)\simeq r^{n-\alpha s} \quad \text{if } \alpha s<n
\end{equation}

and 
$${\rm Cap}_{\alpha,s}(B_r)\simeq [\log(\tfrac{2}{r})]^{1-s} \quad \text{if } \alpha s=n.$$

For $r\geq 1$ and $\alpha s\leq n$ we have 
\begin{equation}\label{rgeq1}
{\rm Cap}_{\alpha,s}(B_r)\simeq r^n.
\end{equation}

On the other hand, we have for any non-empty set $E$ with ${\rm diam}(E)\leq 1$, 
\begin{equation}\label{alphasn}
{\rm Cap}_{\alpha,s}(E)\simeq 1 \quad \text{if } \alpha s>n.
\end{equation}

By Sobolev Embedding Theorem for any Lebesgue measurable set $E$,
\begin{equation}\label{aslessn}
|E|^{1-\alpha s/n} \leq C\, {\rm Cap}_{\alpha,s}(E) \quad \text{if } \alpha s <n.
\end{equation}

Moreover, by Young's inequality for convolution we have, for $s=n/\alpha>1$, 
$$\|G_\alpha*f\|_{L^q}\leq \|G_\alpha\|_{L^r} \|f\|_{L^{\frac{n}{\alpha}}}$$
for any $n/\alpha\leq q<+\infty$ and $r=nq/(n+q(n-\alpha))$. Thus for any $\epsilon\in (0,1]$ we find
\begin{equation}\label{asequaln}
|E|^\epsilon \leq C(\epsilon)\, {\rm Cap}_{\alpha,s}(E) \quad \text{if } \alpha s =n.
\end{equation}

Note that using the bound $\|G_\alpha*f\|_{L^s}\leq \|G_\alpha\|_{L^1} \|f\|_{L^{s}},$
we also find that
\begin{equation*}
|E| \leq C\, {\rm Cap}_{\alpha,s}(E) \quad \text{for all } \alpha>0, s>1.
\end{equation*}

It follows  that if ${\rm Cap}_{\alpha,s}(E)=0$ then  the Lebesgue measure of $E$ is zero. Moreover, if $f\in L^\infty(\RR^n)$ then $f\in M^{\alpha, s}_p$ for any $p\geq 1$, and 
\begin{equation}\label{LinftyM}
\|f\|_{M^{\alpha, s}_p}\leq C \|f\|_{L^\infty(\RR^n)}.
\end{equation}

On the other hand, when $\alpha s<n$ by \eqref{aslessn} we have  
\begin{equation}\label{LweakM}
\|f\|_{M^{\alpha, s}_p}\leq C \|f\|_{L^{\frac{n p}{\alpha s},\infty}(\RR^n)},
\end{equation}
where $L^{\frac{np}{\alpha s},\infty}(\RR^n)$ is  the weak $L^{\frac{np}{\alpha s}}$ space.

\begin{remark} Let $S$ be any of the spaces  in \eqref{chain}.  As $S^*\approx M^{\alpha,s}_{p}$, using the embedding \eqref{LinftyM},  we see that 
	$ S \hookrightarrow L^1$, $\alpha s\leq n$. Likewise,  by \eqref{LweakM} we also have   $S \hookrightarrow L^{\frac{np}{np- \alpha s }, 1}$,
	$\alpha s<n$.
	Here $L^{\frac{np}{np-\alpha s}, 1}$ is a Lorentz space which is the predual of  $L^{\frac{np}{\alpha s}, \infty}$.
\end{remark}

%

The Choquet integral  of a $\text{Cap}_{\alpha,s}$-quasieverywhere defined function $w:\RR^n \rightarrow [0,\infty]$   was defined by
\eqref{ChoI}. We  also let  $L^1(C)$ be the space of quasicontinuous  functions $f$ in $\mathbb{R}^n$ such that  \eqref{L1C} holds. Perhaps, a better notation for 
$L^1(C)$ should be $L^1({\rm Cap}_{\alpha,s})$ to indicate its dependence on ${\rm Cap}_{\alpha,s}$. But we shall use the notation   $L^1(C)$ for simplicity and implicitly understand that $C={\rm Cap}_{\alpha,s}$.

In general,
the `norm' of $L^1(C)$ is only a quasinorm, i.e., we only  have
$$\| f+g\|_{L^1(C)}\leq 2 \|g\|_{L^1(C)}+ 2 \|g\|_{L^1(C)}. $$
However, if  $\text{Cap}_{\alpha,s}$ is strongly subadditive then it is actually a norm by a theorem of Choquet (see \cite{Cho, Den}).

In \cite[Theorem 4]{Ad3} the following quasiadditivity result was obtained for $\text{Cap}_{\alpha,s}$:
\begin{equation}\label{QAC}
\sum_{j=1}^{\infty}\text{Cap}_{\alpha,s}(E\cap\{j-1\leq|x|<j\})\leq C \, \text{Cap}_{\alpha,s}(E)
\end{equation}
for all $E\subset\RR^n$, where $C=C(n,\alpha, s)>0$. We now use \eqref{QAC} to obtain the following density result for the space  $L^1(C)$.
\begin{proposition}\label{L1}
	$C_{c}(\RR^n)$ is dense in $L^{1}(C)$, where $C_{c}(\RR^n)$ is the linear space of
continuous functions with compact support in $\RR^n$.	 
\end{proposition}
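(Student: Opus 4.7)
The plan is to approximate any $f \in L^1(C)$ by an element of $C_c(\RR^n)$ in three steps---truncation to a bounded function, localization to compact support via the quasiadditivity \eqref{QAC}, and passage from quasicontinuous to continuous via Tietze's extension theorem---combined using the quasi-triangle inequality
\[\|u+v\|_{L^1(C)} \leq 2\bigl(\|u\|_{L^1(C)} + \|v\|_{L^1(C)}\bigr),\]
which follows immediately from $\{|u+v|>t\}\subset\{|u|>t/2\}\cup\{|v|>t/2\}$ and the countable subadditivity of $\text{Cap}_{\alpha,s}$.

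For the truncation step, set $f_N := \max(-N,\min(f,N))$; composition with the continuous map $t\mapsto \max(-N,\min(t,N))$ preserves quasicontinuity, and the identity $|f-f_N|=(|f|-N)_+$ yields $\|f - f_N\|_{L^1(C)} = \int_N^\infty \text{Cap}_{\alpha,s}(\{|f| > u\})\,du$, which tends to $0$ as $N\to\infty$ because $\|f\|_{L^1(C)} < \infty$. For localization, pick $\psi_j\in C_c^\infty(\RR^n)$ with $\chi_{B_j}\leq\psi_j\leq\chi_{B_{j+1}}$ and set $g_j:=\psi_j f_N$; then $g_j$ is quasicontinuous, bounded by $N$, supported in $\overline{B_{j+1}}$, and satisfies $|f_N-g_j|\leq|f_N|\chi_{B_j^c}$. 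Writing $B_j^c=\bigsqcup_{k\geq j+1}A_k$ with $A_k:=\{k-1\leq|x|<k\}$, countable subadditivity of $\text{Cap}_{\alpha,s}$ at each level gives $\|f_N - g_j\|_{L^1(C)} \leq \sum_{k\geq j+1}\||f_N|\chi_{A_k}\|_{L^1(C)}$, and the quasiadditivity \eqref{QAC}, applied at each level set of $|f_N|$, bounds the full series by $C\|f_N\|_{L^1(C)}<\infty$, forcing the tail to vanish as $j\to\infty$.

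For the final approximation, given $\delta > 0$, quasicontinuity of $g_j$ provides an open $G$ with $\text{Cap}_{\alpha,s}(G)<\delta$ such that $g_j|_{G^c}$ is continuous and bounded by $N$. Extend this restriction by Tietze's theorem to a continuous $\tilde g:\RR^n\to\RR$ with $\|\tilde g\|_\infty\leq N$, then multiply by a cutoff $\eta\in C_c(\RR^n)$ equal to $1$ on $\overline{B_{j+1}}$ and vanishing outside $B_{j+2}$, obtaining $h := \eta\tilde g\in C_c(\RR^n)$. Since $g_j$ vanishes outside $\overline{B_{j+1}}$, a short verification gives $h=g_j$ on $G^c$, so $|g_j-h|\leq 2N\chi_G$ quasi-everywhere and $\|g_j - h\|_{L^1(C)}\leq 2N\,\text{Cap}_{\alpha,s}(G) < 2N\delta$. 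For $\varepsilon > 0$, I would choose $N$, then $j$, then $\delta$ so that each of the three errors is below a suitable fraction of $\varepsilon$; iterating the quasi-triangle inequality then yields $\|f - h\|_{L^1(C)} < \varepsilon$. I expect the main obstacle to be the localization step: without the Adams quasiadditivity \eqref{QAC}, the tail $\||f_N|\chi_{B_j^c}\|_{L^1(C)}$ cannot be shown to decay, since $\text{Cap}_{\alpha,s}$ is merely countably subadditive rather than additive on disjoint pieces.
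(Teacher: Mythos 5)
Your proof is correct. It uses the same three ingredients as the paper's proof (truncation, the quasiadditivity bound \eqref{QAC}, and Tietze extension), but you re-order the last two steps and apply \eqref{QAC} in a different way. The paper first passes to a bounded \emph{continuous} function $v$ via Tietze, then localizes: it applies \eqref{QAC} directly to the open set $O=\{|v|>1/N\}$ of finite capacity to extract a radius $j_0$ beyond which $\text{Cap}_{\alpha,s}(O\cap\{|x|\geq j_0\})$ is small, and then separately controls the region $\{|v|\leq 1/N\}$ where $v$ is pointwise small. You instead localize first, staying at the level of the truncated (merely quasicontinuous) function $f_N$: you apply \eqref{QAC} at \emph{each level set} $\{|f_N|>t\}$, then integrate in $t$ and swap with the sum over annuli to obtain the clean estimate $\sum_{k}\||f_N|\chi_{A_k}\|_{L^1(C)}\leq C\|f_N\|_{L^1(C)}<\infty$, from which the tail vanishes. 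This is a genuinely different (and arguably more direct) exploitation of quasiadditivity; the price you pay is a second cutoff by $\eta$ in the Tietze step, whereas the paper's ordering (continuity before localization) means its cutoff can be chosen once and for all on the already-continuous $v$. Both arguments are fully correct; yours has the slight conceptual advantage that the localization lemma (decay of $\||f_N|\chi_{B_j^c}\|_{L^1(C)}$ as $j\to\infty$, for any $f_N\in L^1(C)$) is isolated as a self-contained fact independent of continuity considerations.
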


\begin{proof}
	We first show that the set of all bounded continuous functions is dense in $L^{1}(C)$. Let $f\in L^{1}(C)$ be given. For $M>0$, we define $f_{M}(x)=f(x)$ if $|f(x)|\leq M$, $f_M(x)=M$ if $f(x)>M$, and $f_M(x)=-M$ if $f(x)<-M$.  Note that 
	\begin{align*}
	\|f_{M}-f\|_{L^{1}(C)}&=\int_{0}^{\infty}\text{Cap}_{\alpha,s}(\{|f_{M}-f|>t\})dt\\
	&=\int_{0}^{\infty}\text{Cap}_{\alpha,s}(\{|f|>M+t\})dt\\
	&=\int_{M}^{\infty}\text{Cap}_{\alpha,s}(\{|f|>t\})dt \rightarrow 0,
	\end{align*}
	as $M\rightarrow\infty$. For any $\epsilon>0$, choose an $M>0$ such that $\|f_{M}-f\|_{L^{1}(C)}<\epsilon$. As $f_{M}$ is quasicontinuous (since $f$ is quasicontinuous), there exists an open set $G$ such that $\text{Cap}_{\alpha,s}(G)<\epsilon$ and $f_{M}\big|_{G^{c}}$ is continuous. 
	
	By Tietze Extension Theorem, we can find a continuous function $v$ such that $|v|\leq M$ and $v=f_{M}$ on $G^{c}$. Then 
	\begin{align*}
	\|v-f_{M}\|_{L^{1}(C)}&=\int_{0}^{\infty}\text{Cap}_{\alpha,s}(\{|v-f_{M}|>t\})dt\\
	&=\int_{0}^{\infty}\text{Cap}_{\alpha,s}(\{x\in G: |v(x)-f_{M}(x)|>t\})dt\\
	&=\int_{0}^{2M}\text{Cap}_{\alpha,s}(\{x\in G: |v(x)-f_{M}(x)|>t\})dt\\
	&\leq 2M\text{Cap}_{\alpha,s}(G) <2M \, \epsilon.
	\end{align*}
	
	As a result, 
	$$\|f-v\|_{L^{1}(C)}\leq 2\|f-f_{M}\|_{L^{1}(C)}+2\|f_{M}-v\|_{L^{1}(C)}<2\epsilon+4M\, \epsilon,$$ which yields the claim.

	Now we claim that $C_{c}$ is dense in $L^{1}(C)$. All we need to do is to approximate bounded continuous functions by functions in $C_{c}$. To this end, let $v$ be a bounded continuous function, say, $|v|\leq M$ for some $M>0$. For each $N=1,2,...$, let $O_{N}=\{|v|>1/N\}$, then $O_{N}$ is open and $$\text{Cap}_{\alpha,s}(O_{N})\leq N \|v\|_{L^{1}(C)}<+\infty.$$

	We observe that  
	\begin{align*}
	\|v\chi_{O_{N}^{c}}\|_{L^{1}(C)}&=\int_{0}^{\infty}\text{Cap}_{\alpha,s}(\{x\in O_{N}^{c}: |v(x)|>t\})dt\\
	&=\int_{0}^{1/N}\text{Cap}_{\alpha,s}(\{x\in O_{N}^{c}: |v(x)|>t\})dt\\
	&\leq\int_{0}^{1/N}\text{Cap}_{\alpha,s}(\{|v|>t\})dt \rightarrow 0,
	\end{align*}
	as $N\rightarrow\infty$. Thus for any $\epsilon>0$, there is an open set $O$ such that $\text{Cap}_{\alpha,s}(O)<\infty$ and $\|v\chi_{O^{c}}\|_{L^{1}(C)}<\epsilon$. Since $\text{Cap}_{\alpha,s}(O)<\infty$, by \eqref{QAC} we have 
	\begin{align*}
	\sum_{j=0}^{\infty}\text{Cap}_{\alpha,s}(O\cap\{j\leq|x|<j+1\})<\infty,
	\end{align*}
	and so there is a positive integer $j_{0}$ such that 
	\begin{align*}
	\text{Cap}_{\alpha,s}(O\cap\{|x|\geq j_{0}\})\leq\sum_{j=j_{0}}^{\infty}\text{Cap}_{\alpha,s}(O\cap\{j\leq|x|<j+1\})<\epsilon.
	\end{align*}

	Let $O_{1}=O\cap\{|x|<j_{0}\}$ and $O_{2}=O\cap\{|x|\geq j_{0}\}$, then $O=O_{1}\cup O_{2}$, $O_{1}$ is bounded, and $\text{Cap}_{\alpha,s}(O_{2})<\epsilon$. Let $\eta$ be a continuous function with compact support such that $0\leq\eta\leq 1$ and $\eta \equiv1$ on $O_{1}$. We have 
	\begin{align*}
	\|\eta v-v\|_{L^{1}(C)}&\leq 2 \|(\eta v-v)\chi_{O^{c}}\|_{L^{1}(C)}\\
	&\quad + 4\|(\eta v-v)\chi_{O_{1}}\|_{L^{1}(C)}+ 4 \|(\eta v-v)\chi_{O_{2}}\|_{L^{1}(C)}\\
	&= 2\|(\eta v-v)\chi_{O^{c}}\|_{L^{1}(C)}+ 4\|(\eta v-v)\chi_{O_{2}}\|_{L^{1}(C)},
	\end{align*}
	since $\eta \equiv 1$ on $O_{1}$.

	On the other hand, note that 
	\begin{align*}
	\|(\eta v-v)\chi_{O^{c}}\|_{L^{1}(C)}&=\int_{0}^{\infty}\text{Cap}_{\alpha,s}(\{x\in O^{c}: |(\eta v)(x)-v(x)|>t\})dt\\
	&\leq \int_{0}^{\infty}\text{Cap}_{\alpha,s}(\{x\in O^{c}: 2|v(x)|>t\})dt\\
	&\leq 2\|v\chi_{O^c}\|_{L^{1}(C)}\\
	&<2\epsilon.
	\end{align*}
	
	Also, 
	\begin{align*}
	\|(\eta v-v)\chi_{O_{2}}\|_{L^{1}(C)}&\leq\int_{0}^{2M}\text{Cap}_{\alpha,s}(\{x\in O_{2}:|(\eta v)(x)-v(x)|>t\})dt\\
	&\leq 2M\text{Cap}_{\alpha,s}(O_{2})\\
	&<2M\, \epsilon.
	\end{align*}

	Thus, we conclude that 
	\begin{align*}
	\|\eta v-v\|_{L^{1}(C)}<4\epsilon+8M\, \epsilon,
	\end{align*}
and since $\eta v$ has compact support, the proof is then complete.
\end{proof}

We are now ready to establish the completeness of $L^1(C)$. 

\begin{proposition}\label{completeness} The quasinorm space	$L^{1}(C)$ is complete for any $\alpha>0$ and $s>1$.
\end{proposition}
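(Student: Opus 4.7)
My plan is to adapt the classical ``extract a rapidly convergent subsequence'' argument (familiar from $L^p$) to the present quasinormed Choquet setting, with the extra twist that the limit must be quasicontinuous, not merely defined quasieverywhere.

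\textbf{Step 1 (Rapid subsequence).} Given a Cauchy sequence $\{f_k\} \subset L^1(C)$, I will iteratively pass to a subsequence $\{f_{k_j}\}$ with $\|f_{k_{j+1}} - f_{k_j}\|_{L^1(C)} < 4^{-j}$. Monotonicity of $\text{Cap}_{\alpha,s}$ applied to the layer-cake expression \eqref{L1C} gives the Chebyshev-type bound $\text{Cap}_{\alpha,s}(\{|g|>t\}) \leq t^{-1}\|g\|_{L^1(C)}$, so $A_j := \{|f_{k_{j+1}} - f_{k_j}| > 2^{-j}\}$ satisfies $\text{Cap}_{\alpha,s}(A_j) \leq 2^{-j}$. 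Countable subadditivity then forces $\text{Cap}_{\alpha,s}\bigl(\bigcap_N \bigcup_{j \geq N} A_j\bigr) = 0$, so outside a capacity-zero set the telescoping series $f_{k_1} + \sum_j (f_{k_{j+1}} - f_{k_j})$ converges absolutely; call its pointwise sum $f$.

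\textbf{Step 2 (Quasicontinuity of $f$ --- the main obstacle).} I need an \emph{open} exceptional set for $f$. For each $j$ the quasicontinuity of $f_{k_j}$ supplies an open $G_j$ with $\text{Cap}_{\alpha,s}(G_j) < 2^{-j}$ on whose complement $f_{k_j}$ is continuous. Since $\text{Cap}_{\alpha,s}$ is an outer capacity, I can enlarge $A_j$ to an open set $\widetilde A_j \supset A_j$ with $\text{Cap}_{\alpha,s}(\widetilde A_j) < 2^{-j+1}$. Setting $H_N := \bigcup_{j \geq N}(G_j \cup \widetilde A_j)$, each $H_N$ is open with $\text{Cap}_{\alpha,s}(H_N) \lesssim 2^{-N}$. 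On $H_N^c$, every $f_{k_j}$ with $j \geq N$ is continuous and $|f_{k_{j+1}} - f_{k_j}| \leq 2^{-j}$, so the series converges uniformly and $f|_{H_N^c}$ is continuous; letting $N \to \infty$ yields quasicontinuity.

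\textbf{Step 3 (Convergence in quasinorm).} Using the dyadic weights $c_i = 2^{-(i-j+1)}$ (with $\sum_{i \geq j} c_i = 1$), I have $\{|f - f_{k_j}| > t\} \subset \bigcup_{i \geq j} \{|f_{k_{i+1}} - f_{k_i}| > t c_i\}$ q.e.; countable subadditivity followed by integration in $t$ gives
\begin{equation*}
\|f - f_{k_j}\|_{L^1(C)} \leq \sum_{i \geq j} c_i^{-1} \|f_{k_{i+1}} - f_{k_i}\|_{L^1(C)} \leq \sum_{i \geq j} 2^{i-j+1} \cdot 4^{-i} = 4 \cdot 4^{-j} \longrightarrow 0.
\end{equation*}
One application of the quasi-triangle inequality $\|u+v\|_{L^1(C)} \leq 2(\|u\|_{L^1(C)} + \|v\|_{L^1(C)})$ then places $f$ in $L^1(C)$, and a second application upgrades subsequential convergence to convergence of the full Cauchy sequence $\{f_k\}$ to $f$.

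The genuinely delicate step is Step 2: the bare sets $A_j$ are only defined quasieverywhere and need not be open, so without outer regularity of $\text{Cap}_{\alpha,s}$ one could only conclude that $f$ agrees q.e.\ with some quasicontinuous function. Outer regularity of the Bessel capacity is precisely what allows me to assemble the three families $\{G_j\}$, $\{A_j\}$, and $\{\widetilde A_j\}$ into a single decreasing family of open exceptional sets whose capacities tend to zero.
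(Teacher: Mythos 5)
Your proof is correct, but it takes a genuinely different route from the paper's. The paper first invokes Proposition \ref{L1} to replace the given Cauchy sequence by one in $C_c(\RR^n)$; with continuous functions the superlevel sets $G_j = \{|u_{n_{j+1}}-u_{n_j}|>2^{-j}\}$ are automatically open, so the exceptional sets $H_m = \bigcup_{j\geq m} G_j$ are open with no further work, and quasicontinuity of the limit follows at once from uniform convergence on $H_m^c$. You instead work directly with the general quasicontinuous $f_k$'s, and must manufacture the openness yourself: you use the outer regularity of ${\rm Cap}_{\alpha,s}$ to thicken the superlevel sets $A_j$ into open $\widetilde A_j$, and you bring in the open sets $G_j$ coming from the quasicontinuity of each $f_{k_j}$, combining them into $H_N = \bigcup_{j\geq N}(G_j\cup\widetilde A_j)$. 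This buys you independence from Proposition \ref{L1}, which in turn relies on the nontrivial quasiadditivity estimate \eqref{QAC} and the Tietze extension argument; your proof is self-contained modulo outer regularity, countable subadditivity, and Chebyshev. Your Step 3 is also different: the paper bounds $\|u_n-u\|_{L^1(C)}$ via the increasing-set property \eqref{inc} and Monotone Convergence, while you use the dyadic splitting $\{|f-f_{k_j}|>t\}\subset\bigcup_{i\geq j}\{|f_{k_{i+1}}-f_{k_i}|>tc_i\}$ with $\sum c_i=1$, followed by countable subadditivity and a substitution in the layer-cake integral. Both convergence arguments are standard and valid; yours gives the explicit rate $\|f-f_{k_j}\|_{L^1(C)}\leq 4\cdot 4^{-j}$. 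One small bookkeeping remark: since $f_{k_{j+1}}-f_{k_j}$ is only defined quasieverywhere, the set $A_j$ is only determined up to a set of capacity zero; you should note that the capacity-zero ambiguity set is absorbed into the open sets $G_j$ (which are exactly the sets outside of which the representatives are defined and continuous), so that the open cover $H_N$ does control everything. With that clarification the argument is complete.
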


\begin{proof}
	Let $\{u_{n}\}$ be a Cauchy sequence in $L^{1}(C)$. We need to show that  $u_{n}\rightarrow u$ in $L^1(C)$ for some $u\in L^{1}(C)$.
	Since $C_{c}$ is dense in $L^{1}(C)$, we may assume that $\{u_{n}\} \subset C_{c}$.

	As $\{u_{n}\}$ is a Cauchy sequence, we can find positive integers $n_{1}<n_{2}<\cdots$ such that 
	\begin{align}\label{4j}
	\int_{0}^{\infty}\text{Cap}_{\alpha,s}(\{|u_{m}-u_{n}|>t\})dt<4^{-j}
	\end{align}
	for all $m,n\geq n_{j}$, $j=1,2, \dots$ In particular,
	\begin{align*}
	\int_{0}^{2^{-j}} \text{Cap}_{\alpha,s}(|u_{n_{j+1}}-u_{n_{j}}|>2^{-j}) dt<4^{-j}
	\end{align*}
	and hence 
	\begin{align*}
	\text{Cap}_{\alpha,s}(|u_{n_{j+1}}-u_{n_{j}}|>2^{-j})<2^{-j}.
	\end{align*}

	Let $G_{j}=\{|u_{n_{j+1}}-u_{n_{j}}|>2^{-j}\}$. Then $G_{j}$ is open and $\text{Cap}_{\alpha,s}(G_{j})<2^{-j}$. We now set
	\begin{align*}
	H_{m}=\bigcup_{j\geq m}G_{j}.
	\end{align*}
	Then we have  
	\begin{align}\label{smallcap}
	\text{Cap}_{\alpha,s}(H_{m})\leq\sum_{j\geq m}\text{Cap}_{\alpha,s}(G_{j})<\sum_{j\geq m}2^{-j}\rightarrow 0
	\end{align}
	as $m\rightarrow\infty$.

	Observe that for any $x\in H_{m}^{c}$, we have
	\begin{align*}
	\sum_{j\geq m}|u_{n_{j+1}}(x)-u_{n_{j}}(x)|\leq\sum_{j\geq m}2^{-j}<+ \infty.
	\end{align*} 
	Thus if we let $u: H_{m}^{c}\rightarrow \RR$ be defined by 
	 \begin{align*}
	 u(x)=\lim_{k\rightarrow\infty}u_{n_{k}}(x)= u_{n_{m}}(x)+ \lim_{k\rightarrow \infty}\sum_{j=m+1}^k(u_{n_{j}}(x)-u_{n_{j-1}}(x)), 
	 \end{align*}
	then   by the  Weierstrass M-Test we see that   $u$ is continuous in $H_{m}^{c}$. 
	
	As the set $H_{m}^{c}$ is increasing, the function $u$ can be extended to define in 
	the union $\bigcup_{m\geq 1} H_m^c$. It is now easy to see from \eqref{smallcap} that $u$ is 
	 quasicontinuous.

	 Now by \eqref{inc} and the Monotone Convergence Theorem we have for each $n\geq 1$,  
	\begin{align*}
	\|u_{n}-u\|_{L^{1}(C)}&=\int_{0}^{\infty}\text{Cap}_{\alpha,s}(\{|u_{n}-u|>t\})dt\\
	&\leq\int_{0}^{\infty}\text{Cap}_{\alpha,s}\left(\bigcup_{N\geq 1}\bigcap_{k\geq N}\{|u_{n}-u_{n_{k}}|>t\}\right)dt\\
	&=\int_{0}^{\infty}\lim_{N\rightarrow\infty}\text{Cap}_{\alpha,s}\left(\bigcap_{k\geq N}\{|u_{n}-u_{n_{k}}|>t\}\right)dt\\
	&=\lim_{N\rightarrow\infty}\int_{0}^{\infty}\text{Cap}_{\alpha,s}\left(\bigcap_{k\geq N}\{|u_{n}-u_{n_{k}}|>t\}\right)dt.
	\end{align*} 
	
	Thus by \eqref{4j} for each $j=1,2, \dots$, and $n\geq n_j$, we have 
	\begin{align*}
	\|u_{n}-u\|_{L^{1}(C)} \leq 4^{-j}.
	\end{align*} 
	
	This completes the proof of the proposition.
\end{proof}

The following duality relation was stated without proof in \cite{Ad4}. Indeed, it can be  proved using  Proposition \ref{L1} and the formula 
$$\int_{\RR^n} u d|\mu|= \sup\left \{\int_{\RR^n} v d\mu: v\in C_c(\RR^n), |v|\leq u\right\}, $$
which holds for all $u\in C_c(\RR^n)$ and $u\geq 0$.

\begin{theorem}\label{first dual} Let $\alpha>0$ and $s>1$. We have 
	$\left(L^{1}(C)\right)^{\ast}=\mathfrak{M}^{\alpha,s}$ in the sense that each  bounded linear functional  $L  \in\left(L^{1}(C)\right)^{\ast}$ 
	corresponds to a unique measure $\nu\in \mathfrak{M}^{\alpha,s}$ 
	in such a way  that
	\begin{align}\label{inrep}
	L(f)=\int_{\RR^n}f(x)d\nu(x)
	\end{align}
	for all $f\in L^1(C)$.  Moreover, $\|L\|=\|\nu\|_{\mathfrak{M}^{\alpha,s}}$.
\end{theorem}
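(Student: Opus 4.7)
The plan is to handle the two directions separately, using the density of $C_{c}(\RR^{n})$ in $L^{1}(C)$ from Proposition \ref{L1} together with the variational formula
$$\int u\,d|\mu|=\sup\left\{\int v\,d\mu:v\in C_{c}(\RR^{n}),\,|v|\leq u\right\},\qquad u\in C_{c}(\RR^{n}),\ u\geq 0,$$
quoted just before the theorem.

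\textbf{From $\mathfrak{M}^{\alpha,s}$ to $(L^{1}(C))^{*}$.} Given $\nu\in \mathfrak{M}^{\alpha,s}$, I first extend the defining bound from compact sets to arbitrary Borel sets. Since $|\nu|$ is a locally finite Borel measure on $\RR^{n}$ it is inner regular; combined with monotonicity of $\text{Cap}_{\alpha,s}$ this gives
$$|\nu|(E)\leq\sup\{|\nu|(K):K\subset E,\ K\text{ compact}\}\leq \|\nu\|_{\mathfrak{M}^{\alpha,s}}\,\text{Cap}_{\alpha,s}(E)$$
for every Borel $E$. In particular, capacity-zero sets are $|\nu|$-null, so the level sets $\{|f|>t\}$ of a quasicontinuous $f\in L^{1}(C)$ are well-defined modulo $|\nu|$. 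The layer-cake formula then yields
$$\left|\int f\,d\nu\right|\leq \int_{0}^{\infty}|\nu|(\{|f|>t\})\,dt\leq \|\nu\|_{\mathfrak{M}^{\alpha,s}}\|f\|_{L^{1}(C)},$$
so $L_{\nu}(f):=\int f\,d\nu$ is a bounded linear functional with $\|L_{\nu}\|\leq \|\nu\|_{\mathfrak{M}^{\alpha,s}}$.

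\textbf{From $(L^{1}(C))^{*}$ to $\mathfrak{M}^{\alpha,s}$.} Let $L\in (L^{1}(C))^{*}$. For any $v\in C_{c}(\RR^{n})$ supported in a compact set $K$ with $\|v\|_{\infty}\leq 1$, the layer cake yields $\|v\|_{L^{1}(C)}\leq \text{Cap}_{\alpha,s}(K)$, so the restriction of $L$ to $C_{c}(K)$ is bounded in the uniform norm. The classical Riesz representation theorem, applied along a compact exhaustion of $\RR^{n}$ with compatibility ensured by uniqueness, produces a signed Radon measure $\nu$ such that $L(v)=\int v\,d\nu$ for every $v\in C_{c}(\RR^{n})$. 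To verify $\nu\in \mathfrak{M}^{\alpha,s}$, fix a compact $K$ and an open $G\supset K$, and choose $u\in C_{c}(\RR^{n})$ with $\chi_{K}\leq u\leq 1$ and $\text{supp}(u)\subset G$. Any $v\in C_{c}(\RR^{n})$ with $|v|\leq u$ satisfies $\{|v|>t\}\subset G$ for $t\in (0,1]$, so $\|v\|_{L^{1}(C)}\leq \text{Cap}_{\alpha,s}(G)$. Applying the variational formula then gives
$$|\nu|(K)\leq \int u\,d|\nu|=\sup\{|L(v)|:v\in C_{c}(\RR^{n}),\,|v|\leq u\}\leq \|L\|\,\text{Cap}_{\alpha,s}(G),$$
and outer regularity of $\text{Cap}_{\alpha,s}$ yields $|\nu|(K)\leq \|L\|\,\text{Cap}_{\alpha,s}(K)$, i.e., $\|\nu\|_{\mathfrak{M}^{\alpha,s}}\leq \|L\|$.

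\textbf{Conclusion and main difficulty.} The two estimates force $\|L\|=\|\nu\|_{\mathfrak{M}^{\alpha,s}}$, and uniqueness of $\nu$ is inherited from the Riesz theorem. The representation \eqref{inrep} for a general $f\in L^{1}(C)$ extends from $C_{c}(\RR^{n})$ by density (Proposition \ref{L1}) together with joint continuity of the two sides in the $L^{1}(C)$ quasinorm. The main obstacle is the estimate $|\nu|(K)\leq \|L\|\,\text{Cap}_{\alpha,s}(K)$: one must convert the measure-theoretic quantity $|\nu|(K)$ into a supremum of values of $L$ through the variational identity, control each such value by $\text{Cap}_{\alpha,s}(G)$ using the layer-cake bound on a neighborhood, and then tighten $G$ to $K$ via outer regularity of the capacity.
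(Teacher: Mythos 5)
Your proof is correct and fills in the details of exactly the argument the paper has in mind: it relies on the density of $C_c(\RR^n)$ in $L^1(C)$ (Proposition~\ref{L1}) for the extension step and on the variational formula $\int u\,d|\mu|=\sup\{\int v\,d\mu:v\in C_c,\ |v|\leq u\}$ to convert $|\nu|(K)$ into a supremum of values of $L$, combined with outer regularity of ${\rm Cap}_{\alpha,s}$. The two one-sided estimates give the isometry $\|L\|=\|\nu\|_{\mathfrak{M}^{\alpha,s}}$, and the measurability of $\{|f|>t\}$ with respect to the completion of $|\nu|$ is correctly justified by the observation that ${\rm Cap}_{\alpha,s}$-null sets are $|\nu|$-null, as in the remark following the theorem.
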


\begin{remark} The right-hand side of \eqref{inrep} makes sense since for  $f\in L^1(C)$ and $t\in \RR$, we have that  the set $\{ f>t\}=F\setminus N$ for a $G_\delta$ set $F$ 	and a set $N$ with $\mu(N)={\rm Cap}_{\alpha ,s}(N)=0$. Here $\mu$ should be understood as the completion of $\mu$, and note that if 
${\rm Cap}_{\alpha ,s}(N)=0$ then $N\subset \widetilde{N}$, where  $\widetilde{N}$ is a $G_\delta$ set with ${\rm Cap}_{\alpha ,s}(\widetilde{N})=0$.
	
\end{remark}

\subsection{Banach function spaces}\label{BFS} Most of the spaces under our consideration fit well in the context of Banach function spaces in the sense of \cite{Lux}. In the setting
of $\RR^n$ with Lebesgue measure as the underlying measure, a Banach function space $X$ on $\RR^n$ is the set of all Lebesgue measurable functions $f$ such that 
$\|f\|_{X}:=\rho(|f|)$ is finite. Here $\rho(f)$, $f\geq 0$, is a given metric function ($0\leq \rho(f)\leq \infty$) that obeys the following properties:

\vspace{.1in}
(P1) $\rho(f)=0$ if and only if $f(x)=0$ a.e. in $\RR^n$; $\rho(f_1+f_2)\leq \rho(f_1)+ \rho(f_2)$; and  $\rho( \lambda f)= \lambda \rho(f)$ for any constant $\lambda\geq 0$.

(P2) If $\{f_j\}$, $j=1,2, \dots$, is a sequence of nonnegative measurable functions and $f_j \uparrow f$ a.e. in $\RR^n$, then $\rho(f_j) \uparrow \rho(f).$

(P3) If   $E$ is any bounded  and measurable subset of $\RR^n$, and $\chi_E$ is its characteristic
	function, then $\rho(\chi_E)<+\infty$.

(P4) For every bounded and measurable subset  $E$ of $\RR^n$, there exists a finite constant
$A_E\geq 0$ (depending only on the set $E$) such that $\int_{E} f dx \leq A_E \rho(f)$
for any nonnegative measurable function $f$ in $\RR^n$.	

\vspace{.1in}

It follows from property (P2) that any Banach function space $X$ is complete (see \cite{Lux}). We also have that, for measurable functions $f_1$ and $f_2$, if $|f_1|\leq |f_2|$ a.e. in $\RR^n$ and $f_2 \in X$, then it follows that $f_1\in X$ and $\|f_1\|_X \leq \|f_2\|_X$.

Given  a Banach function space $X$, the K\"othe dual space (or the associate space) to $X$, denoted by $X'$, is the set of all measurable functions $f$ such that $f g \in L^1(\RR^n)$ for all $g \in X$. It turns out that $X'$ is also a Banach function space with the associate metric function $\rho'(f)$, $f\geq 0$, defined by
$$\rho'(f):= \sup\left\{\int |fg| dx: g\in X, \, \|g\|_{X}\leq 1  \right\}.$$ 

By definition, the second associate space $X''$ to $X$ is given by $X''=(X')'$, i.e., $X''$ is the K\"othe dual space to $X'$.  The following theorems are important in the theory of Banach function spaces (see \cite{Lux}).

\begin{theorem}\label{XX''} Every Banach function space $X$ coincides
	with its second associate space $X''$, i.e., $X=X''$ with  equality of norms.	
\end{theorem}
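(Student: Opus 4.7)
The plan is to prove the two inclusions $X \hookrightarrow X''$ and $X'' \hookrightarrow X$, each with norm one; this is the classical Lorentz--Luxemburg theorem. The direction $X \hookrightarrow X''$ is essentially tautological: for $f\in X$ and $g\in X'$ with $\|g\|_{X'}\leq 1$, the very definition of the K\"othe dual norm gives $\int |fg|\,dx \leq \|f\|_X \|g\|_{X'} \leq \|f\|_X$, and taking the supremum over such $g$ yields $\|f\|_{X''}\leq \|f\|_X$. So $X$ embeds contractively into $X''$.

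For the reverse direction $X''\hookrightarrow X$ with $\|f\|_X\leq \|f\|_{X''}$, I would first reduce to nonnegative simple functions of bounded support by invoking the Fatou property (P2). Given $f\in X''$, split into real and imaginary parts and then into positive and negative parts to assume $f\geq 0$. Choose a sequence of nonnegative simple functions $f_n \uparrow f$ with each $f_n$ supported in a set of finite measure; by (P3) each $f_n$ lies in $X$, and by (P2), $\|f_n\|_X \uparrow \|f\|_X$. Since $f_n\leq f$, the evident monotonicity of the K\"othe dual norm gives $\|f_n\|_{X''}\leq \|f\|_{X''}$. So it suffices to prove $\|h\|_X \leq \|h\|_{X''}$ for every nonnegative simple function $h$ supported on a set of finite measure.

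For such an $h$ the claim is equivalent to the duality formula $\|h\|_X = \sup\{\int h g\,dx : g\in X',\ g\geq 0,\ \|g\|_{X'}\leq 1\}$, which I would prove by a Hahn--Banach separation argument. Suppose for contradiction the supremum is $M < \|h\|_X$; then $h$ lies outside the convex set $V = \{u : \|u\|_X \leq M\}$ in the ambient Banach function space $L^1(E)+L^\infty(E)$ on the support $E$ of $h$ (here (P3) and (P4) guarantee that simple functions of bounded support sit in both $X$ and $X'$, so the ambient space is well chosen, and closedness of $V$ follows from (P2) together with a routine dominated-convergence argument). Hahn--Banach produces a continuous linear functional on the ambient space separating $h$ from $V$; the Radon--Nikodym theorem realizes this functional as integration against a measurable $g_0$, and replacing $g_0$ by $|g_0|$ (permissible because the K\"othe dual norm involves absolute values) yields a nonnegative $g_0\in X'$ with $\|g_0\|_{X'}\leq 1$ and $\int h g_0\,dx > M$, contradicting the definition of $M$. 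The main obstacle is the careful control of the norm $\|g_0\|_{X'}$ after passing through Hahn--Banach and Radon--Nikodym; this is where axioms (P3) and (P4) play the essential role of ensuring that the dual of the ambient space embeds continuously into $X'$ with the right quantitative bound.
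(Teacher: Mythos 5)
The paper does not actually prove Theorem \ref{XX''}; it is stated as a known result and cited from Luxemburg's thesis \cite{Lux}, the original source for the Lorentz--Luxemburg theorem, so there is no in-paper argument to compare against. Your reconstruction follows the classical separation proof of that theorem and the plan is sound: the contractive inclusion $X\hookrightarrow X''$ is immediate from H\"older's inequality for Banach function spaces; the Fatou axiom (P2) and the monotonicity of $\|\cdot\|_{X''}$ reduce the reverse inclusion to nonnegative simple functions with bounded support; and the duality formula for such a simple function $h$ is obtained by strict Hahn--Banach separation of $h$ from the ball $V=\{u\ {\rm supported\ on}\ E:\|u\|_X\leq M\}$ inside $L^1(E)$, followed by the identification $(L^1(E))^*=L^\infty(E)$.

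A few details deserve tightening. First, (P3) and (P4) in the paper are stated for \emph{bounded} measurable sets rather than merely finite-measure sets, so the approximating simple functions $f_n\uparrow f$ should be truncated to balls to have bounded support; then $f_n\in X$ by (P3) and $\chi_{{\rm supp}\,f_n}\in X'$ by (P4). Second, the closedness of $V$ in $L^1(E)$ is not really a dominated-convergence fact but a Fatou fact: if $u_k\in V$ and $u_k\to u$ in $L^1(E)$, pass to an a.e.-convergent subsequence and apply (P2) to $v_m:=\inf_{k\geq m}|u_k|\uparrow|u|$ to conclude $\|u\|_X\leq\liminf\|u_k\|_X\leq M$. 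Third, once separation produces $g_0\in L^\infty(E)$ with $\sup_{u\in V}\big|\int u g_0\,dx\big|<\int h g_0\,dx$, one still needs: $\|g_0\|_{X'}\leq\|g_0\|_{L^\infty}\|\chi_E\|_{X'}<\infty$ by (P4); the rearrangement identity $\sup_{u\in V}\big|\int u g_0\,dx\big|=M\|g_0\|_{X'}$ (replace $u$ by $|u|\,{\rm sgn}\,g_0$); and finally the normalization $g:=|g_0|/\|g_0\|_{X'}$, which lies in the unit ball of $X'$ and satisfies $\int hg\,dx>M$, contradicting the definition of $M$. None of these points alters the correctness of your overall approach.
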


\begin{theorem}\label{X*X'} $X^*=X'$ (isometrically) if and only if the space $X$ has an absolutely continuous norm. 
\end{theorem}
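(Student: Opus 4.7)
\medskip

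\noindent\textbf{Proof plan for Theorem \ref{X*X'}.} The embedding $X'\hookrightarrow X^*$ via $g\mapsto L_g$, $L_g(f):=\int fg\,dx$, is isometric for any Banach function space $X$: the inequality $\|L_g\|_{X^*}\leq\|g\|_{X'}$ is the definition of the K\"othe norm, and the reverse inequality follows from Theorem \ref{XX''} since $\|g\|_{X'}=\sup\{|\int fg|:\|f\|_X\leq 1\}$ by $X=X''$. So throughout, the content of the theorem is whether this embedding is surjective.

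\medskip

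\noindent\emph{The direction $(\Leftarrow)$: absolute continuity implies surjectivity.} Given $L\in X^*$, define the set function $\nu(E):=L(\chi_E)$ on bounded measurable sets $E$; this is well defined and finite by (P3). I would first verify that $\nu$ is countably additive: if $E_k\downarrow\emptyset$ with $E_k\subset E_1$ bounded, then $\chi_{E_k}\downarrow 0$ a.e. with $\chi_{E_1}\in X$, so absolute continuity of the norm gives $\|\chi_{E_k}\|_X\to 0$ and hence $\nu(E_k)\to 0$. Since $\nu$ vanishes on Lebesgue-null sets (because $\|\chi_E\|_X=0$ when $|E|=0$), Radon--Nikodym produces a locally integrable $g$ with $\nu(E)=\int_E g\,dx$ on each ball. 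Linearity extends this to $L(\phi)=\int \phi g\,dx$ for all bounded simple $\phi$ with bounded support. To reach arbitrary $f\in X$, approximate by truncations $f_n:=f\,\chi_{\{|f|\leq n,\,|x|\leq n\}}$: the complementary sets decrease to a null set, so absolute continuity gives $\|f-f_n\|_X\to 0$ and hence $L(f_n)\to L(f)$. To conclude that $g\in X'$ with $\|g\|_{X'}\leq\|L\|$, I would first test the integral identity $|\int f_n g|=|L(f_n)|\leq\|L\|\,\|f_n\|_X\leq\|L\|\,\|f\|_X$ on simple functions and invoke Theorem \ref{XX''} ($X=X''$) to upgrade the bound; Fatou plus the isometric embedding then yields $L=L_g$ with $\|g\|_{X'}=\|L\|$.

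\medskip

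\noindent\emph{The direction $(\Rightarrow)$: surjectivity forces absolute continuity.} Argue by contradiction. Suppose there exist $f_0\in X$ with $f_0\geq 0$ and bounded measurable $E_n\downarrow\emptyset$ with $\|f_0\chi_{E_n}\|_X\geq c>0$. Using $X=X''$, for each $n$ I select $g_n\in X'$ with $\|g_n\|_{X'}\leq 1$ and $\int f_0 g_n\chi_{E_n}\,dx\geq c/2$; replacing $g_n$ by $g_n\chi_{E_n}$ keeps this bound and forces $\mathrm{supp}\,g_n\subset E_n$. Then $\tau_n:=L_{g_n}$ lies in the unit ball of $X^*$, $\tau_n(f_0)\geq c/2$, and $\tau_n(h)=0$ for every $h\in X$ supported in $E_n^c$. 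By Banach--Alaoglu weak-$*$ compactness, extract a subnet converging weak-$*$ to some $L\in X^*$. Then $L(f_0)\geq c/2$, while $L(h)=0$ for every $h$ whose support lies in some $E_{n_0}^c$. If $L=L_g$ for some $g\in X'$, then $\int_F g\,dx=0$ for every bounded measurable $F$ contained in some $E_{n_0}^c$; since $\bigcup_n E_n^c$ exhausts $\RR^n$ up to a null set, $g=0$ a.e., contradicting $L(f_0)>0$. Hence $L\in X^*\setminus L_{X'}$, contradicting surjectivity.

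\medskip

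\noindent\emph{Main obstacle.} The delicate step is the converse direction. Two points require care: (i) the reduction to $g_n$ supported in $E_n$ must not destroy the $X'$-norm bound, which uses the pointwise ideal property of $X'$; (ii) passing to a weak-$*$ limit of $\tau_n$ requires Banach--Alaoglu with a net argument (since $X$ need not be separable), and one must verify that the key lower bound $L(f_0)\geq c/2$ and the vanishing property $L(h)=0$ for $h$ with $\mathrm{supp}(h)\subset E_{n_0}^c$ are both preserved in the limit. The forward direction is more routine, with the Radon--Nikodym step and the truncation argument being the only nontrivial pieces; everything else proceeds from the axioms (P1)--(P4) and the Lorentz--Luxemburg identity $X=X''$.
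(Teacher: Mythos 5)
The paper does not prove this theorem: it is stated as a classical fact with a citation to Luxemburg's thesis \cite{Lux}, so there is no in-paper argument to compare against. Your proposal is essentially a correct reconstruction of the standard argument, and the structure (isometric embedding $X'\hookrightarrow X^*$; Radon--Nikodym for $(\Leftarrow)$; a non-representable functional for $(\Rightarrow)$) is sound. Two small points that would need tightening if this were written out in full.

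\emph{On $(\Leftarrow)$:} the estimate $|\int f_n g|\leq\|L\|\|f\|_X$ obtained by testing $L$ on $f_n$ only bounds a signed integral; to show $g\in X'$ you must instead test on $\tilde f_n:=|f_n|\,\mathrm{sgn}(g)$ (admissible since $\|\tilde f_n\|_X=\|f_n\|_X$ by the lattice property of $X$), which gives $\int|f_n||g|\,dx\leq\|L\|\|f\|_X$, and then Fatou yields $\|g\|_{X'}\leq\|L\|$. Your sketch conflates these; the fix is one line but it matters, since without passing to $\tilde f_n$ the conclusion does not follow from the displayed inequality.

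\emph{On $(\Rightarrow)$:} your restriction to \emph{bounded} sets $E_n$ with $E_n\downarrow\emptyset$ only captures the failure of condition (a) in the paper's definition of absolute continuity; it silently omits condition (b) (escape to infinity, $E_n=\RR^n\setminus B_n(0)$). Nothing in your argument actually uses boundedness of $E_n$, so dropping that hypothesis repairs the gap. I will also note that your use of weak-$*$ compactness (Banach--Alaoglu with a net) is heavier machinery than is customary here. The classical route is shorter: if $X^*=X'$ and $f_n\downarrow 0$ a.e. with $f_n\in X$, then $\int f_n g\,dx\to 0$ for every $g\in X'$ by dominated convergence, so $f_n\to 0$ weakly in $X$; by Mazur's lemma some convex combinations $h_k\in\mathrm{conv}\{f_n:n\geq k\}$ converge to $0$ in norm, and since the $f_n$ are decreasing and nonnegative, each $h_k$ dominates $f_{N(k)}$ for some $N(k)\to\infty$, whence the lattice property of $X$ gives $\|f_{N(k)}\|_X\leq\|h_k\|_X\to 0$, forcing $\|f_n\|_X\downarrow 0$. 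This avoids nets and weak-$*$ limits entirely and is the argument I would expect to see in \cite{Lux} or in Bennett--Sharpley. Your compactness approach is valid, but it buys you nothing over Mazur and requires more care about nets and cofinality.
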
 
 
Here we say that $X$ has an absolutely continuous norm if the following properties are satisfied for any $f\in X$:

(a) If $E$ is a bounded set of $\RR^n$ and $E_j$ are measurable subsets of $E$ such that $|E_j|\rightarrow 0$ as $j\rightarrow \infty$, the $\|f\chi_{E_j}\|_{X}\rightarrow0$ as $j\rightarrow \infty$.

(b) $ \|f \chi_{\RR^n\setminus B_j(0)} \|_{X} \rightarrow 0$ as $j\rightarrow \infty$.

It is known that  $X$ has an absolutely continuous norm
if  and only if any  sequence $f_j\in X$ such that $|f_j| \downarrow 0$
a.e. in $\RR^n$ has the property that $\| f_j\|_X \downarrow 0$ (see \cite[page 14]{Lux}).  
 
It is easy to see from the definition that the space $M^{\alpha,s}_p$, $\alpha>0, s>1, p>1$, is a Banach function space in $\RR^n$ and so is its K\"othe dual space $(M^{\alpha,s}_p)'$.

We will now follow an idea in \cite[Proposition 2.11]{KV} and use  the $p$-convexity of $M^{\alpha,s}_p$ to show that $(M^{\alpha,s}_p)'$ is actually a predual space of   $M^{\alpha,s}_p$.

\begin{proposition}\label{M'*} We have $[(M^{\alpha,s}_p)']^*=M^{\alpha,s}_p$ (isometrically) for any $\alpha>0, s>1, p>1$.
\end{proposition}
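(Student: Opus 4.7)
The plan is to reduce, via Theorems \ref{XX''} and \ref{X*X'}, to the single statement that the K\"othe dual $(M_p^{\alpha,s})'$ has absolutely continuous norm. Indeed, Theorem \ref{XX''} gives $(M_p^{\alpha,s})'' = M_p^{\alpha,s}$ isometrically, while Theorem \ref{X*X'} applied to the Banach function space $Y=(M_p^{\alpha,s})'$ yields $Y^*=Y'$ isometrically once $Y$ has absolutely continuous norm. Concatenating these two identifications gives $[(M_p^{\alpha,s})']^* = [(M_p^{\alpha,s})']' = (M_p^{\alpha,s})'' = M_p^{\alpha,s}$ with equality of norms.

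The key input is the $p$-convexity of $M_p^{\alpha,s}$ with constant $1$, which is immediate from the capacity-based norm: for any finite family $f_1,\dots,f_N$,
\begin{align*}
\left\|\left(\sum_{i=1}^{N} |f_i|^p\right)^{1/p}\right\|_{M_p^{\alpha,s}}^p
= \sup_{K} \frac{\sum_{i=1}^{N} \int_K |f_i|^p\,dx}{\text{Cap}_{\alpha,s}(K)}
\leq \sum_{i=1}^{N} \|f_i\|_{M_p^{\alpha,s}}^p.
\end{align*}
Using the characterization of absolute continuity via $|f_j|\downarrow 0 \Rightarrow \|f_j\|_Y\downarrow 0$ recalled after Theorem \ref{X*X'}, I would argue by contradiction and disjointification. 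Suppose $g\in(M_p^{\alpha,s})'$, $g\geq 0$, and $E_j\downarrow\emptyset$ satisfy $\|g\chi_{E_j}\|_{(M_p^{\alpha,s})'}\geq\epsilon$ for all $j$. For each $j$, pick $h_j\geq 0$ in $M_p^{\alpha,s}$ with $\|h_j\|_{M_p^{\alpha,s}}\leq 1$ and $\int_{E_j} g h_j\,dx\geq \epsilon/2$. Since $g h_j\in L^1$ and $\chi_{E_k}\downarrow 0$ a.e., dominated convergence produces $k(j)>j$ with $\int_{E_{k(j)}} g h_j\,dx<\epsilon/4$. Iterating from $j_1=1$ with $j_{i+1}=k(j_i)$, the slabs $\Delta_i:=E_{j_i}\setminus E_{j_{i+1}}$ are pairwise disjoint and $\int_{\Delta_i} g h_{j_i}\,dx\geq\epsilon/4$. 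Setting $u_i:=h_{j_i}\chi_{\Delta_i}$, disjointness of supports combined with the displayed $p$-convexity gives
\begin{align*}
\left\|\sum_{i=1}^N u_i\right\|_{M_p^{\alpha,s}}
=\left\|\left(\sum_{i=1}^N u_i^p\right)^{1/p}\right\|_{M_p^{\alpha,s}}
\leq N^{1/p},
\end{align*}
whereas $\int g\sum_{i=1}^N u_i\,dx\geq N\epsilon/4$. Hence $\|g\|_{(M_p^{\alpha,s})'}\geq (\epsilon/4)\,N^{1/p'}\to\infty$ since $p'<\infty$, contradicting $g\in (M_p^{\alpha,s})'$.

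The main obstacle is precisely the simultaneous-quantifier issue in absolute continuity: one cannot apply dominated convergence to $\int g_j h_j\,dx$ directly because the test function $h_j$ also varies with $j$. The disjointification trick, which exploits the $L^1$-integrability of each fixed $gh_j$ to trim $E_{j_i}$ to a slab $\Delta_i$ while preserving a definite fraction of the mass, is what converts a uniform lower bound on shrinking sets into an unbounded linear functional against the single function $g$; the $p$-convexity of $M_p^{\alpha,s}$ (which crucially uses $p>1$ to make $N^{1/p'}$ blow up) is what forces the contradiction.
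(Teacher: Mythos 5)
Your proposal is correct, and it shares the paper's top-level architecture: reduce the statement, via Theorems \ref{XX''} and \ref{X*X'}, to showing that the K\"othe dual $(M_p^{\alpha,s})'$ has absolutely continuous norm, with the $p$-convexity of $M_p^{\alpha,s}$ with constant $1$ as the essential structural input. Where you diverge is in how that absolute continuity is established. The paper dualizes: from $p$-convexity of $M_p^{\alpha,s}$ it deduces, using $\ell^{p'}\bigl((M_p^{\alpha,s})^*\bigr)=\ell^{p}(M_p^{\alpha,s})^*$ and H\"older, that $(M_p^{\alpha,s})'$ is $p'$-concave with constant $1$, and then cites \cite[Proposition 1.a.7]{LT} for the general fact that a $q$-concave Banach function space with $q<\infty$ has order (hence absolutely) continuous norm. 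You instead give a direct, self-contained disjointification argument: from a putative violation of absolute continuity you extract disjoint slabs $\Delta_i$, build disjointly supported test functions $u_i=h_{j_i}\chi_{\Delta_i}$ of $M_p^{\alpha,s}$-norm at most one, and exploit $\bigl\|\sum_{i=1}^N u_i\bigr\|_{M_p^{\alpha,s}}\le N^{1/p}$ to force $\|g\|_{(M_p^{\alpha,s})'}\gtrsim N^{1/p'}\to\infty$. This is essentially the proof of the Lindenstrauss--Tzafriri proposition unwound in this setting, so your route is more elementary at the cost of a bit more work; what the paper's route buys is brevity by outsourcing that step. One small point you leave implicit: the criterion recalled after Theorem \ref{X*X'} concerns an arbitrary sequence $|f_j|\downarrow 0$, whereas you treat only $f_j=g\chi_{E_j}$ with a fixed $g$ and nested $E_j\downarrow\emptyset$; the reduction (take $g=|f_1|$ and $E_j=\{|f_j|>\delta|f_1|\}$ for small $\delta$, discarding a null set) is routine but should be stated.
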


\begin{proof}  It is obvious that $M^{\alpha,s}_p$ is $p$-convex with $p$-convexity constant $1$, i.e., for every choice of $m$ functions $\{f_i\}_{i=1}^m$ in  
$M^{\alpha,s}_p$, we have 
\begin{equation}\label{p-convex}
\norm{\Big(\sum_{i=1}^m |f_i|^p\Big)^{\frac{1}{p}}}_{M^{\alpha,s}_p} \leq \Big( \sum_{i=1}^m \norm{f_i}_{M^{\alpha,s}_p}^p\Big)^{\frac{1}{p}}.
\end{equation}

Now using the fact that $\ell^{p'}((M^{\alpha,s}_p)^*)=\ell^{p}(M^{\alpha,s}_p)^*$ we have for any choice of $m$ functions $\{g_i\}_{i=1}^m$ in  
$(M^{\alpha,s}_p)'$,
\begin{align*}
\Big(\sum_{i=1}^m & \norm{g_i}^{p'}_{(M^{\alpha,s}_p)'} \Big)^{\frac{1}{p'}}=\Big(\sum_{i=1}^m \norm{g_i}^{p'}_{(M^{\alpha,s}_p)^*} \Big)^{\frac{1}{p'}}\\
& = \sup_{\norm{\{f_i\}}_{\ell^p(M^{\alpha,s}_p)}\leq 1} \sum_{i=1}^m \int f_i(x) g_i(x) dx\\
& \leq \sup_{\norm{\{f_i\}}_{\ell^p(M^{\alpha,s}_p)}\leq 1}  \int \Big(\sum_{i=1}^m |f_i(x)|^{p}\Big)^{\frac{1}{p}}  \Big(\sum_{i=1}^m |g_i(x)|^{p'}\Big)^{\frac{1}{p'}} dx\\
& \leq \sup_{\norm{\{f_i\}}_{\ell^p(M^{\alpha,s}_p)}\leq 1} \norm{\Big(\sum_{i=1}^m |f_i|^{p}\Big)^{\frac{1}{p}}}_{M^{\alpha,s}_p} \norm{\Big(\sum_{i=1}^m |g_i|^{p'}\Big)^{\frac{1}{p'}}}_{(M^{\alpha,s}_p)'}.
\end{align*}

Thus in view of \eqref{p-convex} we see that $(M^{\alpha,s}_p)'$ is $p'$-concave with $p'$-concavity constant $1$, i.e., 
\begin{align*}
\Big(\sum_{i=1}^m  \norm{g_i}^{p'}_{(M^{\alpha,s}_p)'} \Big)^{\frac{1}{p'}} \leq \norm{\Big(\sum_{i=1}^m |g_i|^{p'}\Big)^{\frac{1}{p'}}}_{(M^{\alpha,s}_p)'}.
\end{align*}

Then by \cite[Proposition 1.a.7]{LT} the space $(M^{\alpha,s}_p)'$ must have  an absolutely continuous norm. Hence, Theorems \ref{XX''} and \ref{X*X'}  yield that 
$$[(M^{\alpha,s}_p)']^*= (M^{\alpha,s}_p)''=M^{\alpha,s}_p,$$
as desired.
\end{proof}

\begin{remark} The proof shows that $(M^{\alpha,s}_p)'$ has an absolutely continuous norm and thus it is a separable Banach  space (see \cite{Lux}).
\end{remark}

\begin{remark} We notice that by \cite[Theorem 1.2]{MV} it holds that  
	$$\|f\|_{M^{\alpha,s}_p} \simeq \left\|\frac{G_\alpha*[G_\alpha*(|f|^p)]^{s'}}{G_\alpha*(|f|^p)}\right\|_{L^\infty(\{G_\alpha*(|f|^p)>0\})}^{\frac{1}{p(s'-1)}}.$$

Thus it follows from \cite[Proposition 2.9]{KV} and \cite[Theorem 2.10]{KV} that both $|f|^{\frac{p}{s'}}$ and $G_\alpha*(|f|^p)$ belong to $\mathcal{Z}$.
Here $\mathcal{Z}=\mathcal{Z}_{s'}$ is the space of measurable functions $h$ such that the integral equation 
$$u=G_\alpha*(u^{s'}) + \epsilon |h|\qquad \text{a.e.}$$
has a nonnegative solution $u\in L_{\rm loc}^{s'}(\RR^n)$ for some $\epsilon>0$.   A norm for $\mathcal{Z}$ can be defined by
$$\|h\|_{\mathcal{Z}}=\inf\{t>0: G_\alpha* |h|^{s'} \leq t^{s'-1} |h| \quad \text{a.e.} \}=\left\|\frac{G_\alpha*|h|^{s'}}{|h|}\right\|_{L^\infty(\{|h|>0\})}^{\frac{1}{s'-1}}$$
(see \cite[page 3455]{KV}). Moreover, by \cite[Theorem 2.10]{KV} we have 
$$\| |f|^{\frac{p}{s'}}\|_{\mathcal{Z}} \simeq \| G_\alpha*(|f|^{p})\|_{\mathcal{Z}}^{\frac{1}{s'}}.$$

With these observations, we see that a function $f\in M^{\alpha,s}_p$ if and only if $|f|^{\frac{p}{s'}}\in \mathcal{Z}$ and 
 	$$\left\| |f|^{\frac{p}{s'}}\right\|^{\frac{s'}{p}}_{\mathcal{Z}} \simeq \|f\|_{M^{\alpha,s}_p}.$$
 	
In particular, for $p=s'$, we have $\mathcal{Z}\approx M^{\alpha,s}_{s'}$ and   	$\mathcal{Z}' \approx (M^{\alpha,s}_{s'})'$. Thus by \cite[Theorem 2.12]{KV} it holds that
$$\|f\|_{(M^{\alpha,s}_{s'})'}\simeq \inf\Big\{ \int h^s (G_\alpha* h)^{1-s} dx: h\geq |f| \quad \text{a.e.} \Big\}.$$

However, this interesting equivalence will not be used in this paper.
\end{remark}

\section{Proof of Theorems \ref{firsttheorem}, \ref{second}, and \ref{NoA1Th}}

To prove Theorem \ref{firsttheorem} we need the following preliminary results. A homogeneous version of the next theorem can be found in \cite{MS1}. But our approach here is different   from that of \cite{MS1} at least in the case $s>2-\alpha/n$.

\begin{theorem}\label{mainA1loc} Let $s>1$, $\alpha>0$, and $\alpha s\leq n$. If $t\in (1, n/(n-\alpha))$ for $s\leq 2-\alpha/n$ and $t\in (1, n(s-1)/(n-\alpha s))$ 
	for $s>2-\alpha/n$, then  for any nonnegative measure $\mu$ and $V=G_\alpha*(G_\alpha*\mu)^{\frac{1}{s-1}}$ we have 
	\begin{equation}\label{AV}
	{\bf M}^{\rm loc}(V^t)(x_0)\leq A V^t(x_0), \qquad \forall x_0\in \RR^n,
	\end{equation}
	where $A$ is a constant independent of $\mu$. 	
\end{theorem}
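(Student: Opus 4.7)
The plan is to fix $x_0 \in \RR^n$ and $r \in (0,1]$ and prove the uniform estimate $\frac{1}{|B_r(x_0)|}\int_{B_r(x_0)} V(y)^t\, dy \leq A\, V(x_0)^t$. Setting $W := G_\alpha * \mu$, so that $V = G_\alpha * W^{1/(s-1)}$, I would decompose $V = V^{\rm near} + V^{\rm far}$ by splitting the outer convolution according to whether $z \in B_{4r}(x_0)$ or not, i.e.\ $V^{\rm near}(y) := \int_{B_{4r}(x_0)} G_\alpha(y-z)\, W(z)^{1/(s-1)}\, dz$.

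The far part is the easier half. For $y \in B_r(x_0)$ and $|z-x_0| \geq 4r$ one has $|y-z| \simeq |x_0-z|$ and $\bigl||y-z|-|x_0-z|\bigr| \leq 1$, so combining the local behavior $G_\alpha(x) \simeq |x|^{\alpha-n}$ for $|x| \leq 2$ with the exponential decay of $G_\alpha$ at infinity yields $G_\alpha(y-z) \leq C\, G_\alpha(x_0-z)$ uniformly. Thus $V^{\rm far}(y) \leq C\, V(x_0)$ pointwise on $B_r(x_0)$, which trivially gives the desired bound for its contribution.

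For the near part I would split into two regimes according to the sign of $s - (2-\alpha/n)$. In the subcritical range $s \leq 2 - \alpha/n$, the hypothesis $t < n/(n-\alpha)$ is exactly the range in which the (Bessel) Muckenhoupt--Wheeden theorem gives $(G_\alpha * h)^t \in A_1^{\rm loc}$ uniformly in $h \geq 0$; applying this with $h = W^{1/(s-1)}\chi_{B_{4r}(x_0)}$ yields $\frac{1}{|B_r(x_0)|}\int_{B_r(x_0)} V^{\rm near}(y)^t\, dy \leq C\, V^{\rm near}(x_0)^t \leq C\, V(x_0)^t$, which closes this case.

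When $s > 2 - \alpha/n$, the admissible range $t < n(s-1)/(n-\alpha s)$ strictly exceeds $n/(n-\alpha)$, so a direct Bessel Muckenhoupt--Wheeden on $V$ seen as a single-layer potential is no longer enough and I would exploit the iterated structure via the localized Wolff potential $\mathbf{W}_{\alpha,s}^{\rm loc}\nu(y) := \int_0^1 \bigl(\nu(B_\rho(y))/\rho^{n-\alpha s}\bigr)^{1/(s-1)}\, d\rho/\rho$. The Havin--Maz'ya pointwise estimates, in their local Bessel form, would supply the upper bound $V^{\rm near}(y) \leq C\, \mathbf{W}_{\alpha,s}^{\rm loc}(\mu|_{B_{8r}(x_0)})(y)$ on $B_r(x_0)$ together with the matching lower bound $V(x_0) \geq c\, \mathbf{W}_{\alpha,s}^{\rm loc}\mu(x_0)$. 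The problem then reduces to showing that $(\mathbf{W}_{\alpha,s}^{\rm loc}\nu)^t$ is a local $A_1$ weight with constant independent of $\nu$ in this range of $t$; the borderline is dictated by the model scaling $\mathbf{W}_{\alpha,s}^{\rm loc}\delta_{x_0}(y) \sim |y-x_0|^{-(n-\alpha s)/(s-1)}$ near $x_0$, which is locally $A_1$ precisely when $t(n-\alpha s)/(s-1) < n$. The main obstacle is this Wolff-potential $A_1^{\rm loc}$ bound with sharp exponent, together with the localized Havin--Maz'ya comparison handled with the correct truncation at $\rho = 1$ dictated by the exponential tail of $G_\alpha$; once these two ingredients are in place the rest of the proof assembles without difficulty.
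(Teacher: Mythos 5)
Your first case ($s \leq 2-\alpha/n$, $t < n/(n-\alpha)$) is in line with the paper: the paper proves the same bound by Minkowski's inequality and the observation that ${\bf M}^{\rm loc}(G_\alpha^t(\cdot - z)) \leq C\,G_\alpha^t(\cdot - z)$, which is precisely the ``local Bessel Muckenhoupt--Wheeden'' fact you invoke, applied with $h = (G_\alpha*\mu)^{1/(s-1)}$. The far part of your outer split is also fine: the pointwise comparison $G_\alpha(y-z)\leq C\,G_\alpha(x_0-z)$ on $B_r(x_0)\times B_{4r}(x_0)^c$ does hold by combining the local power-law behavior of $G_\alpha$ with the tail monotonicity $G_\alpha(x)\leq c\,G_\alpha(x+y)$ for $|x|\geq 2$, $|y|\leq 1$, and is essentially what the paper does for its $V_1$.

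The gap is in the near part when $s>2-\alpha/n$. You claim $V^{\rm near}(y)\leq C\,\mathbf{W}_{\alpha,s}^{\rm loc}\bigl(\mu|_{B_{8r}(x_0)}\bigr)(y)$ on $B_r(x_0)$ as a local Havin--Maz'ya estimate, but this inequality is false as stated. The quantity $V^{\rm near}(y)=\int_{B_{4r}(x_0)} G_\alpha(y-z)\,(G_\alpha*\mu)(z)^{1/(s-1)}\,dz$ depends on the \emph{inner} potential $G_\alpha*\mu$ evaluated at points $z$ near $x_0$, and that inner potential sees the entire measure $\mu$, not just its restriction near $x_0$. Concretely, take $\mu=\delta_{z_0}$ with $|z_0-x_0|=100r$ and $r$ small: then $\mu|_{B_{8r}(x_0)}=0$ so your right-hand side vanishes identically, yet $V^{\rm near}(y)>0$ for $y\in B_r(x_0)$. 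The truncated Wolff potential of the restricted measure cannot dominate $V^{\rm near}$, and a decomposition of the outer integral alone, at scale $4r$, is not sufficient. What is missing is a second decomposition, this time of the \emph{inner} integral $G_\alpha*\mu$ into a near piece and a far piece (the paper splits at fixed scales $3$ and $5$, not $r$-dependent ones). The far-inner contribution is controlled pointwise on $B_1(x_0)$ by the tail estimate for $G_\alpha$, yielding a direct bound by $V(x_0)$; only the near-near contribution is compared with the localized Wolff potential $\int_0^{100}\bigl(\mu(B_\rho(x))/\rho^{n-\alpha s}\bigr)^{1/(s-1)}\,d\rho/\rho$, and the averaging of its $t$-th power is then carried out by splitting the $\rho$-integral at $\rho=r$. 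Without this extra inner decomposition, the reduction you propose does not go through, so the Havin--Maz'ya step must be reworked before the ``two main obstacles'' you identify can be assembled into a proof.
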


\begin{proof} We shall use the following properties of $G_\alpha$ (see \cite[Sect. 1.2.4]{AH}):
	\begin{equation}\label{G1}
	G_\alpha (x) \simeq |x|^{\alpha-n}, \qquad \forall |x|\leq 4, \qquad  0<\alpha<n, 
	\end{equation}	
	and 
	\begin{equation}\label{G2}
	G_\alpha (x) \leq c\,  G_\alpha(x+y), \qquad \forall |x|\geq 2, \qquad |y|\leq 1. 
	\end{equation}		
	
	Note that \eqref{G1} and \eqref{G2} yield that for any $t\in (1, n/(n-\alpha))$ we have 
	\begin{equation}\label{AG}
	{\bf M}^{\rm loc}(G^t_\alpha(\cdot-z))(x)\leq C\, G^t_\alpha(x-z), \qquad \forall x, z\in \RR^n,
	\end{equation}
	where $C$ is independent of $x$ and $z$.  This can be verified by inspecting the case $x\in B_{3}(z)$ and the case $x\not \in B_{3}(z)$ separately.

	First we consider the case $s\leq 2-\alpha/n$ and $t\in (1, n/(n-\alpha))$. 
	By Minkowski's inequality and \eqref{AG} we have, for $x_0\in \RR^n$ and $r\in(0,1]$, 
	\begin{align*}
	\fint_{B_r(x_0)} V^t(y) dy &=\fint_{B_r(x_0)} \left[\int_{\RR^n} G_\alpha(y-z) (G_\alpha * \mu)(z)^{\frac{1}{s-1}} dz\right]^{t} dy\\
	& \leq  \left[\int_{\RR^n}  (G_\alpha * \mu)(z)^{\frac{1}{s-1}}  \left( \fint_{B_r(x_0)} G^t_\alpha(y-z) dy\right)^{\frac{1}{t}}  dz\right]^{t} \\
	& \leq C\, \left[\int_{\RR^n}  (G_\alpha * \mu)(z)^{\frac{1}{s-1}}  G_\alpha(x_0-z)  dz\right]^{t} \\
	&= C\, V^t(x_0).
	\end{align*}
	
	Thus we get \eqref{AV} when  $t\in (1, n/(n-\alpha))$ and $s\leq 2-\alpha/n$. In fact, the proof is valid for all $s>1$.	
	
	We now consider the case $s> 2-\alpha/n$ and $t\in (1, n(s-1)/(n-\alpha s))$.  By H\"older's inequality we may assume that $t>s-1$. 	
	Let $x_0\in \RR^n$ and $r\in(0,1]$.
	We write
	$$V(x)=V_1(x)+V_2(x),$$
	where 
	$$V_1(x)=\int_{|y-x_0|>3} G_\alpha(x-y)\varphi(y) dy,$$
	$$V_2(x)=\int_{|y-x_0|\leq 3} G_\alpha(x-y)\varphi(y) dy,$$	
	with
	$$\varphi(y)=(G_\alpha*\mu(y))^{\frac{1}{s-1}}.$$
	
	Observe that for $|x-x_0|\leq 1$ and $|y-x_0|>3$, it holds that 
	\begin{equation}\label{V1}
	G_\alpha(x-y) \leq A G_\alpha (x_0-y).
	\end{equation}
	
	Indeed, since $|x-y|\geq |y-x_0|-|x-x_0|\geq 3-1=2$ and $|x-x_0|\leq 1$, by \eqref{G2} we find
	$$G_\alpha(x-y) \leq A G_\alpha (x-y+x_0-x)=  A G_\alpha (x_0-y).$$

	Now by \eqref{V1} we have 
	$$V_1(x)\leq A \int_{\RR^n} G_\alpha(x_0-y)\varphi(y) dy =AV(x_0)$$
	for all $|x-x_0|\leq 1$. This yields
	\begin{equation}\label{E1}
	\fint_{B_r(x_0)} V^t_1(x) dx \leq A V^t(x_0).
	\end{equation}
	
	As for $V_2$ we write
	$$V_2(x)=\int_{|x-u-x_0|\leq 3} G_\alpha(u)\varphi(x-u) du\leq c [V_{21}(x) + V_{22}(x)],$$
	where 
	$$V_{21}(x)=\int_{|x-u-x_0|\leq 3} G_\alpha(u)\varphi_1(x-u) du,$$
	$$V_{22}(x)=\int_{|x-u-x_0|\leq 3} G_\alpha(u)\varphi_2(x-u) du,$$
	with 
	$$\varphi_1(x-u)=\left(\int_{|z-x_0|>5} G_\alpha(x-u-z) d \mu(z)\right)^{\frac{1}{s-1}},$$
	$$\varphi_2(x-u)=\left(\int_{|z-x_0|\leq 5} G_\alpha(x-u-z) d \mu(z)\right)^{\frac{1}{s-1}}.$$	
	
	Using \eqref{G2}, for $|z-x_0|>5$, $|x-u-x_0|\leq3$, and $|x-x_0|\leq 1$, we have 
	$$G_\alpha(x-u-z) \leq A G_\alpha(x-u-z-x+x_0)=A G_\alpha(x_0-u-z).$$
	
	Thus for such $x$ and $u$ it follows that 
	$$\varphi_1(x-u)\leq A \left[\int_{\RR^n} G_\alpha(x_0-u-z) d\mu(z)\right]^{\frac{1}{s-1}}= A \varphi(x_0-u). $$
	
	Hence,
	\begin{align*}
	V_{21}(x)&\leq A \int_{|x-u-x_0|\leq 3} G_\alpha(u)\varphi(x_0-u) du\\
	& \leq A \int_{\RR^n}G_\alpha(x_0-y)\varphi(y) dy\\
	&= A V(x_0).
	\end{align*}
	
	As this holds for all $|x-x_0|\leq 1$ we deduce that 
	\begin{equation}\label{E2}
	\fint_{B_r(x_0)} V^t_{21}(x) dx \leq A V^t(x_0).
	\end{equation}
	
	It is  now left to estimate $V_{22}$. Note that for $|x-x_0| \leq 1$,
	$$V_{22}(x)=\int_{|y-x_0|\leq 3}G_\alpha(x-y)\varphi_2(y) dy\leq \int_{|y-x|\leq 4}G_\alpha(x-y)\varphi_2(y) dy,$$
	and thus by \eqref{G1} we have 
	\begin{equation}\label{V221}
	V_{22}(x)\leq C \int_{0}^5 \frac{\int_{B_\rho(x)} \varphi_2(y) dy}{\rho^{n-\alpha}}\frac{d\rho}{\rho}.
	\end{equation}

	We next claim   that for $|x-x_0| \leq 1$,
	\begin{equation}\label{VJ}
	V_{22}(x)\leq C \int_{0}^{100} \left(\frac{\mu(B_\rho(x))}{\rho^{n-\alpha s}} \right)^{\frac{1}{s-1}}\frac{d\rho}{\rho}.
	\end{equation}
	
	Assuming \eqref{VJ}, we have 
	\begin{equation}\label{Q1Q2}
	\fint_{B_r(x_0)} V^t_{22}(x) dx \leq C (Q_1+Q_2),   
	\end{equation}
	where 
	$$Q_1=\fint_{B_r(x_0)} \left(\int_{0}^{r} \left(\frac{\mu(B_\rho(x))}{\rho^{n-\alpha s}} \right)^{\frac{1}{s-1}}\frac{d\rho}{\rho}\right)^t dx,$$
	$$Q_2= \fint_{B_r(x_0)}\left(\int_{r}^{100} \left(\frac{\mu(B_\rho(x))}{\rho^{n-\alpha s}} \right)^{\frac{1}{s-1}}\frac{d\rho}{\rho}\right)^t dx. $$
	
	For $Q_2$ we observe that if $x\in B_r(x_0)$ and $\rho\geq r$ then $B_\rho(x)\subset B_{2\rho}(x_0)$, and so 
	$$Q_2\leq  \fint_{B_r(x_0)}\left(\int_{r}^{100} \left(\frac{\mu(B_{2\rho}(x_0))}{\rho^{n-\alpha s}} \right)^{\frac{1}{s-1}}\frac{d\rho}{\rho}\right)^t dx\leq C V^t(x_0), $$
	where we used \cite[Theorem 2]{Ad1} in the last inequality.
	
	For $Q_1$, we first bound, with $x\in B_r(x_0)$ and $\epsilon>0$,
	\begin{align*}
	\left(\int_{0}^{r} \left(\frac{\mu(B_\rho(x))}{\rho^{n-\alpha s}} \right)^{\frac{1}{s-1}}\frac{d\rho}{\rho}\right)^t &\leq c\,  r^{\frac{\epsilon t}{s-1}} \sup_{0<\rho<r} 
	\left(\frac{\mu(B_\rho(x))}{\rho^{n-\alpha s +\epsilon}} \right)^{\frac{t}{s-1}} \\
	&\leq c\, r^{\frac{\epsilon t}{s-1}} \sup_{0<\rho<r}
	\left(\int_{|x-z|<\rho} \frac{d\mu(z)}{|x-z|^{n-\alpha s +\epsilon}} \right)^{\frac{t}{s-1}}\\
	&\leq c\, r^{\frac{\epsilon t}{s-1}}  \left(\int_{|x_0-z|<2r} \frac{d\mu(z)}{|x-z|^{n-\alpha s +\epsilon}} \right)^{\frac{t}{s-1}}.
	\end{align*}
	
	This and Minkowski's inequality (recall that $t>s-1$) yield 
	\begin{align*}
	Q_1 &\leq c\, r^{\frac{\epsilon t}{s-1}}  \fint_{B_r(x_0)} \left(\int_{|x_0-z|<2r} \frac{d\mu(z)}{|x-z|^{n-\alpha s +\epsilon}} \right)^{\frac{t}{s-1}} dx \\
	&\leq c\, r^{\frac{\epsilon t}{s-1}} \left[\int_{|x_0-z|<2r} d\mu(z) \left(\fint_{B_r(x_0)} \frac{dx}{|x-z|^{(n-\alpha s+\epsilon)\frac{t}{s-1}}}\right)^{\frac{s-1}{t}}  \right]^{\frac{t}{s-1}}.
	\end{align*}
	
	We now choose an $\epsilon>0$ such that  $(n-\alpha s+\epsilon)\frac{t}{s-1}<n$, which is possible since 
	$$\frac{n(s-1)}{t} > n-\alpha s.$$
	
	Then by  simple calculations and  \cite[Theorem 2]{Ad1} we arrive at 
	$$Q_1\leq c\, \left(\frac{\mu(B_{2r}(x_0))}{r^{n-\alpha s}}\right)^{\frac{t}{s-1}} \leq c\,
	\left(\int_{0}^{4} \left(\frac{\mu(B_{\rho}(x_0))}{\rho^{n-\alpha s}} \right)^{\frac{1}{s-1}}\frac{d\rho}{\rho}\right)^t dx\leq C V^t(x_0).$$

	Thus in view of \eqref{Q1Q2} and the above estimates for $Q_1$ and $Q_2$ we get 
	\begin{equation}\label{E3}
	\fint_{B_r(x_0)} V^t_{22}(x) dx \leq A V^t(x_0). 
	\end{equation}
	
	Estimates \eqref{E1}, \eqref{E2}, and \eqref{E3} yield the bound \eqref{AV} as desired.

	Therefore, what's left now is to verify inequality \eqref{VJ}. In view of \eqref{V221} we need to estimate 
	$\int_{B_\rho(x)} \varphi_2(y) dy$.
	To this end, we first observe that for $y\in B_\rho(x)$, $\rho\in(0,5]$, by \eqref{G1} it holds that 
	\begin{align*}
	\varphi_2(y)&=\left(\int_{|z-x_0|\leq 5} G_\alpha(y-z) d \mu(z)\right)^{\frac{1}{s-1}}\\
	&\leq \left(\int_{|z-y|\leq 11} G_\alpha(y-z) d \mu(z)\right)^{\frac{1}{s-1}}\\
	&\leq C \left(\int_{|z-y|\leq 11} \frac{1}{|y-z|^{n-\alpha}} d \mu(z)\right)^{\frac{1}{s-1}}.
	\end{align*}	
	
	Thus for $0<\rho\leq 5$ we have 
	\begin{equation}\label{V222}
	\int_{B_\rho(x)} \varphi_2(y) dy \leq C (I_1 +I_2), 
	\end{equation}
	
	where 
	$$I_1=\int_{B_\rho(x)} \left(\int_{|z-y| <\rho} \frac{1}{|y-z|^{n-\alpha}} d \mu(z)\right)^{\frac{1}{s-1}} dy,$$
	and
	\begin{align}\label{I2}
	I_2 &=\int_{B_\rho(x)} \left(\int_{\rho\leq |z-y|\leq 11} \frac{1}{|y-z|^{n-\alpha}} d \mu(z)\right)^{\frac{1}{s-1}} dy \\
	& \leq \int_{B_\rho(x)} \left( \int_{\rho}^{12} \frac{\mu(B_t(y))}{t^{n-\alpha}} \frac{dt}{t} \right)^{\frac{1}{s-1}} dy\nonumber\\
	& \leq c\, \rho^n  \left( \int_{\rho}^{12} \frac{\mu(B_{2t}(x))}{t^{n-\alpha}} \frac{dt}{t} \right)^{\frac{1}{s-1}}.\nonumber
	\end{align}
	
	We now claim that 
	\begin{equation}\label{I1}
	I_1\leq c\, \mu(B_{2\rho}(x))^\frac{1}{s-1} \rho^{n+\frac{\alpha-n}{s-1}}.
	\end{equation}
	
	Indeed, we have 
	$$I_1\leq \int_{|x-y|\leq \rho} \left(  \int_{|z-x|<2\rho} \frac{d\mu(z)}{|z-y|^{n-\alpha}} \right)^{\frac{1}{s-1}} dy, $$
	and thus when $s>2$ by H\"older's inequality with exponents $s-1$ and $\frac{s-1}{s-2}$ and Fubini's theorem we obtain 
	\begin{align*}
	I_1 &\leq \left(\int_{|z-x|<2\rho} d\mu(z) \int_{|x-y|\leq \rho} \frac{dy}{|z-y|^{n-\alpha}} \right)^{\frac{1}{s-1}} |B_\rho(x)|^{\frac{s-2}{s-1}}\\
	&\leq c\, \mu(B_{2\rho}(x))^\frac{1}{s-1}  \rho^{\frac{\alpha}{s-1}} \rho^{\frac{n(s-2)}{s-1}}=c\, \mu(B_{2\rho}(x))^\frac{1}{s-1} \rho^{n+\frac{\alpha-n}{s-1}}.
	\end{align*}
	
	On the other hand, when $2-\alpha/n<s\leq 2$ we use Minkowski's inequality to get 
	\begin{align*}
	I_1 &\leq \left[\int_{|z-x|<2\rho}  \left(\int_{|x-y|\leq \rho} \frac{dy}{|z-y|^{\frac{n-\alpha}{s-1}}}\right)^{s-1} d\mu(z) \right]^{\frac{1}{s-1}} \\
	&\leq c\, \mu(B_{2\rho}(x))^\frac{1}{s-1} \rho^{n+\frac{\alpha-n}{s-1}}.
	\end{align*} 
	
	Thus the claim \eqref{I1} follows.
	At this point combining estimates \eqref{V221}, \eqref{V222}-\eqref{I1} we obtain 
	\begin{equation}\label{VJ1}
	V_{22}(x)\leq C \int_{0}^{5} \left(\frac{\mu(B_\rho(x))}{\rho^{n-\alpha s}} \right)^{\frac{1}{s-1}} \frac{d\rho}{\rho} + C  J(x),
	\end{equation}
	where 
	$$J(x)=\int_{0}^{5}  \rho^\alpha  \left(\int_{\rho}^{12} \frac{\mu(B_{2t}(x))}{t^{n-\alpha}} \frac{dt}{t} \right)^{\frac{1}{s-1}}  \frac{d\rho}{\rho}.$$
	
	Using Hardy's inequality of the form 
	$$\left\{\int_{0}^\infty  \left(\int_{\rho}^\infty f(t) dt \right)^{q}  \rho^{\alpha} \frac{d\rho}{\rho} \right\}^{\frac{1}{q}} \leq \frac{q}{\alpha}
	\left\{\int_{0}^\infty (t f(t))^q t^\alpha\frac{dt}{t} \right\}^{\frac{1}{q}}, $$
	$1\leq q<\infty$, $\alpha>0$, $f\geq 0$, when $s\leq 2$ we find
	$$J(x)\leq C \int_{0}^{12} \left(\frac{\mu(B_{2t}(x))}{t^{n-\alpha s}} \right)^{\frac{1}{s-1}}\frac{dt}{t}\leq C \int_{0}^{24} \left(\frac{\mu(B_{\rho}(x))}{\rho^{n-\alpha s}} \right)^{\frac{1}{s-1}}\frac{d\rho}{\rho}.$$

	When $s>2$ we have
	\begin{align*}
	\left(\int_{\rho}^{12} \frac{\mu(B_{2t}(x))}{t^{n-\alpha}} \frac{dt}{t} \right)^{\frac{1}{s-1}} &\leq 
	\left(\sum_{ k=0}^{k_0} \int_{ 2^k\rho}^{2^{k+1}\rho} \frac{\mu(B_{2t}(x))}{t^{n-\alpha}} \frac{dt}{t} \right)^{\frac{1}{s-1}}\\
	& \leq C\, \left(\sum_{ k=0}^{k_0}  \frac{\mu(B_{2^{k+2}\rho}(x))}{(2^k\rho)^{n-\alpha}}  \right)^{\frac{1}{s-1}}\\
	& \leq C\, \sum_{ k=0}^{k_0}  \left(\frac{\mu(B_{2^{k+2}\rho}(x))}{(2^k\rho)^{n-\alpha}}\right)^{\frac{1}{s-1}},
	\end{align*}
	where  $k_0=k_0(\rho)$ is an integer such that $2^{k_0+1}\rho \geq 12$ and $2^{k_0+1}\rho <  24$. 
	This yields
	$$\left(\int_{\rho}^{12} \frac{\mu(B_{2t}(x))}{t^{n-\alpha}} \frac{dt}{t} \right)^{\frac{1}{s-1}} \leq  C\, \int_{2\rho}^{48} \left(\frac{\mu(B_{2t}(x))}{t^{n-\alpha}}\right)^{\frac{1}{s-1}} \frac{dt}{t} .$$

	Thus by Fubini's theorem we get 
	\begin{align*}
	J(x) &\leq  C\, \int_{0}^5 \rho^\alpha  \int_{2\rho}^{48} \left(\frac{\mu(B_{2t}(x))}{t^{n-\alpha}}\right)^{\frac{1}{s-1}} \frac{dt}{t}  \frac{d\rho}{\rho}\\
	& \leq  C\,\int_{0}^{48} \left(\frac{\mu(B_{2t}(x))}{t^{n-\alpha}}\right)^{\frac{1}{s-1}}  \int_{0}^{t/2}\rho^\alpha \frac{d\rho}{\rho}   \frac{dt}{t}\\
	&= C \int_{0}^{48}   \left(\frac{\mu(B_{2t}(x))}{t^{n-\alpha s}}\right)^{\frac{1}{s-1}} \frac{dt}{t}\\
	&\leq C \int_{0}^{96}   \left(\frac{\mu(B_{\rho}(x))}{\rho^{n-\alpha s}}\right)^{\frac{1}{s-1}} \frac{dt}{t}.
	\end{align*}
	
	Now combining \eqref{VJ1} with the above estimates for $J(x)$ we arrive at \eqref{VJ} as desired.
	
	The proof of the theorem is complete. 
\end{proof}

For any set $E\subset{\RR^n}$ with $0<\text{Cap}_{\alpha,s}(E)<\infty$, by \cite[Theorems 2.5.6 and 2.6.3 ]{AH}
one can find a nonnegative measure $\mu=\mu^{E}$ with ${\rm supp}(\mu) \subset \overline{E}$ (called capacitary measure for $E$) such that the function 
 $V^{E}=G_\alpha*((G_\alpha*\mu)^{\frac{1}{s-1}})$ satisfies the following properties:

\begin{equation}\label{muEcap}
\mu^E(\overline{E})={\rm Cap}_{\alpha,s}(E)=\int_{\RR^n} V^E d\mu^E= \int_{\RR^n} (G_\alpha *\mu^{E})^{\frac{s}{s-1}} dx,
\end{equation}
\begin{equation*}
V^E\geq 1 \quad \text{quasieverywhere on }E,
\end{equation*}
and
\begin{equation}\label{VElA}
V^E\leq A \quad \text{on } \RR^n.
\end{equation}

\begin{lemma}\label{L1CforVEt} Let $E$, $\mu=\mu^E$, and $V^E$ be as above and let $0<\alpha s\leq n$. If  $\delta\in (1, n/(n-\alpha))$ for $s< 2$ and $\delta\in (s-1, n(s-1)/(n-\alpha s))$ 
	for $s\geq 2$, then the function $(V^E)^\delta \in A_1^{\rm loc}$ with $[(V^E)^\delta]_{ A_1^{\rm loc}}\leq c(n, \alpha, s, \delta)$. Moreover, $(V^E)^\delta\in L^1(C)$ with 
	$\|(V^E)^\delta\|_{L^1(C)} \leq C\, {\rm Cap}_{\alpha, s}(E)$. 	
\end{lemma}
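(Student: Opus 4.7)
The lemma makes two assertions, which I would treat in order.

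The $A_1^{\rm loc}$ bound is immediate from Theorem~\ref{mainA1loc} applied to the measure $\mu=\mu^E$ (so that $V=V^E$). The only check is that the stated range of $\delta$ is contained in the range of $t$ allowed by Theorem~\ref{mainA1loc}. For $s<2$: if $s\le 2-\alpha/n$ the two ranges coincide; if $2-\alpha/n<s<2$, an elementary manipulation shows that $\tfrac{n}{n-\alpha}\le \tfrac{n(s-1)}{n-\alpha s}$ is equivalent to $s\ge 2-\alpha/n$, which holds by assumption. For $s\ge 2$, since $s-1\ge 1$, the range $(s-1, \tfrac{n(s-1)}{n-\alpha s})$ sits inside the theorem's range $(1, \tfrac{n(s-1)}{n-\alpha s})$. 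Invoking Theorem~\ref{mainA1loc} yields ${\bf M}^{\rm loc}((V^E)^\delta)\le A(n,\alpha,s,\delta)\,(V^E)^\delta$.

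For the $L^1(C)$ bound, I use layer-cake together with $V^E\le A$ to write
\[
\|(V^E)^\delta\|_{L^1(C)} \;=\; \delta\int_0^{A}\mathrm{Cap}_{\alpha,s}(\{V^E>\lambda\})\,\lambda^{\delta-1}\,d\lambda.
\]
Two standard ingredients are available: (a) the Chebyshev-type bound $\mathrm{Cap}_{\alpha,s}(\{V^E>\lambda\})\le \lambda^{-s}\,\mathrm{Cap}_{\alpha,s}(E)$, from the representation $V^E=G_\alpha*g$ with $\|g\|_{L^s}^s=\mathrm{Cap}_{\alpha,s}(E)$ (cf.~\eqref{muEcap}); and (b) the strong-type capacitary inequality $\int_0^\infty \mathrm{Cap}_{\alpha,s}(\{V^E>\lambda\})\,s\lambda^{s-1}\,d\lambda \le C\,\mathrm{Cap}_{\alpha,s}(E)$, a direct consequence of Theorem~\ref{MAD}. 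Splitting at $\lambda=1$, the contribution over $[1,A]$ is handled by (b) together with the pointwise comparison $\lambda^{\delta-1}\le \max(1,A^{\delta-s})\,\lambda^{s-1}$; and the contribution over $(0,1)$ is also handled by (b) when $\delta\ge s$, since then $\lambda^{\delta-1}\le \lambda^{s-1}$.

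The nontrivial case is $s-1<\delta<s$ on $(0,1)$, where Chebyshev alone produces a divergent integral. Here the plan is to invoke the refined estimate
\[
\mathrm{Cap}_{\alpha,s}(\{V^E>\lambda\})\le C\,\lambda^{-(s-1)}\,\mathrm{Cap}_{\alpha,s}(E),\qquad 0<\lambda\le 1,
\]
which is characteristic of \emph{capacitary} potentials and fails for arbitrary Bessel potentials. I would derive it from the Havin--Maz'ya--Wolff pointwise comparison $V^E\simeq W^\alpha_{s,\mu^E}$ with the truncated Wolff potential, combined with a weak-type capacity estimate $\mathrm{Cap}_{\alpha,s}(\{M^\alpha\mu>t\})\le C\,t^{-(s-1)}\mu(\mathbb{R}^n)$ for the truncated fractional maximal operator $M^\alpha\mu(x)=\sup_{0<r\le 1}\mu(B_r(x))/r^{n-\alpha s}$, so that $\{V^E>\lambda\}\subset \{M^\alpha\mu^E>c\lambda^{s-1}\}$ up to logarithmic corrections that are absorbed into the strict inequality $\delta>s-1$. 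Granted this bound, $\int_0^1 \lambda^{\delta-1-(s-1)}\,d\lambda=\int_0^1\lambda^{\delta-s}\,d\lambda$ converges precisely when $\delta>s-1$, producing a contribution bounded by $C\,\mathrm{Cap}_{\alpha,s}(E)/(\delta-s+1)$. The main obstacle is this refined level-set bound for small $\lambda$; it is where the hypothesis $\delta>s-1$ becomes sharp and where the specific nonlinear-potential-theoretic structure of the capacitary potential $V^E$ must be exploited, while everything else reduces to bookkeeping around Chebyshev and the strong-type capacitary inequality.
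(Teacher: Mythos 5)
Your treatment of the $A_1^{\rm loc}$ statement is correct and matches the paper: Theorem~\ref{mainA1loc} applies directly once one checks that the lemma's $\delta$-range sits inside the theorem's $t$-range, and your case-check of the ranges is right. Your layer-cake setup for the $L^1(C)$ part, the split at $\lambda=1$, and both ingredients (a) and (b) are also correct. The gap is in the small-$\lambda$ case. You propose to prove the pointwise level-set bound $\mathrm{Cap}_{\alpha,s}(\{V^E>\lambda\})\le C\lambda^{-(s-1)}\mathrm{Cap}_{\alpha,s}(E)$ for all $s>1$ by passing to the truncated fractional maximal operator $M^\alpha\mu$ via a Wolff-potential comparison and then using the weak-type estimate $\mathrm{Cap}_{\alpha,s}(\{M^\alpha\mu>t\})\le C\,t^{-(s-1)}\mu(\RR^n)$. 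That maximal-function weak-type estimate is \emph{false} for $1<s<2$. The Besicovitch covering argument only yields $\mathrm{Cap}_{\alpha,s}(\{M^\alpha\mu>t\})\le C\,t^{-1}\mu(\RR^n)$, and for $1<s<2$ one cannot upgrade to $t^{-(s-1)}$: take $\mu=c\,\chi_{B_R}$ with $R$ large and $c$ small, so $M^\alpha\mu\simeq c$ on the bulk of $B_R$ and $\mathrm{Cap}_{\alpha,s}(\{M^\alpha\mu>c/2\})\gtrsim R^n$, while $(c/2)^{-(s-1)}\mu(\RR^n)\simeq c^{2-s}R^n\to 0$ relative to $R^n$ as $c\to 0$ because $2-s>0$. (The inclusion $\{V^E>\lambda\}\subset\{M^\alpha\mu^E>c\lambda^{s-1}\}$ is also not rigorous as written; the Wolff sum has logarithmically many scales and the reduction does not go through ``up to logarithmic corrections'' that can simply be absorbed.)

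What actually works, and what the paper does, is to \emph{not} aim for the unified exponent $-(s-1)$. For $1<s<2$ the $\delta$-range in the lemma is $(1,n/(n-\alpha))$, so $\delta>1$; hence the weaker bound $\mathrm{Cap}_{\alpha,s}(\{V^E>\lambda\})\le C\lambda^{-1}\mathrm{Cap}_{\alpha,s}(E)$ already makes $\int_0^A\lambda^{-1}\lambda^{\delta-1}\,d\lambda$ converge. That weaker bound has a short self-contained proof: pair against the capacitary measure $\nu$ of $\{V^E>\lambda\}$, write $\lambda\,\mathrm{Cap}_{\alpha,s}(\{V^E>\lambda\})\le\int V^E\,d\nu=\int (G_\alpha*\mu^E)^{1/(s-1)}(G_\alpha*\nu)\,dx$, apply H\"older with exponents $1/(2-s)$ and $1/(s-1)$ to split off $\bigl(\int (G_\alpha*\mu^E)^{s/(s-1)}\bigr)^{2-s}=\mathrm{Cap}_{\alpha,s}(E)^{2-s}$ and $\bigl(\int (G_\alpha*\mu^E)(G_\alpha*\nu)^{1/(s-1)}\bigr)^{s-1}$, and then use Fubini together with the uniform bound on the nonlinear potential of $\{V^E>\lambda\}$ to close the second factor by $\mathrm{Cap}_{\alpha,s}(E)^{s-1}$. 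For $s\ge 2$ the required $\lambda^{-(s-1)}$ decay is exactly what $\delta>s-1$ needs, and the paper obtains it by citing Adams--Meyers (Proposition~4.4) rather than through the maximal function. So the correct organization is $\mathrm{Cap}_{\alpha,s}(\{V^E>\lambda\})\le C\lambda^{-\max\{s-1,1\}}\mathrm{Cap}_{\alpha,s}(E)$, proved by two different mechanisms in the two regimes; your single-exponent strategy both overreaches and, in the $1<s<2$ case, relies on a false estimate.
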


\begin{proof}
Thanks to Theorem 	\ref{mainA1loc}, we just meed to prove the last statement of the lemma. By \cite[Proposition 6.1.2]{AH} we see that $V^E$ and hence $(V^E)^\delta$ are quasicontinuous.
We have 
\begin{equation}\label{L1forVt}
\|(V^E)^\delta\|_{L^1(C)}= \delta \int_0^\infty {\rm Cap}_{\alpha, s}(\{V^E >\rho\}) \rho^{\delta-1}d\rho.
\end{equation}

For $s\geq2$, by \cite[Proposition 4.4]{AM} and \eqref{muEcap} it holds that 
$${\rm Cap}_{\alpha, s}(\{V^E >\rho\})\leq C\, \mu^{E}(\RR^n) \rho^{1-s} =C\, {\rm Cap}_{\alpha,s}(E) \rho^{1-s}. $$

For $1<s<2$, let $\nu$ be the capacitary measure for the set  $\{V^E >\rho\}$. By Fubini's theorem we have 
\begin{align*}
{\rm Cap}_{\alpha, s}(\{V^E >\rho\})&=\int_{\RR^n} d\nu \leq \rho^{-1} \int_{\RR^n} V^E d\nu\\
&=\rho^{-1} \int_{\RR^n} (G_\alpha *\mu^E(y))^{\frac{1}{s-1}} (G_\alpha*\nu(y)) dy.
\end{align*}

Thus by H\"older's inequality it follows that 
\begin{align*}
{\rm Cap}_{\alpha, s}(\{V^E >\rho\}) & \leq \rho^{-1}\left\{\int_{\RR^n} (G_\alpha *\mu^E(y))^{\frac{s}{s-1}} dy\right\}^{2-s} \times\\
&\quad \times  \left\{\int_{\RR^n} (G_\alpha *\mu^E(y)) (G_\alpha*\nu(y))^{\frac{1}{s-1}} dy\right\}^{s-1}\\
& = \rho^{-1} {\rm Cap}_{\alpha, s}(E)^{2-s} \int_{\RR^n} G_\alpha*((G_\alpha*\nu)^{\frac{1}{s-1}}) d\mu^{E}\\
&\leq C\, \rho^{-1} {\rm Cap}_{\alpha, s}(E)^{2-s} \mu^{E}(\RR^n)^{s-1}= C\, \rho^{-1} {\rm Cap}_{\alpha, s}(E).
\end{align*}

Using \eqref{VElA}-\eqref{L1forVt} and the above estimates for ${\rm Cap}_{\alpha,s}(\{V^E >\rho\})$,    we get 
\begin{align*}
\|(V^E)^\delta\|_{L^1(C)}\leq C\, {\rm Cap}_{\alpha, s}(E) \int_0^{A} \rho^{-\max\{s-1,1\}} \rho^{\delta-1} d\rho \leq C\, {\rm Cap}_{\alpha, s}(E),
\end{align*}
as desired.
\end{proof}	

We are now ready to prove Theorem \ref{firsttheorem}.

\begin{proof}[Proof of Theorem \ref{firsttheorem}]
	 By Theorem \ref{first dual}, given $w\in L^{1}(C)$ with $\|w\|_{L^{1}(C)}\leq 1$,  one has 
	\begin{align*}
	\int|f(x)|^{p} w(x)dx&\leq\| |f|^p\|_{M_{1}^{\alpha,s}}\int_{0}^{\infty}\text{Cap}_{\alpha,s}(\{w>t\})dt\\
	&=\|f\|^{p}_{M_{p}^{\alpha,s}}\| w\|_{L^{1}(C)},
	\end{align*}
	which yields
	\begin{align*}
	\sup\left\{\int|f(x)|^{p} w(x)dx: w\in L^{1}(C),\|w\|_{L^{1}(C)}\leq 1\right\}
	\leq\|f\|^{p}_{M_{p}^{\alpha,s}}.
	\end{align*}
	
	On the other hand, fix a constant $\delta$ such that $\delta\in (1, n/(n-\alpha))$ if $s< 2$ and $\delta\in (s-1, n(s-1)/(n-\alpha s))$ 
	if $s\geq 2$, and let $E$ be a compact subset of $\RR^n$ with ${\rm Cap}_{\alpha,s}(E)>0$. Then, with $V^E$ as in  Lemma \ref{L1CforVEt}, we can find a constant 
	${\bf \overline{c}}={\bf \overline{c}}(n,\alpha,s)\geq 1$ such that $[V^E/{\rm Cap}_{\alpha,s}(E)]_{A^{\rm loc}_{1}}\leq {\bf \overline{c}}$. Thus
	we have 
	\begin{align*}
	& \dfrac{\displaystyle\int_{E}|f(x)|^{p}dx}{\text{Cap}_{\alpha,s}(E)}\leq\dfrac{\displaystyle\int_{E}|f(x)|^{p}  (V^E)^{\delta}dx}{\text{Cap}_{\alpha,s}(E)}\leq \int_{\RR^n}|f(x)|^{p}\left(\dfrac{  (V^E)^{\delta}}{\text{Cap}_{\alpha,s}(E)}\right)dx\\
	& \leq C \, \sup\left\{\int|f(x)|^{p} w(x)dx: w\in L^{1}(C),\|w\|_{L^{1}(C)}\leq 1, [w]_{A^{\rm loc}_{1}}\leq {\bf \overline{c}}\right\}.
	\end{align*}
	
	This finishes the proof of the theorem.
\end{proof}

\begin{proof}[Proof of Theorem \ref{second}]
If $f\in M^{\alpha,s}_{p}$, then for any $g\in N^{\alpha,s}_{p'}$ and $w\in L^1(C)$, $\|w\|_{L^1(C)}\leq 1$, such that $\int |g|^{p'} w^{1-p'} dx<+\infty$,  one has
\begin{align*}
\left|\int f(x) g(x) dx\right| \leq \left(\int |f|^p w dx\right)^{\frac{1}{p}} \left(\int |g|^{p'} w^{1-p'} dx\right)^{\frac{1}{p'}}   
\end{align*} 
by H\"older's inequality. Thus  it follows from the proof of Theorem \ref{firsttheorem} that 
$$\left|\int f(x) g(x) dx\right| \leq  \| f\|_{M^{\alpha,s}_{p}} \| g\|_{N^{\alpha,s}_{p'}},$$
and so $L_{f}\in (N^{\alpha,s}_{p'})^*$.

Conversely, let $L \in (N^{\alpha,s}_{p'})^*$ be given. If $g\in L^{p'}$ with ${\rm supp}(g)\subset E$ for a bounded set $E$ with positive capacity, then with $V^E$ and $\delta$ as in Lemma 
\ref{L1CforVEt} we have  
$$\int_{\RR^n} |g|^{p'} \left[(V^E)^\delta/{\rm Cap}_{\alpha,s}(E)\right]^{1-p'} dx\leq {\rm Cap}_{\alpha,s}(E)^{p'-1} \|g\|_{L^{p'}}^{p'}.$$

Thus $g\in N^{\alpha,s}_{p'}$ with 
$$\|g\|_{N^{\alpha,s}_{p'}} \leq C {\rm Cap}_{\alpha,s}(E)^{\frac{1}{p}} \|g\|_{L^{p'}},$$
and so
$$|L(g)| \leq C \|L \|  {\rm Cap}_{\alpha,s}(E)^{\frac{1}{p}} \|g\|_{L^{p'}}.$$

By Riesz's representation theorem there is an $f\in L^p_{\rm loc}(\RR^n)$ such that 
\begin{equation}\label{Rpre}
L(g)=\int f(x) g(x) dx  
\end{equation} 
for all $g\in L^{p'}$ with compact support. In particular, if $g={\rm sgn}(f)|f|^{p-1} \chi_{K}$ for any compact set $K$, then we deduce
\begin{align*}
|L(g)|&= \int_{K} |f|^p \leq C \|L \|  {\rm Cap}_{\alpha,s}(K)^{\frac{1}{p}} \|g\|_{L^{p'}}\\
&=C \|L \|  {\rm Cap}_{\alpha,s}(K)^{\frac{1}{p}} \left(\int_{K} |f|^p\right)^{\frac{1}{p'}}.
\end{align*}

This implies $f\in M^{\alpha, s}_p$ and 
$$\|f\|_{M^{\alpha, s}_p} \leq C \|L \|.$$

Note that for any $g\in N^{\alpha, s}_{p'}$, the functions 
$$g_{k}:=\max\{\min\{g,k\}, -k\}\chi_{B_k(0)}, \quad k\geq 1,$$
converge to $g$ in $N^{\alpha, s}_{p'}$ as $k\rightarrow\infty$. Also, for any $g\in N^{\alpha, s}_{p'}$  and $k\geq 1$ we have 
$$\int |f| |g|_{k} dx \leq C \| f\|_{M^{\alpha,s }_p} \| g_k\|_{N^{\alpha,s }_{p'}} \leq C \| f\|_{M^{\alpha,s }_p} \| g\|_{N^{\alpha,s }_{p'}},$$
and thus by Fatou's lemma we get $f g\in L^{1}(\RR^n)$. Then by continuity, \eqref{Rpre}, and Lebesgue Dominated Convergence Theorem we arrive at
$$L(g)=\lim_{k\rightarrow\infty} L(g_k)=\lim_{k\rightarrow\infty} \int f(x) g_k(x) dx= \int f(x) g(x) dx$$
for all $g\in N^{\alpha, s}_{p'}$.

This completes the proof of the theorem.
\end{proof}

\begin{remark}
	The above proof shows that bounded functions with compact support $f$ are dense in $N^{\alpha, s}_{q}$. For such  $f$, we define $\rho_\epsilon * f= \epsilon^{-n}\rho(\epsilon^{-1} \cdot)*f$, where $\epsilon\in(0,1)$ and $\rho\in C_c^\infty(B_1(0))$. Let $B$ be a ball such that 
	${\rm supp}(f)\subset B$ and ${\rm supp}(\rho_\epsilon * f)\subset B$ for any $\epsilon\in(0,1)$. Then take a weight $w\in L^1(C)\cap A^{\rm loc}_1$ such that 
	$w\geq 1$ on $B$. We have
	$$\norm{\rho_\epsilon * f -f}_{N^{\alpha, s}_{q}}\leq C \left(\int_{\RR^n}|\rho_\epsilon * f -f|^q w^{1-q} dx\right)^{\frac{1}{q}}\leq C \norm{\rho_\epsilon * f -f}_{L^q}.$$

	Thus we see that  $C_c^\infty(\RR^n)$ is dense in 	$N^{\alpha, s}_{q}$. 
	Likewise, we also have  that $C_c^\infty(\RR^n)$ is dense in  the space	$\mathcal{N}^{\alpha, s}_{q}$.
\end{remark}

\begin{proof}[Proof of Theorem \ref{NoA1Th}] 
One just needs to follow the proofs of Theorems \ref{firsttheorem} and \ref{second} and replace the function $(V^E)^{\delta}$ with the characteristic function $\chi_E$.	
\end{proof}

\begin{remark} In general, functions in $N^{\alpha, s}_{p'}$ (hence $\widetilde{N}^{\alpha, s}_{p'}$) do not belong to $L^{p'}_{\rm loc}(\RR^n)$.
	To see this, consider the case $p=2, \alpha=1/4, s=2,$ and $n\geq 3$. Let $g(x)=|x|^{-n+1}$ for $|x|<1$ and  for $g(x)=|x|^{-n-1}$ for $|x|\geq 1$. 
	Also, let $w(x)=g(x)$ for any $x\in  \RR^n$.  Then using \eqref{rleq1} and \eqref{rgeq1}	it can be shown that $w\in A_1^{\rm loc}\cap L^1(C)$. Moreover, we have $g^{2} w^{-1} \in L^1(\RR^n)$. 
	Thus $g\in N^{\alpha, s}_{p'}$ (by enlarging ${\bf \overline{c}}(n,\alpha,s)$ if necessary) but $g\not\in L^{2}(B_1(0))$.
	
	On the other hand, if in the definition of  $N^{\alpha, s}_{p'}$ 
	we consider only weights $w$ such that $w \in L^\infty(\RR^n) \cap L^{1}(C)\cap A^{\rm loc}_1$ with $\|w\|_{L^{1}(C)}\leq 1$ and $[w]_{A^{\rm loc}_1}\leq {\bf \overline{c}}(n,\alpha,s)$
	(or only weights $w$ such that $w \in L^\infty(\RR^n)$ and $\int_{\RR^n} w dC \leq 1$ for $\widetilde{N}^{\alpha, s}_{p'}$), then Theorems \ref{second} and \ref{NoA1Th} still remain valid for those versions of $N^{\alpha, s}_{p'}$ and  $\widetilde{N}^{\alpha, s}_{p'}$. Moreover, functions in such spaces belong to $L^{p'}(\RR^n)$.
\end{remark}

\section{Proof of Theorem \ref{BspaceTh}}

\begin{proof}[Proof of Theorem \ref{BspaceTh}] By Proposition \ref{M'*}, we just need to show $(B^{\alpha,s}_{p'})^*=M^{\alpha,s}_{p}$.
Let $f\in M^{\alpha,s}_{p}$ and $g\in B^{\alpha,s}_{p'}$. Suppose that 
$g=\sum_{j} c_j a_j$ where $a_j=0$ in $\RR^n\setminus A_j$ and $\|a_j\|_{L^{p'}}\leq {\rm Cap}_{\alpha,s}(A_j)^{-1/p}$. We have 
\begin{align*}
 \left|\int f(x) g(x) dx\right| &\leq \sum_{j} |c_j| \int_{A_j} |f a_j| dx \\
&\leq \sum_{j} |c_j| \|f\|_{L^p(A_j)} \|a_j\|_{L^{p'}}\\
& \leq \sum_{j} |c_j| \|f\|_{L^p(A_j)} {\rm Cap}_{\alpha,s}(A_j)^{-1/p}\\
& \leq \sum_{j} |c_j|  \|f\|_{M^{\alpha,s}_p}.
\end{align*}

Thus, 
\begin{equation}\label{Fpart}
\left|\int f(x) g(x) dx\right| \leq  \|f\|_{M^{\alpha,s}_p}  \|g\|_{B^{\alpha,s}_{p'}}. 
\end{equation}

Conversely, let $L\in B^{\alpha, s}_{p'}$ be given. If $0\not=g\in L^{p'}$ with ${\rm supp} (g) \subset E$ for a bounded set $E$ then  $g\in B^{\alpha,s}_{p'}$ as 
we can write $g={\rm Cap}_{\alpha,s}(E)^{1/p} \|g\|_{L^{p'}} \tilde{g},$ where $\tilde{g}=g/({\rm Cap}_{\alpha,s}(E)^{1/p} \|g\|_{L^{p'}})$, and so 
$$\|g\|_{B^{\alpha,s}_{p'}} \leq  {\rm Cap}_{\alpha,s}(E)^{\frac{1}{p}} \|g\|_{L^{p'}}.$$

This gives
$$|L(g)| \leq  \|L \|  {\rm Cap}_{\alpha,s}(E)^{\frac{1}{p}} \|g\|_{L^{p'}}.$$

Then as in the proof of Theorem \ref{second} we can find an $f\in M^{\alpha, s}_p$ with $\|f\|_{M^{\alpha, s}_p} \leq  \|L \|,$
and 
\begin{equation}\label{BprimeRpre}
L(g)=\int f(x) g(x) dx  
\end{equation} 
for all $g\in L^{p'}$ with compact support.

We will now show that \eqref{BprimeRpre} holds for all $g\in B^{\alpha, s}_{p'}$. Note that for any $g\in B^{\alpha, s}_{p'}$, 
we have a representation 
$g=\sum_{j} c_j a_j$ where $a_j=0$ in $\RR^n\setminus A_j$, $A_j$'s are bounded sets, $\|a_j\|_{L^{p'}}\leq {\rm Cap}_{\alpha,s}(A_j)^{-1/p}$, and $\sum_{j} |c_j| <+\infty.$
Thus the functions 
$$g_{k}:=\sum_{|j|\leq k} c_j a_j, \qquad k\geq 1,$$
have compact support and converge to $g$ in $B^{\alpha, s}_{p'}$ as $k\rightarrow\infty$.
Also, if $h_k=\sum_{|j|\leq k} |c_j| |a_j|$, $k\geq 1$, then $h_k\in B^{\alpha, s}_{p'}$ and $\|h_k\|_{B^{\alpha,s}_{p'}} \leq \sum_{j} |c_j|$. 
Thus using \eqref{Fpart} we have 
\begin{equation*}
\int |f| h_{k} dx \leq  \| f\|_{M^{\alpha,s }_p} \| h_k\|_{B^{\alpha,s }_{p'}} \leq  \| f\|_{M^{\alpha,s }_p} \sum_{j} |c_j|,
\end{equation*}
and  by Fatou's lemma we get $f\sum_{j} |c_j| |a_j|$ and  $f g\in L^{1}(\RR^n)$.

Now by continuity, \eqref{BprimeRpre}, and Lebesgue Dominated Convergence Theorem we arrive at
$$L(g)=\lim_{k\rightarrow\infty} L(g_k)=\lim_{k\rightarrow\infty} \int f(x) g_k(x) dx= \int f(x) g(x) dx$$
for all $g\in B^{\alpha, s}_{p'}$.
\end{proof}	

\begin{remark} The proof above shows that if $g=\sum_{j} c_j a_j$, where $a_j=0$ in $\RR^n\setminus A_j$ for  a bounded set $A_j$, $\|a_j\|_{L^{p'}}\leq {\rm Cap}_{\alpha,s}(A_j)^{-1/p}$, and $\sum_{j} |c_j| <+\infty$, then the series 
$\sum_{j} c_j a_j$   converges absolutely a.e. in $\RR^n$. 
\end{remark}

\section{The space $\mathcal{N}_{q}^{\alpha,s}$}

Let $\alpha>0, s>1$, and $q>1$. Recall that   $\mathcal{L}^{1}(C)$ is defined as the K\"othe dual of $M^{\alpha,s}_{1}$.  Also, by Theorem \ref{first dual} we see that $L^1(C)$ is continuously embedded into $\mathcal{L}^1(C)$. Moreover, the norm of a function $g\in \mathcal{N}^{\alpha,s}_q$ is the defined as
\begin{align}\label{Nmathcalnorm}
\|g\|_{\mathcal{N}^{\alpha,s}_q}=\inf_{w}\left(\int_{\mathbb{R}^n}|g(x)|^{q}w(x)^{1-q}dx\right)^{1/q},
\end{align}
where the infimum is taken over all $w\in \mathcal{L}^{1}(C)\cap A^{\rm loc}_1$ with $\|w\|_{\mathcal{L}^{1}(C)}\leq 1$ and $[w]_{A^{\rm loc}_1}\leq {\bf \overline{c}}(n,\alpha,s)$. 

We remark that  if ${\rm Cap}_{\alpha,s}$ is strongly subadditive, then it can be shown from Hahn-Banach Theorem, Theorem  \ref{first dual}, and an approximation argument that  $\norm{f}_{L^1(C)} = \norm{f}_{\mathcal{L}^1(C)}$ for all $f\in L^1(C)$. 

Let  $\delta$ be a fixed constant such that $\delta\in (1, n/(n-\alpha))$ if $s< 2$ and $\delta\in (s-1, n(s-1)/(n-\alpha s))$ 
if $s\geq 2$.
We observe that if $E$ is subset of $\RR^n$  such that  $0<\text{Cap}_{\alpha,s}(E)<\infty$ and $V^E$  is as in Lemma \ref{L1CforVEt}, then
by Theorem \ref{first dual} and Lemma \ref{L1CforVEt} we have 
$$\int |g(x)| \left(\dfrac{  (V^E)^{\delta}}{\text{Cap}_{\alpha,s}(E)}\right)dx \leq C $$
for all $g\in L^1_{\rm loc} \cap M_1^{\alpha,s}$ such that $\|g\|_{M_1^{\alpha,s}}\leq 1$. That is, $$\left\|(V^E)^{\delta}/\text{Cap}_{\alpha,s}(E)\right\|_{\mathcal{L}^1(C)}\leq C.$$

 Moreover, $[V^E/{\rm Cap}_{\alpha,s}(E)]_{A^{\rm loc}_{1}}\leq {\bf \overline{c}}(n,\alpha,s)$ for some ${\bf \overline{c}}(n,\alpha,s)\geq 1$.
  Thus by a simple modification of the proofs of Theorems \ref{firsttheorem}
and \ref{second} we obtain the following duality result.

\begin{theorem}\label{NmathcaloA1Th} 
	For $p>1$ and $\alpha>0, s>1$, with $\alpha s\leq n$, we have 
	\begin{align*}
	\|f\|_{M^{\alpha,s}_p} \simeq \sup_{w}\left(\int_{{\RR}^{n}}|f(x)|^{p} w(x)dx\right)^{1/p},
	\end{align*}
	where the supremum is taken over all weights  $w\in \mathcal{L}^{1}(C)\cap A^{\rm loc}_1$ with $\|w\|_{\mathcal{L}^{1}(C)}\leq 1$ and $[w]_{A^{\rm loc}_1}\leq {\bf \overline{c}}(n,\alpha,s)$.  	Moreover, we have  $\left(\mathcal{N}^{\alpha,s}_{p'}\right)^{\ast} \approx M^{\alpha,s}_p$.
\end{theorem}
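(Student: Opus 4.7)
The plan is to follow the proofs of Theorems \ref{firsttheorem} and \ref{second} almost verbatim, substituting $\mathcal{L}^{1}(C)$ for $L^{1}(C)$ everywhere it appears, and using the definitional property of $\mathcal{L}^{1}(C)$ (as the K\"othe dual of $M^{\alpha,s}_{1}$) in place of the duality $(L^{1}(C))^{*} = \mathfrak{M}^{\alpha,s}$ from Theorem \ref{first dual}.

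For the upper bound in the norm equivalence, given a nonnegative $w\in\mathcal{L}^{1}(C)$ with $\|w\|_{\mathcal{L}^{1}(C)}\leq 1$, the very definition of $\mathcal{L}^{1}(C)$ yields
\begin{equation*}
\int_{\RR^n}|f|^{p}\,w\,dx \leq \| |f|^{p}\|_{M^{\alpha,s}_{1}}\,\|w\|_{\mathcal{L}^{1}(C)} = \|f\|_{M^{\alpha,s}_{p}}^{p},
\end{equation*}
and imposing the additional $A^{\rm loc}_{1}$ constraint only restricts the supremum further. For the reverse inequality, given a compact $E$ with $\text{Cap}_{\alpha,s}(E)>0$, set $w_{E}=(V^{E})^{\delta}/\text{Cap}_{\alpha,s}(E)$ with $\delta$ as in Lemma \ref{L1CforVEt}. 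That lemma already gives $[w_{E}]_{A^{\rm loc}_{1}}\leq{\bf\overline{c}}(n,\alpha,s)$ and $\|(V^{E})^{\delta}\|_{L^{1}(C)}\leq C\,\text{Cap}_{\alpha,s}(E)$, and because $L^{1}(C)\hookrightarrow\mathcal{L}^{1}(C)$ (noted just above the theorem) we conclude $\|w_{E}\|_{\mathcal{L}^{1}(C)}\leq C$. After normalizing by this universal constant, the pointwise bound $\chi_{E}\leq V^{E}\leq w_{E}\cdot\text{Cap}_{\alpha,s}(E)$ used in the proof of Theorem \ref{firsttheorem} gives
\begin{equation*}
\frac{\int_{E}|f|^{p}\,dx}{\text{Cap}_{\alpha,s}(E)}\leq \int_{\RR^n}|f|^{p}\,w_{E}\,dx,
\end{equation*}
which is dominated by the claimed supremum, whence $\|f\|_{M^{\alpha,s}_{p}}$ is controlled.

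For the duality part, I would follow the proof of Theorem \ref{second} line by line. The forward map $f\mapsto L_{f}$ is controlled by H\"older's inequality against a near-optimal weight $w$ realizing the infimum in \eqref{Nmathcalnorm}:
\begin{equation*}
\left|\int fg\,dx\right|\leq \Big(\int|f|^{p}w\,dx\Big)^{1/p}\Big(\int|g|^{p'}w^{1-p'}\,dx\Big)^{1/p'}\leq C\,\|f\|_{M^{\alpha,s}_{p}}\|g\|_{\mathcal{N}^{\alpha,s}_{p'}},
\end{equation*}
by the norm equivalence just proved. Conversely, for $L\in(\mathcal{N}^{\alpha,s}_{p'})^{*}$, the continuous embedding $L^{p'}(E)\hookrightarrow\mathcal{N}^{\alpha,s}_{p'}$ given by the weight $w_{E}$ yields $\|g\|_{\mathcal{N}^{\alpha,s}_{p'}}\leq C\,\text{Cap}_{\alpha,s}(E)^{1/p}\|g\|_{L^{p'}}$ for $g$ supported in $E$; Riesz representation then produces $f\in L^{p}_{\rm loc}$ with $L(g)=\int fg\,dx$ for all compactly supported $g\in L^{p'}$, and testing against $g=\operatorname{sgn}(f)|f|^{p-1}\chi_{K}$ bounds $\|f\|_{M^{\alpha,s}_{p}}\leq C\|L\|$.

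The only non--routine step is extending the identity $L(g)=\int fg\,dx$ to all $g\in\mathcal{N}^{\alpha,s}_{p'}$, and this is done as in the proof of Theorem \ref{second} by truncating $g_{k}=\max\{\min\{g,k\},-k\}\chi_{B_{k}(0)}$, observing $g_{k}\to g$ in $\mathcal{N}^{\alpha,s}_{p'}$, combining the forward bound with Fatou's lemma to conclude $fg\in L^{1}(\RR^n)$, and passing to the limit by dominated convergence. The main thing to verify is that the capacitary weight $w_{E}$ lies simultaneously in $\mathcal{L}^{1}(C)$ and $A^{\rm loc}_{1}$ with the correct quantitative bounds independent of $E$; this is exactly the content of Lemma \ref{L1CforVEt} together with $L^{1}(C)\hookrightarrow\mathcal{L}^{1}(C)$, so no new analysis is needed beyond what the authors have already established.
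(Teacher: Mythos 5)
Your proposal is correct and follows exactly the route the paper itself takes: the authors state Theorem \ref{NmathcaloA1Th} with only the remark that it follows "by a simple modification of the proofs of Theorems \ref{firsttheorem} and \ref{second}," with the only new ingredient being the verification (which they record just before the theorem) that the capacitary weight $(V^E)^\delta/\text{Cap}_{\alpha,s}(E)$ lies in $\mathcal{L}^1(C)\cap A^{\rm loc}_1$ with bounds independent of $E$, via Lemma \ref{L1CforVEt} and the embedding $L^1(C)\hookrightarrow\mathcal{L}^1(C)$. Your write-up fills in precisely these modifications and nothing else, so there is no gap and no deviation from the paper's argument.
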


The fact that $\|\cdot \|_{\mathcal{L}^1(C)}$ is a norm yields the following important result. A related result in the setting of  Morrey spaces can be found in
 \cite{MST}.

\begin{theorem}\label{NisBFS} The space $\mathcal{N}^{\alpha,s}_q$ with the norm given by \eqref{Nmathcalnorm} is a Banach function space.
\end{theorem}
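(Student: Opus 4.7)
The plan is to verify the four defining properties (P1)--(P4) of a Banach function space for the metric function $\rho(g) := \|g\|_{\mathcal{N}^{\alpha,s}_q}$ given by \eqref{Nmathcalnorm}, exploiting crucially (as highlighted right before the statement) that $\|\cdot\|_{\mathcal{L}^1(C)}$ is a genuine norm rather than only a quasi-norm. A first, simple observation is that the expression $\|w\|_{\mathcal{L}^1(C)}^{1/q'}(\int|g|^q w^{1-q}\,dx)^{1/q}$ is invariant under the rescaling $w\mapsto tw$, so \eqref{Nmathcalnorm} admits the equivalent unconstrained description
\begin{equation*}
\|g\|_{\mathcal{N}^{\alpha,s}_q}=\inf_w \|w\|_{\mathcal{L}^1(C)}^{1/q'}\Bigl(\int_{\RR^n}|g|^q w^{1-q}\,dx\Bigr)^{1/q},
\end{equation*}
the infimum running over all $w\in\mathcal{L}^1(C)\cap A^{\rm loc}_1$ with $[w]_{A^{\rm loc}_1}\le\bar{c}(n,\alpha,s)$ and no normalization on $\|w\|_{\mathcal{L}^1(C)}$. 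This unconstrained form is what will make the subsequent arguments---in particular the handling of sums of weights---go through.

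The main step is the triangle inequality in (P1). Given $g_1,g_2$ and $\epsilon>0$, I pick admissible $w_1,w_2$ near-optimal for $g_1,g_2$ respectively, and rescale them (preserving both the objective value and the $A^{\rm loc}_1$ constant) so as to force equality in H\"older's inequality on two-term sums, i.e.\ $\|w_1\|_{\mathcal{L}^1(C)}/I_1=\|w_2\|_{\mathcal{L}^1(C)}/I_2$, where $I_i:=\int|g_i|^q w_i^{1-q}\,dx$. The candidate test weight is $w:=w_1+w_2$, which is admissible: its $A^{\rm loc}_1$ constant is still at most $\bar c$ by sublinearity of ${\bf M}^{\rm loc}$, and crucially $\|w\|_{\mathcal{L}^1(C)}\le\|w_1\|_{\mathcal{L}^1(C)}+\|w_2\|_{\mathcal{L}^1(C)}$ precisely because $\|\cdot\|_{\mathcal{L}^1(C)}$ is a norm. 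The pointwise Minkowski-type inequality
\begin{equation*}
(a+b)^q(c+d)^{1-q}\le a^q c^{1-q}+b^q d^{1-q}\qquad (a,b,c,d\ge 0),
\end{equation*}
which follows from convexity of $t\mapsto t^q$ on $[0,\infty)$, applied with $a=|g_1|,b=|g_2|,c=w_1,d=w_2$ and integrated, gives $\int|g_1+g_2|^q w^{1-q}\,dx\le I_1+I_2$. Testing with this $w$ in the unconstrained form then yields
\begin{equation*}
\|g_1+g_2\|_{\mathcal{N}^{\alpha,s}_q}\le\bigl(\|w_1\|_{\mathcal{L}^1(C)}+\|w_2\|_{\mathcal{L}^1(C)}\bigr)^{1/q'}(I_1+I_2)^{1/q},
\end{equation*}
whose right-hand side collapses, at the H\"older equality case engineered above, to $\|w_1\|_{\mathcal{L}^1(C)}^{1/q'}I_1^{1/q}+\|w_2\|_{\mathcal{L}^1(C)}^{1/q'}I_2^{1/q}\le\|g_1\|_{\mathcal{N}^{\alpha,s}_q}+\|g_2\|_{\mathcal{N}^{\alpha,s}_q}+2\epsilon$. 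Letting $\epsilon\to 0$ finishes the triangle inequality.

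The remaining properties are then more routine. Homogeneity and positivity in (P1) are immediate. For (P4) and the definiteness clause of (P1), H\"older's inequality paired with the defining property of $\mathcal{L}^1(C)$ as the K\"othe dual of $M^{\alpha,s}_1$ gives $\int_E|g|\,dx\le\|\chi_E\|_{M^{\alpha,s}_1}^{1/q'}\|g\|_{\mathcal{N}^{\alpha,s}_q}$ for every bounded $E$ (noting $\|\chi_E\|_{M^{\alpha,s}_1}<\infty$), which simultaneously rules out nontrivial zero-norm functions and supplies (P4). Property (P3) is then verified by testing $\rho(\chi_E)$ with the capacitary-type weight $V^E/(C\,\text{Cap}_{\alpha,s}(E))$ from Lemma \ref{L1CforVEt}, using $V^E\ge 1$ quasi-everywhere, hence a.e., on $E$. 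I expect the main remaining obstacle to be (P2), the Fatou-type continuity $\rho(g_j)\uparrow\rho(g)$ for $0\le g_j\uparrow g$, where only $\rho(g)\le\sup_j\rho(g_j)$ is nontrivial. My plan there is to exploit the closure of the admissible class under monotone pointwise limits---Fatou for the K\"othe dual $\mathcal{L}^1(C)$ and stability of the $A^{\rm loc}_1$ bound under such limits (obtained by interchanging the defining supremum of ${\bf M}^{\rm loc}$ with a monotone limit)---then pick near-optimal normalized weights $w_j$ for each $g_j$ and extract a suitable combined or weak-$*$ limit weight coupled with the Monotone Convergence Theorem applied to $\int g_j^q w^{1-q}$. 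The delicacy here is that the $A^{\rm loc}_1$ class is not preserved under arbitrary weak limits, so the limit weight must be constructed with care.
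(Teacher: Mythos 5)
Your handling of the triangle inequality in (P1) is correct and in fact tidier than the paper's version: you pass to the scale-invariant unconstrained form, rescale $w_1,w_2$ so that the two-term H\"older inequality is tight, take $w=w_1+w_2$, and apply the pointwise inequality $(a+b)^q(c+d)^{1-q}\le a^qc^{1-q}+b^qd^{1-q}$. The crucial ingredient is the same one the paper leans on, namely that $\|w_1+w_2\|_{\mathcal{L}^1(C)}\le\|w_1\|_{\mathcal{L}^1(C)}+\|w_2\|_{\mathcal{L}^1(C)}$ holds because $\mathcal{L}^1(C)$, being a K\"othe dual, is a genuine norm. The paper achieves the same goal differently: it introduces a block-type norm $\|g\|_1=\inf\{\sum_j|c_j|: g=\sum_j c_j b_j\}$ with $\|b_j\|_{\mathcal{N}^{\alpha,s}_q}\le 1$, proves $\|g\|_{\mathcal{N}^{\alpha,s}_q}=\|g\|_1$ by forming $w=\sum_j|c_j|w_j$ (countable version of your test weight), and reads off normability and definiteness from that identity. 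Either route is fine; the paper's version also produces the countable decomposition which it reuses later (e.g.\ in the proofs of Lemma \ref{equivstrongsub} and Theorem \ref{isomorphism2}). Your checks of definiteness, (P3) and (P4) are also essentially those of the paper (modulo the small slip of writing $V^E$ where you mean $(V^E)^\delta$).

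The genuine gap is in (P2), and you have in fact correctly put your finger on where the delicacy lies, but you do not resolve it. A ``weak-$\ast$ limit'' of the near-optimal weights $w_j$ is not directly available (they are only bounded in $L^1$, which is not a dual space, and in any case weak limits would not pass nicely through the nonconvex quantity $\int f^q w^{1-q}dx$ nor preserve the $A_1^{\rm loc}$ bound). The tool the paper uses, and which you are missing, is Koml\'os' theorem: the $w_j$ are uniformly bounded in $L^1(\RR^n)$ (tested against the constant $1/C\in M_1^{\alpha,s}$ via \eqref{LinftyM}), so after passing to a subsequence the Ces\`aro means $\sigma_k=\frac1k\sum_{j=1}^k w_j$ converge a.e.\ to some $w$, and moreover every subsequence is Ces\`aro convergent to the same $w$. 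Then $\|w\|_{\mathcal{L}^1(C)}\le 1$ follows from Fatou's lemma against each $g\in M_1^{\alpha,s}$, the $A_1^{\rm loc}$ bound for $w$ is inherited from the $\sigma_k$'s, and the key estimate $\int f_j^q w^{1-q}dx\le(M+\epsilon)^q$ uses the convexity of $t\mapsto t^{1-q}$ to trade $(\text{Ces\`aro mean of }w_m)^{1-q}$ for the Ces\`aro mean of $w_m^{1-q}$, followed by the monotonicity $f_j\le f_m$ for $m\ge j$ (which is why the paper averages over indices $m$ starting from $j$). Without this Koml\'os-plus-convexity mechanism the Fatou property does not follow, so as written the proof is incomplete.
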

\begin{proof}
 For any $g\in \mathcal{N}^{\alpha,s}_q$ we set 
\begin{equation*}
\|g\|_1:= \inf\Big\{\sum_{j}|c_j|: g= \sum_{j} c_j b_j \text{ a.e.}\Big\},
\end{equation*}
where each $b_j$ is a block in  $\mathcal{N}^{\alpha,s}_q$, i.e., $b_j\in \mathcal{N}^{\alpha,s}_q$ and $\|b_j\|_{\mathcal{N}^{\alpha,s}_q}\leq 1$. It is easy to see that
$\|\cdot\|_1$ is actually a norm and $(\mathcal{N}^{\alpha,s}_q, \|\cdot\|_1)$ is a Banach space. That $\|g\|_1=0$ implies $g=0$ a.e. can be checked as follows. Since $\|g\|_1=0$, for any $\epsilon>0$, there exist $\{c_j\}\in \ell^1$ and blocks $b_j$'s such that 
$g= \sum_j c_j b_j$ and $\sum_j |c_j| <\epsilon$. Then for any $\varphi\in C_c(\RR^n)$ by Theorem \ref{NmathcaloA1Th} we have 
$$\int |g| |\varphi| dx \leq \sum_j |c_j| \int |b_j| |\varphi| dx\leq \sum_j |c_j| \| b_j\|_{\mathcal{N}^{\alpha,s}_q} \|\varphi\|_{M^{\alpha,s}_{q'}}\leq \epsilon\,  \|\varphi\|_{M^{\alpha,s}_{q'}},$$
which yields that $\int |g| |\varphi| dx=0$ for all $\varphi\in C_c(\RR^n)$ and hence $g=0$ a.e.

We next  show that 
\begin{equation}\label{norm1}
\|g\|_{\mathcal{N}^{\alpha,s}_q} =\|g\|_{1}
\end{equation} 
for all $g\in \mathcal{N}^{\alpha,s}_q$ and thus property (P1) in the definition of Banach function space is fulfilled (see Sub-section \ref{BFS}). 
 That $\|g\|_{1}\leq \|g\|_{\mathcal{N}^{\alpha,s}_q}$ is obvious. To show the converse, we will show that 
\begin{equation}\label{1plusep}
\|g\|_{\mathcal{N}^{\alpha,s}_q}\leq (1+\epsilon)^2 \|g\|_{1}, \qquad \forall \epsilon>0.
\end{equation} 

For any $g\in \mathcal{N}^{\alpha,s}_q$, $g\not=0$, and any $\epsilon>0$, there exist $\{c_j\}\in \ell^1$ and blocks $b_j$'s such that 
$g= \sum_j c_j b_j$ and $$\sum_j |c_j| \leq (1+\epsilon) \|g\|_{1}.$$

Since $\|b_j\|_{\mathcal{N}^{\alpha,s}_q}\leq 1$, we can find $w_j\in \mathcal{L}^1(C)\cap A_1^{\rm loc}$, with $\|w_j\|_{\mathcal{L}^1(C)}\leq 1$ and 
$[w_j]_{A_1^{\rm loc}}\leq {\bf \overline{c}}(n,\alpha,s)$, such that 
$$\left(\int |b_j|^{q} w_j^{1-q} dx\right)^{\frac{1}{q}} \leq 1+\epsilon.$$

By H\"older's inequality we have 
\begin{align*}
|g|^q \leq \Big(\sum_j |c_j||b_j|\Big)^{q} \leq \Big(\sum_j |c_j| w_j\Big)^{q-1} \Big(\sum_j c_j |b_j|^q w_j^{1-q} \Big).
\end{align*}

Let $w=\sum_{j}|c_j| w_j$. It is easy to see that $w\in \mathcal{L}^1(C)\cap A_1^{\rm loc}$, with $\|w\|_{\mathcal{L}^1(C)}\leq \sum_j |c_j|$ and 
$[w]_{A_1^{\rm loc}}\leq {\bf \overline{c}}(n,\alpha,s)$. We then have

$$\int |g|^q w^{1-q} dx \leq \sum_j |c_j|\int |b_j|^q |w_j|^{1-q}dx \leq \sum_j |c_j| (1+\epsilon)^q.$$

This gives
$$\int |g|^q \Big(\dfrac{w}{\sum_j |c_j|}\Big)^{1-q} dx \leq \Big(\sum_j |c_j|\Big)^q (1+\epsilon)^q,$$
and so 
$$\|g\|_{\mathcal{N}^{\alpha,s}_{q}} \leq \sum_j |c_j| \,  (1+\epsilon)\leq (1+\epsilon)^2 \|g\|_{1}.$$

Thus we obtain \eqref{1plusep} and so \eqref{norm1} follows.

As properties (P3) and (P4) are easy to check, what's left now is to verify the Fatou property (P2). To this end, let $\{f_j\}$, $j=1,2, \dots$, be a sequence of nonnegative measurable functions in $\mathcal{N}^{\alpha,s}_{q}$ and $f_j \uparrow f$ a.e. in $\RR^n$.  We just need to show that $\|f\|_{\mathcal{N}^{\alpha,s}_{q}} \leq \sup_{j\geq 1} \|f_{j}\|_{\mathcal{N}^{\alpha,s}_{q}}$. For this, we may assume that 
  $$\sup_{j\geq 1}\|f_{j}\|_{\mathcal{N}^{\alpha,s}_q}= M<+\infty.$$

Then for any $j\geq 1$ and $\epsilon>0$ we can find  $w_{j}\in \mathcal{L}^{1}(C)\cap A_1^{\rm loc}$, with $\|w_{j}\|_{\mathcal{L}^1(C)}\leq 1$ and 
$[w_{j}]_{A_1^{\rm loc}}\leq {\bf \overline{c}}(n,\alpha,s)$, 
such that  
\begin{align}\label{Mplus}
\left(\int|f_{j}(x)|^{q} w_{j}(x)^{1-q}dx\right)^{\frac{1}{q}} \leq M+\epsilon.
\end{align}

Note that if $g=1/C$, where $C$ is the constant in \eqref{LinftyM} then we have 
$g\in M_1^{\alpha,s}$ and $\|g\|_{M_1^{\alpha,s}}\leq 1$. This yields that 
$$\frac{1}{C}\int w_j(x) dx \leq \|w_j\|_{\mathcal{L}^1(C)}\leq 1, \qquad \forall j\geq 1. $$

Thus by Koml\'os Theorem (see \cite{Kom}), one can find some subsequence of $\{ w_{j}\}$,  still denoted by $\{w_{j}\}$, and a function $w$ such that  $$\sigma_{k}(x):=\dfrac{1}{k}\sum_{j=1}^{k}w_{j}(x)\rightarrow w(x)$$ for almost everywhere $x$. Moreover, any subsequence of $\{w_j\}$ is also Ces\`aro convergent to $w$ almost everywhere. 
Then for any function $g$ such that $\|g\|_{M_1^{\alpha,s}}\leq 1$, by Fatou's lemma  we have

\begin{align}
\int |g(x)| w(x) dx&\leq \liminf_{k\rightarrow\infty} \int |g(x)|  \sigma_k(x) dx\notag\\
&=\liminf_{k\rightarrow\infty}\dfrac{1}{k}\sum_{j=1}^{k} \int |g(x)|  w_j(x) dx \notag\\
&\leq \liminf_{k\rightarrow\infty}\dfrac{1}{k}\sum_{j=1}^{k}\| w_{j}\|_{\mathcal{L}^{1}(C)}\notag\\
&\leq 1.\notag
\end{align}

This shows that $w\in \mathcal{L}^1(C)$ and $\|w\|_{\mathcal{L}^1(C)}\leq 1$. Furthermore, for each $j\geq 1$, by the convexity of the function 
$t \mapsto t^{1-q}$ on $(0,\infty)$, we have

\begin{align*}
\int f_{j}(x)^{q} w(x)^{1-q}dx& =  \int f_{j}(x)^{q}  \lim_{k\rightarrow\infty} \Big[\dfrac{1}{k}\sum_{m=j}^{j+k-1} w_{m}(x)\Big]^{1-q}dx\\
&\leq \liminf_{k\rightarrow\infty}\int f_{j}(x)^{q} \Big[\dfrac{1}{k}\sum_{m=j}^{j+k-1} w_{m}(x)\Big]^{1-q}dx\\
&\leq\liminf_{k\rightarrow\infty}\int f_{j}(x)^{q}\,  \dfrac{1}{k}\sum_{m=j}^{j+k-1} w_{m}(x)^{1-q}dx\\
&\leq\liminf_{k\rightarrow\infty}\int   \dfrac{1}{k}\sum_{m=j}^{j+k-1} f_{m}(x)^{q} \, w_{m}(x)^{1-q}dx,
\end{align*}
where we used $0\leq f_j\leq f_m$ for $m\geq j$ in the last bound. By \eqref{Mplus}, this gives
\begin{align*}
\int f_{j}(x)^{q} w(x)^{1-q}dx \leq (M+\epsilon)^q,
\end{align*}
and letting $j\rightarrow\infty$ we get
\begin{align*}
\int f(x)^{q} w(x)^{1-q}dx \leq (M+\epsilon)^q.
\end{align*}

As this holds for all $\epsilon>0$ we arrive at 
$$\|f\|_{\mathcal{N}^{\alpha,s}_q}\leq M=\sup_{j\geq 1}\|f_{j}\|_{\mathcal{N}^{\alpha,s}_q},$$
which completes the proof of the theorem.
\end{proof}

We now obtain the main result of this section.

\begin{theorem}  \label{NMprime} For $\alpha>0, s>1$ with $\alpha s\leq n$ and $p>1$ we have 
\begin{equation}\label{Nprime}
 \left(\mathcal{N}^{\alpha,s}_{p'}\right)' \approx M_{p}^{\alpha,s},
\end{equation}
and 
\begin{equation}\label{Mprime}
 \mathcal{N}^{\alpha,s}_{p'} \approx \left(M_{p}^{\alpha,s}\right)'.
\end{equation}
 \end{theorem}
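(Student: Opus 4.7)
The plan is to deduce both isomorphisms directly from Theorem \ref{NmathcaloA1Th} combined with the Banach function space structure of $\mathcal{N}^{\alpha,s}_{p'}$ established in Theorem \ref{NisBFS}. The key observation is a standard fact about Banach function spaces: for any measurable $f$,
\begin{equation*}
\|f\|_{X'} = \|L_f\|_{X^*}, \qquad L_f(g) := \int f g\, dx.
\end{equation*}
The inequality $\|L_f\|_{X^*}\leq \|f\|_{X'}$ is immediate from $|\int fg\,dx| \leq \int|fg|\,dx$. The reverse is the standard ``sign trick'': given $g\in X$ with $\|g\|_X\leq 1$, set $\tilde g = {\rm sgn}(f)|g|$, so $|\tilde g|=|g|$ and hence $\|\tilde g\|_X = \|g\|_X\leq 1$ (using that the norm of a Banach function space depends only on $|g|$); then $\int|fg|\,dx = L_f(\tilde g)\leq \|L_f\|_{X^*}$, and supremum over $g$ gives the reverse bound.

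For \eqref{Nprime}, I apply this identity with $X=\mathcal{N}^{\alpha,s}_{p'}$. By Theorem \ref{NmathcaloA1Th}, every $L\in(\mathcal{N}^{\alpha,s}_{p'})^*$ is uniquely represented as $L=L_f$ for some $f\in M^{\alpha,s}_p$ with $\|L_f\|_{(\mathcal{N}^{\alpha,s}_{p'})^*}\simeq \|f\|_{M^{\alpha,s}_p}$. Combined with the identity above this yields $\|f\|_{(\mathcal{N}^{\alpha,s}_{p'})'}\simeq \|f\|_{M^{\alpha,s}_p}$, and identifies $(\mathcal{N}^{\alpha,s}_{p'})'$ and $M^{\alpha,s}_p$ as the same set of measurable functions (up to a.e.\ equivalence): any $f\in(\mathcal{N}^{\alpha,s}_{p'})'$ induces a bounded functional $L_f$, which by Theorem \ref{NmathcaloA1Th} is represented by some $\tilde f\in M^{\alpha,s}_p$, and testing against compactly supported bounded functions, which belong to $\mathcal{N}^{\alpha,s}_{p'}$, forces $f=\tilde f$ a.e. This also confirms the inclusion $M^{\alpha,s}_p\subset (\mathcal{N}^{\alpha,s}_{p'})'$ (the integral $\int|fg|\,dx$ is finite by H\"older with the weight $w$ from the definition of $\mathcal{N}^{\alpha,s}_{p'}$).

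For \eqref{Mprime}, the plan is to take the K\"othe dual of both sides of \eqref{Nprime}. Note that if two Banach function spaces coincide as sets of measurable functions with $c_1\|\cdot\|_X \leq \|\cdot\|_Y \leq c_2\|\cdot\|_X$, then their K\"othe duals coincide as sets with constants $c_2^{-1}, c_1^{-1}$. Applying this to \eqref{Nprime} yields $((\mathcal{N}^{\alpha,s}_{p'})')' \approx (M^{\alpha,s}_p)'$. Since $\mathcal{N}^{\alpha,s}_{p'}$ is a Banach function space by Theorem \ref{NisBFS}, Theorem \ref{XX''} gives $((\mathcal{N}^{\alpha,s}_{p'})')' = (\mathcal{N}^{\alpha,s}_{p'})'' = \mathcal{N}^{\alpha,s}_{p'}$ isometrically, establishing \eqref{Mprime}. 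The only delicate point in the argument, and probably the step deserving the most care, is the set-theoretic identification in step one showing that $(\mathcal{N}^{\alpha,s}_{p'})'$ and $M^{\alpha,s}_p$ contain exactly the same measurable functions, since without this the second dualization in step two cannot be performed against the right ambient space.
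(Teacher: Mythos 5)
Your proof is correct and follows the same route as the paper: deduce \eqref{Nprime} from Theorem \ref{NmathcaloA1Th} by relating $\|f\|_{(\mathcal{N}^{\alpha,s}_{p'})'}$ to $\|L_f\|_{(\mathcal{N}^{\alpha,s}_{p'})^*}$, and then obtain \eqref{Mprime} by K\"othe-dualizing both sides of \eqref{Nprime} and invoking $\mathcal{N}^{\alpha,s}_{p'}=(\mathcal{N}^{\alpha,s}_{p'})''$ via Theorems \ref{XX''} and \ref{NisBFS}. The paper simply says \eqref{Nprime} ``is just a consequence of Theorem \ref{NmathcaloA1Th}''; your write-up fills in that step with the standard Banach-function-space fact $\|f\|_{X'}=\|L_f\|_{X^*}$ and the set-theoretic identification, which is exactly the detail being elided.
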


\begin{proof} The relation \eqref{Nprime} is just a consequence of Theorem \ref{NmathcaloA1Th} and in fact more precisely we have 
$$\|f\|_{\left(\mathcal{N}^{\alpha,s}_{p'}\right)'}\leq \|f\|_{M_p^{\alpha,s}} \leq C  \|f\|_{\left(\mathcal{N}^{\alpha,s}_{p'}\right)'}.$$ 
 
To prove \eqref{Mprime} we note  from  \eqref{Nprime} that $\left(\mathcal{N}^{\alpha,s}_{p'}\right)'' \approx \left(M_{p}^{\alpha,s}\right)'$. On the other hand,  by Theorems \ref{XX''} and \ref{NisBFS} we find 
$\left(\mathcal{N}^{\alpha,s}_{p'}\right)''=\mathcal{N}^{\alpha,s}_{p'}$. Thus we obtain \eqref{Mprime} as claimed.
\end{proof}

\begin{remark}
If we drop the $A_1^{\rm loc}$ condition in the definition of 	$\mathcal{N}^{\alpha,s}_{q}$, then we get another space which we call $\widetilde{\mathcal{N}}^{\alpha,s}_{q}$. For this space we have $\left(\widetilde{\mathcal{N}}^{\alpha,s}_{p'}\right)' = M_{p}^{\alpha,s}$ and 
$\widetilde{\mathcal{N}}^{\alpha,s}_{p'} = \left(M_{p}^{\alpha,s}\right)'$, $p>1$. In particular, we have $\mathcal{N}^{\alpha,s}_{q} \approx \widetilde{\mathcal{N}}^{\alpha,s}_{q}$, $q>1$.
\end{remark}

\section{Proof of Theorems \ref{isomorphism} and \ref{isomorphism2}}

In order to prove Theorem \ref{isomorphism} we need the following lemmas.

\begin{lemma}\label{equivstrongsub} Suppose that $q>1$ and ${\rm Cap}_{\alpha,s}$ is strongly subadditive. Then $N^{\alpha,s}_q$ and $\widetilde{N}^{\alpha,s}_q$ are Banach spaces and we have 
$N^{\alpha,s}_q \approx \widetilde{N}^{\alpha,s}_q \approx B^{\alpha,s}_q$ with
\begin{equation*}
 \|f\|_{\widetilde{N}^{\alpha,s}_q} \leq \|f\|_{B^{\alpha,s}_q} \leq c_1 \,\|f\|_{\widetilde{N}^{\alpha,s}_q} \leq c_1 \,\|f\|_{N^{\alpha,s}_q} \leq c_2 \,\|f\|_{B^{\alpha,s}_q}.
\end{equation*}
\end{lemma}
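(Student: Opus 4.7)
The plan is to establish the four inequalities in the stated chain; once they are in hand the three spaces coincide as sets with equivalent (and in two cases, equal) norms, and the completeness of $N^{\alpha,s}_q$ and $\widetilde{N}^{\alpha,s}_q$ will follow from that of $B^{\alpha,s}_q$. The two analytic tools I plan to use throughout are: (i) the $1$-homogeneous convex function $F(x,y)=x^q y^{1-q}$ on $(0,\infty)^2$ is subadditive in $(x,y)$, so that $\bigl(\sum_j x_j\bigr)^q \bigl(\sum_j y_j\bigr)^{1-q} \leq \sum_j x_j^q y_j^{1-q}$; and (ii) strong subadditivity of ${\rm Cap}_{\alpha,s}$, which by Choquet's theorem makes the Choquet integral subadditive for arbitrary nonnegative functions.

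For $\|f\|_{\widetilde{N}^{\alpha,s}_q}\leq\|f\|_{B^{\alpha,s}_q}$, I would take an atomic representation $f=\sum_j c_j a_j$ with $a_j$ supported on a bounded set $A_j$ and $\|a_j\|_{L^q}\leq{\rm Cap}_{\alpha,s}(A_j)^{(1-q)/q}$ (discarding atoms with trivial support), then set $w_j=\chi_{A_j}/{\rm Cap}_{\alpha,s}(A_j)$ (so $\int w_j\,dC=1$) and $w=\sum_j|c_j|w_j$. Subadditivity of the Choquet integral gives $\int w\,dC\leq\sum_j|c_j|$, and applying the subadditivity of $F$ with $x_j=|c_j a_j|$ and $y_j=|c_j|w_j$ yields $\int|f|^q w^{1-q}\,dx\leq\sum_j|c_j|\int|a_j|^q w_j^{1-q}\,dx\leq\sum_j|c_j|$; rescaling $w$ by $1/\sum|c_j|$ and taking the infimum over representations completes this step. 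The inequality $\|f\|_{N^{\alpha,s}_q}\leq c_2\|f\|_{B^{\alpha,s}_q}$ is proved by exactly the same scheme, but with $w_j=(V^{A_j})^\delta/(C_0\,{\rm Cap}_{\alpha,s}(A_j))$, where $\delta$ and $C_0$ are furnished by Lemma \ref{L1CforVEt}; the $A^{\rm loc}_1$ class is stable under positive combinations with uniform constant, so $[w]_{A^{\rm loc}_1}\leq\bar c(n,\alpha,s)$, and if needed one first enlarges each $A_j$ to a compact set of comparable capacity via outer regularity so that $V^{A_j}$ is available. The inequality $\|f\|_{\widetilde{N}^{\alpha,s}_q}\leq\|f\|_{N^{\alpha,s}_q}$ is immediate, since every admissible weight for $N^{\alpha,s}_q$ is in particular a nonnegative measurable function with $\int w\,dC\leq 1$.

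The remaining inequality $\|f\|_{B^{\alpha,s}_q}\leq c_1\|f\|_{\widetilde{N}^{\alpha,s}_q}$ I would obtain by a duality-and-density argument rather than a direct atomic decomposition. Every $\varphi\in C_c^\infty$ is a single block, so $C_c^\infty\subset B^{\alpha,s}_q$; and $C_c^\infty$ is dense in $\widetilde{N}^{\alpha,s}_q$ by a standard truncate-and-mollify argument, using that $\|\psi\|_{\widetilde{N}^{\alpha,s}_q}\leq{\rm Cap}_{\alpha,s}(B)^{(q-1)/q}\|\psi\|_{L^q}$ whenever ${\rm supp}\,\psi\subset B$ (take $w=\chi_B/{\rm Cap}_{\alpha,s}(B)$). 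Combining the isometric dualities $(B^{\alpha,s}_q)^\ast = M^{\alpha,s}_{q'}=(\widetilde{N}^{\alpha,s}_q)^\ast$ from Theorems \ref{BspaceTh} and \ref{NoA1Th}, both realised through the pairing $(f,g)\mapsto\int fg\,dx$, the canonical embedding of $B^{\alpha,s}_q$ into its double dual yields
\[
\|g\|_{B^{\alpha,s}_q}=\sup_{\|f\|_{M^{\alpha,s}_{q'}}\leq 1}\Bigl|\int fg\,dx\Bigr|\leq\|g\|_{\widetilde{N}^{\alpha,s}_q}
\]
for every $g\in B^{\alpha,s}_q$, since the analogous canonical map from $\widetilde{N}^{\alpha,s}_q$ into $(M^{\alpha,s}_{q'})^\ast$ is a contraction. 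Combined with the first step, $B^{\alpha,s}_q\hookrightarrow\widetilde{N}^{\alpha,s}_q$ is therefore an isometric embedding, hence its image is closed; together with density of $C_c^\infty\subset B^{\alpha,s}_q$ this forces $B^{\alpha,s}_q=\widetilde{N}^{\alpha,s}_q$ with equality of norms, i.e.\ $c_1=1$. Banach-ness of $\widetilde{N}^{\alpha,s}_q$ is then inherited from $B^{\alpha,s}_q$, and that of $N^{\alpha,s}_q$ follows from its quasinorm being equivalent to the Banach norm of $B^{\alpha,s}_q$ via (c) and (d).

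The main obstacle I anticipate is the duality-and-density step above: verifying that the two predual identifications are genuinely given by the same integral pairing (so that the dual of the inclusion $B^{\alpha,s}_q\hookrightarrow\widetilde{N}^{\alpha,s}_q$ is the identity on $M^{\alpha,s}_{q'}$), and that the truncate-and-mollify density of $C_c^\infty$ in $\widetilde{N}^{\alpha,s}_q$ can be carried through using only the weighted $L^q$ estimate above — both are routine but somewhat delicate in view of the fact that $\widetilde{N}^{\alpha,s}_q$ is a priori only a quasinormed space.
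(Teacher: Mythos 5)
Your three estimates $\|f\|_{\widetilde{N}^{\alpha,s}_q}\leq\|f\|_{B^{\alpha,s}_q}$, $\|f\|_{N^{\alpha,s}_q}\leq c_2\|f\|_{B^{\alpha,s}_q}$, and $\|f\|_{\widetilde{N}^{\alpha,s}_q}\leq\|f\|_{N^{\alpha,s}_q}$ are obtained essentially as in the paper: extract an admissible weight from an atomic representation of $f$ via the $1$-homogeneous convexity of $(x,y)\mapsto x^q y^{1-q}$ and the subadditivity of the Choquet integral; the paper simply packages this through the block-norm identities \eqref{norm2}--\eqref{norm3} (whose proof is Theorem~\ref{NisBFS}) rather than arguing atom by atom. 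Where you genuinely diverge is the last direction $\|f\|_{B^{\alpha,s}_q}\leq c_1\|f\|_{\widetilde{N}^{\alpha,s}_q}$: the paper constructs an explicit atomic decomposition from a near-optimal weight $w$, using level sets $E_k=\{2^{k-1}<w\leq 2^k\}$, annuli $D_\ell$, H\"older, and the quasiadditivity \eqref{QAC} of ${\rm Cap}_{\alpha,s}$, while you invoke the canonical embedding $B^{\alpha,s}_q\hookrightarrow (B^{\alpha,s}_q)^{**}=(M^{\alpha,s}_{q'})^*$ against the contractive map $\widetilde{N}^{\alpha,s}_q\to(M^{\alpha,s}_{q'})^*$ and then close the gap with density of $C_c^\infty$. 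This duality--density route is legitimate and cleaner in one respect (it produces $c_1=1$ at this stage, whereas the paper only recovers equality of norms later in Theorem~\ref{isomorphism}), at the cost of relying on the unstated density of $C_c^\infty$ in $\widetilde{N}^{\alpha,s}_q$ (which you sketch; the paper proves the analogue only for $N^{\alpha,s}_q$ and $\mathcal{N}^{\alpha,s}_q$) and on the compatibility of the two isometric duality identifications through the same integral pairing, both of which you would have to spell out.

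There is, however, one real gap: your final sentence claims that Banach-ness of $N^{\alpha,s}_q$ ``follows from its quasinorm being equivalent to the Banach norm of $B^{\alpha,s}_q$.'' Equivalence of a quasinorm to a norm does \emph{not} make the quasinorm a norm, and what the lemma asserts is that $\|\cdot\|_{N^{\alpha,s}_q}$ itself is a complete norm. To establish the triangle inequality for $\|\cdot\|_{N^{\alpha,s}_q}$ you need to argue directly: given admissible weights $w_f, w_g$ and $a=\|f\|_{N^{\alpha,s}_q}+\epsilon$, $b=\|g\|_{N^{\alpha,s}_q}+\epsilon$, form $w=(a w_f+b w_g)/(a+b)$, check that $[w]_{A_1^{\rm loc}}\leq\overline{\bf c}(n,\alpha,s)$ (automatic) and that $\|w\|_{L^1(C)}\leq 1$ (this is exactly where strong subadditivity enters, via Choquet's theorem making $\|\cdot\|_{L^1(C)}$ a norm; and for countable combinations one also needs Proposition~\ref{completeness} to guarantee quasicontinuity of the sum), and then use the same $x^qy^{1-q}$-subadditivity to bound $\int|f+g|^q w^{1-q}\,dx$. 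This is precisely what the paper encodes by proving the block-norm identity \eqref{norm2}, which is manifestly a norm. Your proof as written leaves this out, so the assertion that $N^{\alpha,s}_q$ is a Banach space (as opposed to a quasi-Banach space isomorphic to one) is unjustified.
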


\begin{proof} By reasoning as in the proof of Theorem \ref{NisBFS} we find
\begin{equation}\label{norm2}
\|f\|_{N^{\alpha,s}_q}= \inf\Big\{\sum_{j}|c_j|: f= \sum_{j} c_j b_j \text{ a.e.} \Big\},
\end{equation}
where each $b_j\in N^{\alpha,s}_q$ and $\|b_j\|_{N^{\alpha,s}_q}\leq 1$. Likewise,
\begin{equation}\label{norm3}
\|f\|_{\widetilde{N}^{\alpha,s}_q}= \inf \Big\{\sum_{j}|c_j|: f= \sum_{j} c_j b_j \text{ a.e.}\Big\},
\end{equation}
where each $b_j\in \widetilde{N}^{\alpha,s}_q$ and $\|b_j\|_{\widetilde{N}^{\alpha,s}_q}\leq 1$. Note here that to verify \eqref{norm2} we use the completeness of $L^1(C)$ (Theorem \ref{completeness}) to obtain that 
if $w=\sum_j |c_j| w_j$, where $\{c_j\}\in \ell^1$ and $w_j\in L^1(C)$ with $\|w_j\|_{L^1(C)}\leq 1$, then  $w$ is quasicontinuous  and $\|w_j\|_{L^1(C)}\leq \sum_j |c_j|$. Now \eqref{norm2} and \eqref{norm3} yield that 
 $N^{\alpha,s}_q$ and $\widetilde{N}^{\alpha,s}_q$ are Banach spaces.

Note that if $a\in L^{q}(\RR^n)$ is such that there exists a bounded set $A\subset\RR^n$
	for which $a=0$ a.e. in $\RR^n\setminus A$ and   $\|a\|_{L^{q}}\leq {\rm Cap}_{\alpha,s}(A)^{\frac{1-q}{q}}$, then obviously
$$\|a\|_{\widetilde{N}^{\alpha,s}_q} \leq 1$$
and by Lemma \ref{L1CforVEt}
$$\|a\|_{N^{\alpha,s}_q} \leq C.$$

Thus it follows from \eqref{norm2} and \eqref{norm3} that 
$$\|f\|_{\widetilde{N}^{\alpha,s}_q} \leq \|f\|_{B^{\alpha,s}_q}, \qquad \text{and~}\|f\|_{N^{\alpha,s}_q} \leq C\, \|f\|_{B^{\alpha,s}_q}.$$

Also, it is obvious that $\|f\|_{\widetilde{N}^{\alpha,s}_q} \leq \|f\|_{N^{\alpha,s}_q}$ and so we just need to show
\begin{equation}\label{BNtil}
\|f\|_{B^{\alpha,s}_q} \leq C \,\|f\|_{\widetilde{N}^{\alpha,s}_q}
\end{equation}
for any $f\in \widetilde{N}^{\alpha,s}_q$. Now for  $f\in \widetilde{N}^{\alpha,s}_q$, we can find a nonnegative function  $w$ defined quasieverywhere such that 
\begin{align*}
\int w\, dC=\displaystyle\int_{0}^{\infty}\text{Cap}_{\alpha,s}(\{w>t\})dt\leq 1 
\end{align*}
and 
\begin{align*}
\left(\int |f(x)|^{q} w(x)^{1-q}dx\right)^\frac{1}{q}\leq 2 \|f\|_{\widetilde{N}^{\alpha,s}_q}.
\end{align*}

Note that 
\begin{align*}
\sum_{k\in{{\ZZ}}}2^{k}&\text{Cap}_{\alpha,s}(\{2^{k-1}< w\leq 2^{k}\})\\
&=\frac{1}{4}\sum_{k\in{{\ZZ}}}\int_{2^{k-2}}^{2^{k-1}}\text{Cap}_{\alpha,s}(\{2^{k-1}<w\leq 2^{k}\})dt\\
&\leq \frac{1}{4}\sum_{k\in{\bf{Z}}}\int_{2^{k-2}}^{2^{k-1}}\text{Cap}_{\alpha,s}(\{w>t\})dt\\
&= \frac{1}{4} \int w\, dC \leq \frac{1}{4}.
\end{align*}

Let $E_{k}=\{2^{k-1}< w\leq 2^{k}\}$ for $k\in\ZZ$ and $D_l= \{l-1\leq  |x|<l\}$ for $l=1,2, \dots$ Note that $w<+\infty$ quasieverywhere  and hence 
\begin{align*}
f=\sum_{k,l}f\chi_{E_{k}\cap D_{l}}=\sum_{k,l}c_{k,l}a_{k,l} \qquad \text{a.e.},
\end{align*}
where  $\sum_{k,l} = \sum_{k\in \ZZ}\sum_{l\geq 1}$ and 
\begin{align*}
c_{k,l}&=\|f\|_{L^{q}(E_{k}\cap D_{l})}\text{Cap}_{\alpha,s}(E_{k}\cap D_{l})^{(q-1)/q},\\
a_{k,l}&=\|f\|_{L^{q}(E_{k}\cap D_{l})}^{-1}\text{Cap}_{\alpha,s}(E_{k}\cap D_{l})^{(1-q)/q}f\chi_{E_{k}\cap D_{l}}.
\end{align*}
Here we understand that $a_{k,l}=0$ whenever $f=0$ a.e. in $E_{k}\cap D_{l}$. 
It is obvious that 
\begin{align*}
\|a_{k,l}\|_{L^{q}}=\text{Cap}_{\alpha,s}(E_{k}\cap D_{l})^{(1-q)/q},
\end{align*}
whenever $a_{k,l}\not=0$. Moreover, we have   that
\begin{align*}
\sum_{k,l}c_{k,l}&=\sum_{k,l}\left(\int_{E_{k}\cap D_{l}}|f(x)|^{q} w(x)^{1-q} w(x)^{q-1}dx\right)^{\frac{1}{q}}\text{Cap}_{\alpha,s}(E_{k}\cap D_{l})^{\frac{q-1}{q}}\\
&\leq\sum_{k,l}\left(\int_{E_{k}\cap D_{l}}|f(x)|^{q}   w(x)^{1-q}dx\right)^{\frac{1}{q}}2^{k(q-1)/q}\text{Cap}_{\alpha,s}(E_{k}\cap D_{l})^{\frac{q-1}{q}  }\\
&\leq\Big(\sum_{k,l}\int_{E_{k}\cap D_{l}}|f(x)|^{q} w(x)^{1-q}dx\Big)^{\frac{1}{q}}\Big(\sum_{k,l}2^{k}\text{Cap}_{\alpha,s}(E_{k}\cap D_{l})\Big)^{\frac{q-1}{q}},
\end{align*}
where we  used H\"older's inequality in the last line.

On the other hand, it follows from the quasiadditivity of  ${\rm Cap}_{\alpha,s}$ (see \eqref{QAC}) we have 
$$\sum_{l\geq 1} \text{Cap}_{\alpha,s}(E_{k}\cap D_{l})\leq C\, \text{Cap}_{\alpha,s}(E_{k}).$$

Thus,

\begin{align*}
\sum_{k,l}c_{k,l}& \leq C \left(\int |f(x)|^{q} w(x)^{1-q}dx\right)^{\frac{1}{q}}\left(\sum_{k}2^{k}\text{Cap}_{\alpha,s}(E_{k})\right)^{\frac{q-1}{q} }\\
&\leq C \|f\|_{\widetilde{N}^{p',\alpha,s}}.
\end{align*}

 We have succeeded to decompose $f$ as the sum $f=\sum_{j}c_{j}a_{j}$
such that $\|{c_{j}}\|_{l^{1}}\leq C\|f\|_{\widetilde{N}^{p',\alpha,s}}$ and 
$\|a_{j}\|_{L^{q}}\leq \text{Cap}_{\alpha,s}(A_{j})^{(1-q)/q}$
with $\{a_{j}\ne 0\}\subset A_{j}$ for a bounded set $A_j$. Thus by the definition of $B^{\alpha,s}_{q}$ we obtain  $f\in B^{\alpha,s}_{q}$ with the bound \eqref{BNtil}.
\end{proof}

\begin{lemma}  \label{NNtilB} Suppose that $q>1$ and ${\rm Cap}_{\alpha,s}$ is strongly subadditive. Then $N^{\alpha,s}_q$, $\widetilde{N}^{\alpha,s}_q$, and $B^{\alpha,s}_q$ are Banach function spaces.
\end{lemma}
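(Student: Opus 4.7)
The plan is to verify axioms (P1)--(P4) of a Banach function space (in the sense of Sub-section \ref{BFS}) for each of the three norms in turn.

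Axiom (P1) is provided by Lemma \ref{equivstrongsub}: under strong subadditivity the three spaces are genuine Banach spaces, and the triangle inequalities for $N^{\alpha,s}_q$ and $\widetilde{N}^{\alpha,s}_q$ can be read off from the block-type representations \eqref{norm2} and \eqref{norm3}. Axioms (P3) and (P4) are routine. For (P3), any bounded measurable $E\subset\RR^n$ with positive capacity makes $\chi_E$, after normalization, a single block of $B^{\alpha,s}_q$, giving $\|\chi_E\|_{B^{\alpha,s}_q}\leq |E|^{1/q}{\rm Cap}_{\alpha,s}(E)^{(q-1)/q}<\infty$; the analogous bound for $\widetilde{N}^{\alpha,s}_q$ follows from testing with the weight $w=\chi_E/{\rm Cap}_{\alpha,s}(E)$, and for $N^{\alpha,s}_q$ from testing with $w=(V^E)^{\delta}/{\rm Cap}_{\alpha,s}(E)$ furnished by Lemma \ref{L1CforVEt}. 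For (P4), $\chi_E\in L^\infty(\RR^n)\hookrightarrow M^{\alpha,s}_{q'}$ by \eqref{LinftyM}, and the duality inequalities contained in Theorems \ref{second}, \ref{NoA1Th} and \ref{BspaceTh} yield $\int_E f\,dx=\int f\chi_E\,dx\leq C\,\|f\|_X\,\|\chi_E\|_{M^{\alpha,s}_{q'}}$ for $X\in\{N^{\alpha,s}_q,\widetilde{N}^{\alpha,s}_q,B^{\alpha,s}_q\}$. The pointwise monotonicity $\|g\|_X\leq\|h\|_X$ whenever $0\leq g\leq h$ is immediate from each of the three definitions.

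The substantive axiom is the Fatou property (P2). For $B^{\alpha,s}_q$ and $\widetilde{N}^{\alpha,s}_q$ this is a one-line consequence of the isometric predualities $(B^{\alpha,s}_q)^*=M^{\alpha,s}_{q'}=(\widetilde{N}^{\alpha,s}_q)^*$ from Theorems \ref{BspaceTh} and \ref{NoA1Th}. Since the canonical embedding $X\hookrightarrow X^{**}$ is always isometric, we obtain the dual formula
\[
\|f\|_X=\sup\Bigl\{\Bigl|\int fg\,dx\Bigr|:\ g\in M^{\alpha,s}_{q'},\ \|g\|_{M^{\alpha,s}_{q'}}\leq 1\Bigr\},\qquad X=B^{\alpha,s}_q\text{ or }\widetilde{N}^{\alpha,s}_q.
\]
For $0\leq f_j\uparrow f$ and any admissible $g$, splitting $g=g_+-g_-$ and applying the monotone convergence theorem to each sign gives $\int fg\,dx=\lim_j\int f_jg\,dx$; hence $|\int fg\,dx|\leq(\sup_j\|f_j\|_X)\|g\|_{M^{\alpha,s}_{q'}}$, and taking the supremum in $g$ combined with the monotonicity noted above yields $\|f_j\|_X\uparrow\|f\|_X$.

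The remaining case $N^{\alpha,s}_q$ is delicate because Theorem \ref{second} only supplies an isomorphism (not an isometry) with $(M^{\alpha,s}_{q'})^*$. For this I would transplant the Komlós argument from the proof of Theorem \ref{NisBFS}, with $\mathcal{L}^1(C)$ replaced throughout by $L^1(C)$; under strong subadditivity the latter is a genuine Banach space by Choquet's theorem together with Proposition \ref{completeness}. Given $0\leq f_j\uparrow f$ with $M:=\sup_j\|f_j\|_N<\infty$, pick near-optimal weights $w_j\in L^1(C)\cap A_1^{\rm loc}$ with $\|w_j\|_{L^1(C)}\leq 1$ and $[w_j]_{A_1^{\rm loc}}\leq\overline{c}$. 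The universal bound $|E|\leq C\,{\rm Cap}_{\alpha,s}(E)$ forces $\{w_j\}$ to be bounded in $L^1(\RR^n)$, so Komlós furnishes a subsequence whose Ces\`aro averages $\sigma_k$ converge a.e. to some $w\geq 0$. The convexity of $t\mapsto t^{1-q}$ produces $\int f^q w^{1-q}\,dx\leq M^q$ exactly as in Theorem \ref{NisBFS}; Fatou's lemma applied inside ${\bf M}^{\rm loc}$ preserves $[w]_{A_1^{\rm loc}}\leq\overline{c}$; and Fatou for the Choquet integral (subadditive under strong subadditivity) yields $\int w\,dC\leq 1$. It remains to endow $w$ with a quasicontinuous representative, required for membership in $L^1(C)$; I would arrange this by extracting a sub-subsequence of $\{\sigma_k\}$ that is Cauchy in the $L^1(C)$-norm via a standard diagonal extraction, then invoking the completeness in Proposition \ref{completeness}.

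The hard part will be this final step, the quasicontinuity of the Komlós limit weight in the $N^{\alpha,s}_q$ argument. It is precisely here that the strong subadditivity hypothesis is essential, both to make $L^1(C)$ a Banach space so that Cauchy sequences possess quasicontinuous limits, and to provide the subadditive Fatou behavior of the Choquet integral used to control $\int w\,dC$.
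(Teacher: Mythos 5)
Your verification of (P1), (P3), (P4) is fine, but both of your arguments for the Fatou property (P2) have genuine gaps, and (P2) is the whole lemma.

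For $\widetilde{N}^{\alpha,s}_q$ and $B^{\alpha,s}_q$: the dual formula
$\|f\|_X=\sup\{\,|\int fg\,dx|:g\in M^{\alpha,s}_{q'},\,\|g\|\le 1\,\}$
is valid only for $f$ already known to lie in $X$, because it is derived from the isometry $X\hookrightarrow X^{**}$. Given $0\le f_j\uparrow f$ with $\sup_j\|f_j\|_X=M<\infty$, monotone convergence gives you $\sup_g|\int fg\,dx|\le M$, i.e.\ $L_f\in (M^{\alpha,s}_{q'})^{*}$ with $\|L_f\|\le M$. But this is a statement about the \emph{bidual} $X^{**}=(M^{\alpha,s}_{q'})^{*}$, and you have no reason to conclude that $f$ itself lies in $X$ (that $f$ admits a block decomposition with $\ell^1$ coefficients bounded by $M+\epsilon$, or a witnessing weight $w$ with $\int|f|^q w^{1-q}\,dx\le(M+\epsilon)^q$ and $\int w\,dC\le 1$). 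Indeed, $f\in X$ from a bound on $\sup_g|\int fg\,dx|$ is exactly the assertion $X=(M^{\alpha,s}_{q'})'=X''$, which for a Banach function space is \emph{equivalent} to the Fatou property you are trying to prove; the argument is circular. What is needed is a \emph{constructive} witness. The paper proves Fatou for $\widetilde{N}^{\alpha,s}_q$ directly via Koml\'os: the weights in $\widetilde{N}^{\alpha,s}_q$ need only be q.e.\ defined (no quasicontinuity required), so after Ces\`aro averaging one can redefine $w$ to be zero off the a.e.\ convergence set, obtain $w\le\liminf_k\sigma_k$ q.e., and apply Fatou for the Choquet integral (this is where strong subadditivity enters, giving $\int\sigma_k\,dC\le\tfrac1k\sum_j\int w_j\,dC$).

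For $N^{\alpha,s}_q$: your instinct that the difficulty is the quasicontinuity of the Koml\'os limit is exactly right, but your proposed repair does not work. Boundedness of $\{\sigma_k\}$ in $L^1(C)$ gives no Cauchy sub-subsequence; $L^1(C)$ is a non-reflexive Banach space (it behaves like $L^1$), so a bounded sequence has no reason to possess a norm-Cauchy subsequence, and a.e.\ Koml\'os convergence does not upgrade to $L^1(C)$-norm convergence. The paper sidesteps this entirely: once $\widetilde{N}^{\alpha,s}_q$ is a Banach function space, $(\widetilde{N}^{\alpha,s}_q)^{*}=M^{\alpha,s}_{q'}=(\widetilde{N}^{\alpha,s}_q)'$ together with Theorem \ref{X*X'} shows that $\widetilde{N}^{\alpha,s}_q$ has absolutely continuous norm. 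Then for $0\le f_j\uparrow f$ with $\sup_j\|f_j\|_{N^{\alpha,s}_q}=M$, Lemma \ref{equivstrongsub} and the Fatou property of $\widetilde{N}^{\alpha,s}_q$ give $f\in\widetilde{N}^{\alpha,s}_q$, hence $g_j:=f-f_j\downarrow 0$ with $g_j\in\widetilde{N}^{\alpha,s}_q$; absolute continuity gives $\|g_j\|_{\widetilde{N}^{\alpha,s}_q}\downarrow 0$, and the norm equivalence in Lemma \ref{equivstrongsub} transfers this to $\|g_j\|_{N^{\alpha,s}_q}\downarrow 0$, yielding $\|f_j\|_{N^{\alpha,s}_q}\uparrow\|f\|_{N^{\alpha,s}_q}$. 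The same reduction, after establishing the lattice properties $\|f\|_{B^{\alpha,s}_q}=\||f|\|_{B^{\alpha,s}_q}$ and $0\le f\le g\Rightarrow\|f\|_{B^{\alpha,s}_q}\le\|g\|_{B^{\alpha,s}_q}$, handles $B^{\alpha,s}_q$. I recommend restructuring your proof around this reduction to $\widetilde{N}^{\alpha,s}_q$.
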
	

\begin{proof} First we show that $\widetilde{N}^{\alpha,s}_q$ is a Banach function space and for that we just need to check the Fatou property (P2).
The proof is similar to that of Theorem \ref{NisBFS}. 	
Let $\{f_j\}$, $j=1,2, \dots$, be a sequence of nonnegative measurable functions in $\widetilde{N}^{\alpha,s}_{q}$ and $f_j \uparrow f$ a.e. in $\RR^n$. Suppose that 
$\sup_{j\geq 1} \|f_{j}\|_{\widetilde{N}^{\alpha,s}_{q}}=M<+\infty$. It is enough to show that $\|f\|_{\widetilde{N}^{\alpha,s}_{q}} \leq M.$

As in the proof of Theorem \ref{NisBFS},  for any $j\geq 1$ and $\epsilon>0$ we can find  a nonnegative and q.e. defined weight $w_{j}$ with  $\int w_j\, dC\leq 1$ 
such that  
\begin{align*}
\left(\int|f_{j}(x)|^{q} w_{j}(x)^{1-q}dx\right)^{\frac{1}{q}} \leq M+\epsilon
\end{align*}
and $\int w_j(x) dx \leq C.$ Then by Koml\'os Theorem, one can find a subsequence of $\{ w_{j}\}$,  still denoted by $\{w_{j}\}$, and a function $w$ such that  $\sigma_{k}(x):=\dfrac{1}{k}\sum_{j=1}^{k}w_{j}(x)\rightarrow w(x)$ for almost everywhere $x$. Moreover, any subsequence of $\{w_j\}$ is also Ces\`aro convergent to $w$ almost everywhere.
By redefining $w(x)$ to be zero for all the points $x$ such that  $\sigma_{k}(x)\not\rightarrow w(x)$, one has 
\begin{align*}
w(x)\leq\liminf_{k\rightarrow\infty}\sigma_{k}(x) \qquad \text{quasieverywhere.}
\end{align*} 

Hence,
\begin{align}
\int_{0}^{\infty}\text{Cap}_{\alpha,s}\left(\left\{ w>t\right\}\right)dt   &\leq\int_{0}^{\infty}\text{Cap}_{\alpha,s}\left(\left\{\liminf_{k\rightarrow\infty}\sigma_{k}>t\right\}\right)dt\notag\\
&\leq\int_{0}^{\infty}\liminf_{k\rightarrow\infty}\text{Cap}_{\alpha,s}(\{\sigma_{k}>t\})dt\notag\\
&\leq\liminf_{k\rightarrow\infty}\int_{0}^{\infty}\text{Cap}_{\alpha,s}(\{\sigma_{k}>t\})dt\notag\\
&\leq\liminf_{k\rightarrow\infty}\dfrac{1}{k}\sum_{j=1}^{k}\int_{0}^{\infty}\text{Cap}_{\alpha,s}(\{w_{j}>t\})dt,\notag
\end{align}
where we  used the strong subadditivity of ${\rm Cap}_{\alpha,s}$ in the last inequality. This gives
$$
\int w\, dC   \leq \liminf_{k\rightarrow\infty}\dfrac{1}{k}\sum_{j=1}^{k} \int w_{j} \, dC \leq 1.
$$

Moreover, as in the proof of Theorem \ref{NisBFS} we have 
$$
\int|f(x)|^{q} w(x)^{1-q}dx \leq (M+\epsilon)^q
$$
and so $\|f\|_{\widetilde{N}^{\alpha,s}_q}\leq M$ as desired.

Next we show that $N^{\alpha,s}_q$ is a Banach function space. Let $\{f_j\}$, $j=1,2, \dots$, be a sequence of nonnegative measurable functions in $N^{\alpha,s}_{q}$ and $f_j \uparrow f$ a.e. in $\RR^n$. Suppose that 
$\sup_{j\geq 1} \|f_{j}\|_{N^{\alpha,s}_{q}}=M<+\infty$. By Lemma \ref{equivstrongsub} and the Fatou property of $\widetilde{N}^{\alpha,s}_{q}$, we have 
$\|f\|_{\widetilde{N}^{\alpha,s}_{q}}\leq M$. In particular, $f\in \widetilde{N}^{\alpha,s}_{q}\cap N^{\alpha,s}_{q}$ and thus $g_j:=f-f_j \in \widetilde{N}^{\alpha,s}_{q}\cap N^{\alpha,s}_{q}$ for all $j\geq 1$ and 
$g_j\downarrow 0$ a.e.

On the other hand, Theorem \ref{second} implies that   $\left(\widetilde{N}^{\alpha,s}_{q}\right)^*=\left(\widetilde{N}^{\alpha,s}_{q}\right)'$ with equality of norms and hence 
it follows from Theorem \ref{X*X'} that $\widetilde{N}^{\alpha,s}_{q}$ has an absolutely continuous norm. This yields that $\|f-f_j\|_{\widetilde{N}^{\alpha,s}_{q}}=\|g_j\|_{\widetilde{N}^{\alpha,s}_{q}}\downarrow 0$.
Thus by Lemma \ref{equivstrongsub} we then obtain $\|g_j\|_{N^{\alpha,s}_{q}}\downarrow 0$.
This yields $\|f_j\|_{N^{\alpha,s}_{q}}\uparrow \|f\|_{N^{\alpha,s}_{q}}$ and the Fatou property (P2) follows for $N^{\alpha,s}_{q}$.
It is now easy to see that $N^{\alpha,s}_q$ is a Banach function space.

The proof that $B^{\alpha,s}_q$ is a Banach function space can be proceeded similarly, as long as we can verify the following  properties of $B^{\alpha,s}_q$:

\begin{equation}\label{absol}
 \|f\|_{B^{\alpha,s}_q} = \| |f|\|_{B^{\alpha,s}_q}, \qquad \forall f\in B^{\alpha,s}_q,
\end{equation}
and
\begin{equation}\label{lat}
0\leq f\leq g  \text{~a.e.~} \Rightarrow  \|f\|_{B^{\alpha,s}_q}\leq \|g\|_{B^{\alpha,s}_q}, \qquad \forall g\in B^{\alpha,s}_q.
\end{equation}

Equality  \eqref{absol} is easy to see from the identities $|f|=f{\rm sgn}(f)$ and $f=|f|{\rm sgn}(f)$.
To see \eqref{lat}, suppose that  $g\in B^{\alpha,s}_q$ and $g=\sum_j c_j a_j$, where  $\{c_{j}\}\in \ell^{1}$ and each $a_j\in L^{q}(\RR^n)$ is such that there exists a bounded set $A_j\subset\RR^n$
for which $a_{j}=0$ a.e. in $\RR^n\setminus A_j$ and  $\|a_{j}\|_{L^{q}}\leq {\rm Cap}_{\alpha,s}(A_{j})^{\frac{1-q}{q}}$. Then for $0\leq f\leq g$ we can write
$$f=\sum_j c_j  f g^{-1} a_j \chi_{\{g\not=0\}} \qquad \text{a.e.}$$
Note that $\|f g^{-1} a_j \chi_{\{g\not=0\}}\|_{L^q} \leq \| a_j\|_{L^q} \leq {\rm Cap}_{\alpha,s}(A_{j})^{\frac{1-q}{q}}$, and thus $f\in B^{\alpha,s}_q$ and 
$$\|f\|_{B^{\alpha,s}_q}\leq \|g\|_{B^{\alpha,s}_q}.$$

This completes the proof of the lemma.
\end{proof}	

We can now prove Theorem \ref{isomorphism}.

\begin{proof}[Proof of Theorem \ref{isomorphism}] 
By Theorems  \ref{NoA1Th} and \ref{BspaceTh}  we have 
	$$\left(M^{\alpha, s}_p\right)'= \left(\widetilde{N}^{\alpha, s}_{p'}\right)'' =  \left(B^{\alpha, s}_{p'}\right)''.$$	 
	
Thus if  ${\rm Cap}_{\alpha,s}$ is strongly subadditive then by Theorem \ref{XX''} and Lemma \ref{NNtilB}	we find
$$ (M^{\alpha, s}_p)' = \widetilde{N}^{\alpha, s}_{p'} =  B^{\alpha, s}_{p'}.$$	 

Likewise, by Theorem \ref{second} we have $\left(N^{\alpha, s}_{p'}\right)'' \approx (M^{\alpha, s}_p)'$ and so it follows from Theorem \ref{XX''} and Lemma \ref{NNtilB}
that $N^{\alpha, s}_{p'} \approx (M^{\alpha, s}_p)'$.	 

Now the theorem follows from Theorems \ref{NisBFS}, \ref{NMprime}, and Lemma  \ref{NNtilB}.
\end{proof}


We next prove Theorem \ref{isomorphism2}.

\begin{proof}[Proof of Theorem \ref{isomorphism2}]

We will need the following functional  which is defined by
$$\gamma_{\alpha,s}(u):=\inf\left\{\int f^sdx: 0\leq f\in L^s(\RR^n) \text{ and } G_\alpha*f\geq |u|^{\frac{1}{s}} \text{ q.e.} \right\}$$
for each q.e. defined function $u$ in $\RR^n$. Note that $\gamma_{\alpha,s}(tu)=|t|\gamma_{\alpha,s}(u)$ for all $t\in\RR$, and moreover,
$$\gamma_{\alpha,s}(u):=\inf\left\{t>0: |u|\in t\, H \right\},$$
where $H$ is the set of all nonnegative and q.e. defined functions $g$ in $\RR^n$ such that $g^{\frac{1}{s}}\leq G_\alpha *f$ q.e. for a function 
$f\in L^s(\RR^n)$, $f\geq 0$, such that $\norm{f}_{L^s}\leq 1$.
Note that if we define a nonlinear operator $T$ by $$T(h):= \left(G_\alpha*|h|^{\frac{1}{s}}\right)^{s}, \qquad \forall h\in L^1(\RR^n),$$
then by reverse Minkowski's inequality we see that $T$ is superadditive. 
This yields that the set  $H$ is convex (see \cite[Lemma 2.4]{KV}) and thus the functional $\gamma_{\alpha,s}(\cdot)$ is subadditive. 

On the other hand, we can  deduce from \cite[Proposition 7.4.1]{AH} the following equivalence 
\begin{equation}\label{funequ}
\int_0^\infty {\rm Cap}_{\alpha,s}(\{x: |u(x)|>t\}) dt \simeq \gamma_{\alpha,s}(u),
\end{equation}
which holds for all q.e. defined functions $u$ in $\RR^n$. In particular, we find that the space $L^1(C)$ is normable for all $\alpha>0, s>1$ and $\alpha s\leq n$.

Now using \eqref{funequ} and the subadditivity of $\gamma_{\alpha,s}(\cdot)$, and arguing as in the proof of Theorem \ref{NisBFS} we find
\begin{equation*}
\|f\|_{N^{\alpha,s}_q} \simeq \inf\Big\{\sum_{j}|c_j|: f= \sum_{j} c_j b_j \text{ a.e. where } \|b_j\|_{N^{\alpha,s}_q}\leq 1 \Big\},
\end{equation*}
 and
\begin{equation*}
\|f\|_{\widetilde{N}^{\alpha,s}_q} \simeq \inf\Big\{\sum_{j}|c_j|: f= \sum_{j} c_j b_j \text{ a.e. where } \|b_j\|_{\widetilde{N}^{\alpha,s}_q}\leq 1\Big\}.
\end{equation*}

At this point we can repeat the argument in the proof of Lemma  \ref{equivstrongsub} to obtain 
$N^{\alpha,s}_q \approx \widetilde{N}^{\alpha,s}_q \approx B^{\alpha,s}_q$. Combining this with Theorem  \ref{NMprime} we get the theorem.
\end{proof}

\section{Proof of Theorem \ref{triplettheorem}}

The proof of Theorem \ref{triplettheorem} is based on Hahn-Banach Theorem and the following lemma. The Morrey space version of this lemma can be found in \cite[Theorem 4.3]{ST}.

\begin{lemma}\label{finite}
Suppose that $p>1$, $\alpha>0, s>1$, and $\alpha s \leq n$. Let $\mathcal{F}$ be the  closer in $M^{\alpha,s}_p$ of the set of all finite linear combinations of the characteristic functions of sets of finite Lebesgue measure. 
Then $\mathcal{F}^{*} \approx \mathcal{N}^{\alpha,s}_{p'}$ in the sense that each linear functional   $L\in \mathcal{F}^{*}$ corresponds to a unique $g\in \mathcal{N}^{\alpha,s}_{p'}$ in such a way that 
\begin{align*}
L(f)= \int f(x)g(x)dx, \qquad \forall f\in \mathcal{F},  
\end{align*}
and $\|L\| \simeq |g\|_{\mathcal{N}^{\alpha,s}_{p'}}$.

\end{lemma}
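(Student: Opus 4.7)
The plan is to identify $\mathcal{F}^*$ with $(M^{\alpha,s}_p)'$ via a Radon-Nikodym construction, and then invoke the isomorphism $(M^{\alpha,s}_p)' \approx \mathcal{N}^{\alpha,s}_{p'}$ from Theorem \ref{isomorphism2}. One direction is routine: for any $g\in \mathcal{N}^{\alpha,s}_{p'}$, K\"othe duality makes $L_g(f):= \int fg\, dx$ a bounded linear functional on $\mathcal{F}$ with $\|L_g\|_{\mathcal{F}^*}\leq \|g\|_{(M^{\alpha,s}_p)'}\leq C\|g\|_{\mathcal{N}^{\alpha,s}_{p'}}$, and uniqueness of the representative follows from the fact that if $\int(g_1-g_2)\chi_E\, dx=0$ for every bounded $E$, then $g_1=g_2$ a.e.

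For the converse, given $L\in \mathcal{F}^*$, I would fix $N\geq 1$ and define $\mu_N(E):= L(\chi_E)$ on Borel subsets $E\subset B_N(0)$. Finite additivity is immediate. The main obstacle is to verify that $\mu_N$ is countably additive and absolutely continuous with respect to Lebesgue measure, which reduces to showing $\|\chi_{E_n}\|_{M^{\alpha,s}_p}\to 0$ whenever $E_n \subset B_N$ and $|E_n|\to 0$. This comes from the Sobolev-type lower bound $|K|^{1-\alpha s/n}\leq C\, {\rm Cap}_{\alpha,s}(K)$ of \eqref{aslessn}, together with \eqref{asequaln} when $\alpha s=n$: using $|K\cap E_n|\leq \min(|K|,|E_n|)$, one checks that both ranges $|K|\leq |E_n|$ and $|K|>|E_n|$ contribute at most $C|E_n|^{\alpha s/n}$ (respectively $C(\epsilon)|E_n|^{1-\epsilon}$ when $\alpha s=n$) to the supremum defining $\|\chi_{E_n}\|_{M^{\alpha,s}_p}^p$. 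The Radon-Nikodym theorem then yields $g_N\in L^1(B_N)$ representing $\mu_N$, and consistency across $N$ produces a single $g\in L^1_{\rm loc}(\RR^n)$ with $L(\chi_E)=\int_E g\, dx$ for every bounded measurable $E$.

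The identity $L(f)=\int f g\, dx$ extends from characteristic functions to simple functions by linearity, and then to all bounded measurable functions of compact support by uniform approximation (every such function lies in $\mathcal{F}$ since $\|\chi_{B_N}\|_{M^{\alpha,s}_p}<\infty$). To obtain $g\in (M^{\alpha,s}_p)'$ with $\|g\|_{(M^{\alpha,s}_p)'}\leq \|L\|$, I would test $L$ against $\phi := {\rm sgn}(g)\, f$ for nonnegative bounded $f$ of compact support: since $\phi\in \mathcal{F}$ and $\|\phi\|_{M^{\alpha,s}_p}=\|f\|_{M^{\alpha,s}_p}$, one has $\int f|g|\, dx = L(\phi)\leq \|L\|\, \|f\|_{M^{\alpha,s}_p}$, and monotone convergence extends this bound to all nonnegative $f\in M^{\alpha,s}_p$. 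The functional $L_g(f)=\int fg\, dx$ is then well-defined and continuous on $\mathcal{F}$, and agrees with $L$ on the dense subspace of simple functions of compact support, so $L=L_g$ on all of $\mathcal{F}$. Theorem \ref{isomorphism2} finally gives $\|g\|_{\mathcal{N}^{\alpha,s}_{p'}}\simeq \|g\|_{(M^{\alpha,s}_p)'}\leq \|L\|$, completing the proof.
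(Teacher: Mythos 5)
Your argument is correct and matches the paper's approach essentially step for step: the paper likewise uses \eqref{aslessn} and \eqref{asequaln} to bound $\|\chi_E\|_{M^{\alpha,s}_p}$ by a power of $|E|$, then invokes the Radon--Nikodym theorem together with the isomorphism $(M^{\alpha,s}_p)'\approx\mathcal{N}^{\alpha,s}_{p'}$ from \eqref{Mprime}. The only difference is one of exposition: the paper abbreviates the Radon--Nikodym construction by citing the analogous Morrey-space argument of Sawano and Tanaka, whereas you carry it out in full.
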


\begin{proof} If $E$ is a measurable set with finite measure then by \eqref{aslessn} and \eqref{asequaln} we have 
\begin{equation*}
\|\chi_{E}\|_{M^{\alpha,s}_{p}} \leq C |E|^\frac{\alpha s} {np} \text{~for~} \alpha s <n,
\end{equation*}
and
\begin{equation*}
\|\chi_{E}\|_{M^{\alpha,s}_{p}} \leq C |E|^{\frac{1}{2p}} \text{~for~} \alpha s =n.
\end{equation*}

Thus $\mathcal{F}$ is a nonempty closed subspace of $M^{\alpha,s}_{p}$, and 
\begin{align*}
\|\chi_{A}-\chi_{B}\|_{M^{\alpha,s}_{p}}&=\|\chi_{(A-B)}\|_{M^{\alpha,s}_{p}}\leq  C\, |A-B|^{\kappa}
\end{align*}
for any sets $B \subset A\subset\RR^n$. Here $\kappa=\alpha s/(np)$ if $\alpha s<n$ and $\kappa=1/(2p)$ if $\alpha s=n$. 

The lemma then follows from Radon-Nikodym Theorem and relation \eqref{Mprime} via the same argument as in the proof of \cite[Theorem  4.3]{ST}.
\end{proof}

\begin{proof}[Proof of Theorem \ref{triplettheorem}] It is obvious that the embedding $$\mathcal{N}^{\alpha,s}_{p'} \hookrightarrow \left(\mathring{M}^{\alpha,s}_{p}\right)^*$$ is continuous.
To show the converse, we first show that $\mathring{M}^{\alpha,s}_{p}$ is a subspace of $\mathcal{F}$, where $\mathcal{F}$ is as in Lemma \ref{finite}. To see this, let 
$v\in C_c$ and  let $Q$ be a cube that contains the support of $v$. For $i=1,2,\dots,$ partition $Q$ into $2^{n i}$ smaller disjoint cubes, say, $Q_{1},...,Q_{2^{n i}}$, and let 
\begin{align*}
v_{i}=\sum_{k=1}^{2^{n i}} v(x_k)\chi_{Q_{k}}.
\end{align*} 
Here $x_k$ is the center of $Q_k$, $k=1,\dots, 2^{n i}$. As $v$ is uniformly continuous we see that  $v_{i}\rightarrow v$ in  $L^\infty(\RR^n)$ and hence $v_{i}\rightarrow v$ in  $M^{\alpha,s}_{p}$ by, e.g., \eqref{LinftyM}.
Thus $v\in \mathcal{F}$ and the claim follows.

Now let $L$ be a bounded linear functional on  $\mathring{M}^{\alpha,s}_{p}$. By Hahn-Banach Theorem there exists bounded linear functional $\tilde{L}$ on $\mathcal{F}$ such that $\tilde{L}(v)=L(v)$ for all $v\in \mathring{M}^{\alpha,s}_{p}$
and $\|\tilde{L}\|=\|L\|$. Lemma \ref{finite} implies that there exists a function $g\in \mathcal{N}^{\alpha,s}_{p'}$ such that 
\begin{align*}
\tilde{L}(f)= \int f(x)g(x)dx, \qquad \forall f\in \mathcal{F},  
\end{align*}
and $\|\tilde{L}\| \simeq \|g\|_{\mathcal{N}^{\alpha,s}_{p'}}$. Thus we have 
\begin{align*}
L(v)= \int v(x)g(x)dx, \qquad \forall v\in \mathring{M}^{\alpha,s}_{p},  
\end{align*}
and $\|L\| \simeq \|g\|_{\mathcal{N}^{\alpha,s}_{p'}}$. Note that $g$ is unique since if $\int v gdx =\int v \tilde{g} dx$  for all $v\in C_c$ then it holds that $g=\tilde{g}$ a.e.
\end{proof}

\section{Proof of Theorems \ref{Mlocbound} and \ref{CZop}}

To prove Theorems \ref{Mlocbound} and \ref{CZop}, we need the following basic results about $A_1^{\rm loc}$ weights.
We first observe that if $w\in A_1^{\rm loc}$ then for any ball $B_r$ with radius $r\leq 1/2$ we have 
\begin{equation}\label{A1locinf}
\fint_{B_r} w(y) dy \leq 2^n [w]_{A_1^{\rm loc}}\,  \inf_{B_r} w.
\end{equation}

Let $B$ be any ball such that the radius of $B$ is $r(B)=1/2$. We claim that there is a constant $c(n)>0$ such that  
\begin{equation}\label{A1doubling}
\int_{(t+1/2)B} w(y) dy \leq c(n) [w]_{A_1^{\rm loc}} \int_{t B} w(y)dy, \qquad \forall t\geq 1. 
\end{equation}

Indeed, for any $t\geq 1$, let $A=(t+1/2)B\setminus t B$. We can cover $A$ by balls $B_k$ of radius $r(B_k)=1/2$ such that $|B_k\cap tB| \geq c(n)$, and 
$$\sum_k \chi_{B_k}(x)\leq N(n).$$ 

Thus using \eqref{A1locinf} we have 
\begin{align*}
\int_{A} w(y)dy &\leq \sum_{k}  \int_{B_k} w(y)dy\leq c [w]_{A_1^{\rm loc}} \sum_k \inf_{B_k} w\\
&\leq c [w]_{A_1^{\rm loc}} \sum_k \inf_{B_k\cap tB} w \leq c [w]_{A_1^{\rm loc}} \sum_k \fint_{B_k\cap tB} w(y) dy\\
&\leq c [w]_{A_1^{\rm loc}}  \int_{tB} w(y) \sum_k \chi_{B_k}(y) dy \leq c [w]_{A_1^{\rm loc}}  \int_{tB} w(y) dy.
\end{align*}

It follows that 
$$\int_{(t+1/2)B} w(y)dy\leq (c [w]_{A_1^{\rm loc}}+1) \int_{t B} w(y)dy,$$\
and the claim \eqref{A1doubling} follows.

Using \eqref{A1doubling} we see that if $w\in A_1^{\rm loc}$ with $[w]_{A_1^{\rm loc}}\leq {\bf \overline{c}}$, then for any ball $B_r$ with radius $r\leq R_0$, $R_0>0$, we have 
\begin{equation}\label{A1locinfR0}
\fint_{B_r} w(y) dy \leq C(n, R_0, {\bf \overline{c}}) \, \inf_{B_r} w.
\end{equation}

Now using \eqref{A1locinfR0} and  a minor modification of the proof of \cite[Lemma 1.1]{Ryc}, we obtain the following result. 

\begin{lemma}\label{weightextend} Let $w\in A_1^{\rm loc}$ with $[w]_{A_1^{\rm loc}}\leq {\bf \overline{c}}$ and $B=B_{R_0}(x_0)$, $R_0>0$. Then there exists a weight $\overline{w}\in A_1$ such that $w=\overline{w}$ in $B$ and $[\overline{w}]_{A_1}\leq c(n, R_0, {\bf \overline{c}})$. 
\end{lemma}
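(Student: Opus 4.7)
The plan is to mimic the extension construction in Rychkov's lemma, producing $\overline{w}$ as an explicit piecewise definition that agrees with $w$ on $B=B_{R_0}(x_0)$ and is built from local averages of $w$ outside $B$. First, I would fix a Whitney-type decomposition $\{Q_k\}$ of $\RR^n\setminus B$ into dyadic cubes (or balls) satisfying $c_1\ell(Q_k)\le \operatorname{dist}(Q_k,\partial B)\le c_2\ell(Q_k)$ with bounded overlap. For each $Q_k$ I would select a ``reflected'' cube $\widetilde{Q}_k\subset B$ of side length $\min\{\ell(Q_k),R_0/100\}$ whose distance to $Q_k$ is comparable to $R_0+\ell(Q_k)$, and then set
\begin{equation*}
\overline{w}(x)=w(x)\chi_B(x)+\sum_k\Big(\fint_{\widetilde{Q}_k}w(y)\,dy\Big)\chi_{Q_k}(x).
\end{equation*}
This $\overline{w}$ is locally integrable, agrees with $w$ on $B$, and is constant on each $Q_k$.

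The core of the proof is to verify the pointwise $A_1$ bound ${\bf M}\overline{w}(x)\le C(n,R_0,{\bf \overline{c}})\,\overline{w}(x)$ for a.e.\ $x$, by analyzing several cases according to the position of $x$ and the radius $r$ of the test ball $B_r(x)$. When $B_r(x)\subset B$ with $r\le R_0/2$, the inequality follows directly from \eqref{A1locinfR0}. When $r\ge R_0/2$, the test ball has diameter comparable to or larger than that of $B$, so the doubling consequence \eqref{A1doubling} forces every average $\fint_{B_r(x)}\overline{w}\,dy$ to be comparable to $\fint_B w\,dy$; in turn this quantity is comparable, up to factors depending only on $n,R_0,{\bf \overline{c}}$, to $\overline{w}(x)$ in either regime $x\in B$ or $x\in Q_k$. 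The intermediate case, where $r$ is small but $B_r(x)$ straddles $\partial B$, is handled by absorbing $B_r(x)$ into a slightly enlarged local ball on which \eqref{A1locinfR0} applies, together with the Whitney property $\ell(Q_k)\simeq\operatorname{dist}(Q_k,\partial B)$ to compare $\overline{w}$ on the outer side with values of $w$ on the inner side.

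The main obstacle will be establishing a uniform comparison between $\overline{w}(x)$ for $x\in Q_k$ and $\overline{w}(y)$ for $y$ in a neighboring Whitney cube or in the adjacent boundary strip of $B$; this is what prevents the extension from spiking at $\partial B$. I would resolve this by iterating the local doubling bound \eqref{A1doubling} a finite number of times along a chain of Whitney cubes connecting $Q_k$ to $\widetilde{Q}_k$, with the number of iterations controlled by $\log(1+R_0/\ell(Q_k))$ if $\ell(Q_k)\le R_0$ and bounded uniformly otherwise. The compounded constant depends only on $n,R_0$ and $[w]_{A_1^{\rm loc}}\le {\bf \overline{c}}$. Once these local comparisons are in hand, summing geometrically over Whitney scales closes the $A_1$ estimate and yields $[\overline{w}]_{A_1}\le c(n,R_0,{\bf \overline{c}})$ as required.
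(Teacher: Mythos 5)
Your overall idea of building an extension that is piecewise constant on a Whitney decomposition of $\RR^n\setminus B$ is plausible, but as written the construction breaks down near $\partial B$. The culprit is the placement of the ``reflected'' cube $\widetilde{Q}_k$: you require $\operatorname{dist}(\widetilde{Q}_k,Q_k)\simeq R_0+\ell(Q_k)$, which for small Whitney cubes near $\partial B$ places $\widetilde{Q}_k$ at a fixed distance $\simeq R_0$ away from $Q_k$, deep inside $B$. The value $\overline{w}\big|_{Q_k}=\fint_{\widetilde{Q}_k}w$ is then completely decoupled from the values of $w$ on the boundary strip of $B$ adjacent to $Q_k$. This is fatal for the $A_1$ property: take $x\in Q_k$ with $\ell(Q_k)$ tiny, and a ball $B_r(x)$ with $r\simeq\ell(Q_k)$ that crosses $\partial B$. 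The contribution to $\fint_{B_r(x)}\overline{w}$ from $B_r(x)\cap B$ can be enormous if $w$ has a local peak on that boundary strip (which $A_1^{\rm loc}$ permits; $A_1$ only rules out small \emph{downward} oscillations, not upward spikes on small balls), while $\overline{w}(x)=\fint_{\widetilde{Q}_k}w$ stays comparable to a ``typical'' or even minimal value of $w$ on $B$. Nothing in the $A_1^{\rm loc}$ hypothesis forces $\fint_{\widetilde{Q}_k}w$ to dominate $w$ near a far-away boundary point. The ``slightly enlarged local ball'' you invoke for the straddling case has radius $\simeq\ell(Q_k)$ and therefore does not reach $\widetilde{Q}_k$, so \eqref{A1locinfR0} applied to it yields no link between the two sides.

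The proposed fix by chaining also does not close the gap. You iterate \eqref{A1doubling} along a chain of length $\log(1+R_0/\ell(Q_k))$, but each application of \eqref{A1doubling} carries a fixed factor $c(n)[w]_{A_1^{\rm loc}}>1$, so the compounded constant is of size $(c(n)\,{\bf \overline{c}})^{\log(1+R_0/\ell(Q_k))}$, which is \emph{unbounded} as $\ell(Q_k)\to 0$. The resulting estimate therefore does not give a constant depending only on $n,R_0,{\bf \overline{c}}$. The paper sidesteps all of this by citing Rychkov's Lemma 1.1: there $\overline{w}$ is produced by periodizing $w|_{B}$ through even reflections across the faces of a cube of side $\simeq R_0$ enclosing $B$, so that every exterior point $x$ is paired with a folded preimage $x'\in B$ at the \emph{same scale of distance} to $\partial B$, and $\overline{w}(x)=w(x')$ inherits the local oscillation of $w$ ball by ball with no chaining across scales. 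Your Whitney framework could be made to work if $\widetilde{Q}_k$ were instead chosen as a cube of side $\simeq\min\{\ell(Q_k),R_0\}$ sitting just inside $B$ at distance $\simeq\ell(Q_k)$ from $Q_k$ (a genuine reflection), because then a single ball of radius $\simeq\ell(Q_k)\leq R_0$ covers $Q_k$, $\widetilde{Q}_k$, and the adjacent boundary strip, and \eqref{A1locinfR0} applied to that one ball gives the needed two-sided comparison without any iterated chaining.
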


As a consequence of \cite[Lemma 2.11]{Ryc} and \eqref{A1locinfR0}, we also have the following weighted bound for ${\bf M}^{\rm loc}$.
\begin{lemma}\label{Mlocweightedbound} For any $p>1$ and $w\in A_1^{\rm loc}$ with $[w]_{A_1^{\rm loc}}\leq {\bf \overline{c}}$, it holds that 
	$$\int_{\RR^n} {\bf M}^{\rm loc}(f)^{p} w dx \leq C(n,p, {\bf \overline{c}}) \int_{\RR^n} |f|^p w dx.$$
\end{lemma}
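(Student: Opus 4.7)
The plan is to reduce the weighted bound for ${\bf M}^{\rm loc}$ to the classical Muckenhoupt weighted bound for the standard (global) maximal function ${\bf M}$ by combining a bounded-overlap cover of $\RR^n$ with the extension result of Lemma \ref{weightextend}. First I would fix a countable family of balls $\{B_{1/2}(z_k)\}_{k\in \NN}$ covering $\RR^n$ in such a way that the enlarged family $\{B_{3/2}(z_k)\}$ has bounded overlap, i.e.\ $\sum_k \chi_{B_{3/2}(z_k)}(x) \leq N(n)$ for all $x\in\RR^n$. Such a covering can be obtained from a maximal packing of pairwise disjoint balls of radius $1/4$.

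Next, for each $k$ I apply Lemma \ref{weightextend} on the ball $B_2(z_k)$ to produce an extension $\overline{w}_k \in A_1(\RR^n)$ such that $\overline{w}_k = w$ on $B_2(z_k)$ and $[\overline{w}_k]_{A_1} \leq c(n,{\bf \overline{c}})$, with the constant independent of $k$. The key localization observation is that for every $x \in B_{1/2}(z_k)$ and every $r \in (0,1]$, the ball $B_r(x)$ is contained in $B_{3/2}(z_k) \subset B_2(z_k)$, so
\begin{equation*}
{\bf M}^{\rm loc} f(x) \;\leq\; {\bf M}\bigl( f \chi_{B_{3/2}(z_k)}\bigr)(x), \qquad x \in B_{1/2}(z_k).
\end{equation*}

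Since $\overline{w}_k \in A_1 \subset A_p$, the classical Muckenhoupt weighted bound for the global Hardy--Littlewood maximal function gives
\begin{equation*}
\int_{B_{1/2}(z_k)} ({\bf M}^{\rm loc} f)^{p} w \, dx \;\leq\; \int_{\RR^n} {\bf M}\bigl(f \chi_{B_{3/2}(z_k)}\bigr)^{p} \overline{w}_k \, dx \;\leq\; C(n,p,{\bf \overline{c}}) \int_{B_{3/2}(z_k)} |f|^{p} w\, dx,
\end{equation*}
where in the last step I used that $\overline{w}_k$ agrees with $w$ on $B_2(z_k)$. Summing in $k$ and invoking the bounded overlap property yields the desired bound with constant $C(n,p,{\bf \overline{c}})$.

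The main obstacle, such as it is, lies in the bookkeeping: verifying that the Muckenhoupt constant from the global weighted maximal theorem depends only on $p$ and $[\overline{w}_k]_{A_1}$ (which is standard), and that the local $A_1^{\rm loc}$ characteristic ${\bf \overline{c}}$ controls the $A_1$ characteristic of the extensions uniformly in $k$ (which is exactly the content of Lemma \ref{weightextend}, itself a consequence of \eqref{A1locinfR0} via Rychkov's argument). Once these pieces are in place, the covering-and-sum step is routine.
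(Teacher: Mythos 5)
Your proof is correct and takes essentially the same approach the paper has in mind, which simply cites \cite[Lemma 2.11]{Ryc} together with \eqref{A1locinfR0} without reproducing the argument. Your bounded-overlap covering by balls $B_{1/2}(z_k)$, the extension of $w$ to a global $A_1$ weight on each dilate via Lemma \ref{weightextend}, the pointwise localization ${\bf M}^{\rm loc}f\le {\bf M}(f\chi_{B_{3/2}(z_k)})$ on $B_{1/2}(z_k)$, the classical Muckenhoupt bound, and the final bounded-overlap sum is precisely the standard route behind the cited lemma, so you have in effect supplied the details the paper delegates to Rychkov.
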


	In fact, Lemma \ref{Mlocweightedbound}  also holds for  $w$ in the larger class of $A_p^{\rm loc}$ weights (see \cite[Lemma 2.11]{Ryc}).

We are now ready to prove Theorems \ref{Mlocbound} and \ref{CZop}.

\begin{proof}[Proof of Theorem \ref{Mlocbound}] This theorem follows from  Theorem \ref{isomorphism2} and Lemma \ref{Mlocweightedbound}.
\end{proof}	

\begin{proof}[Proof of Theorem \ref{CZop}] Let $w\in A_1^{\rm loc}$ with $[w]_{A_1^{\rm loc}}\leq {\bf \overline{c}}$ and  suppose that ${\rm supp}(f)\subset B_{R_0}(x_0)$ for some $ R_0>0$. 
By Lemma \ref{weightextend}, there exists a weight $\overline{w}\in A_1$ with $[\overline{w}]_{A_1}\leq C(n, R_0, {\bf \overline{c}})$ such that 
$\overline{w}=w$ in $B_{R_0}(x_0)$.	Thus it follows from the hypothesis of the theorem that 
\begin{align*}
\int |T(f)|^{q}\chi_{B_{R_0}(x_0)} w dx & \leq  \int |T(f)|^{q} \overline{w} dx\\
& \leq C(n,q,{\bf \overline{c}}) \int |f|^{q} \overline{w} dx\\
&= C(n,q,{\bf \overline{c}}) \int |f|^{q} w dx.
\end{align*}

Theorem \ref{CZop} now follows from Theorems \ref{isomorphism2} and \ref{firsttheorem}.
\end{proof}	

\section{The homogeneous case}\label{homogeneoussetting}

Let $p\geq 1$, $\alpha >0$,  and $s>1$ be such that $\alpha s <n$. The homogeneous version of $M^{\alpha,s}_p$, denoted as $\dot{M}^{\alpha,s}_p$, is 
the space of functions $f\in L^p_{\rm loc}(\mathbb{R}^n)$
such that the trace inequality 
\begin{align}\label{trace-homo}
	\left(\int_{\mathbb{R}^n} |u|^s |f|^p dx\right)^{\frac{1}{p}} \leq C \|u\|_{\dot{H}^{\alpha,s}}^{\frac{s}{p}}
\end{align} 
holds for all $u\in C_c^\infty(\mathbb{R}^n)$. A norm  of a function $f\in \dot{M}^{\alpha,s}_p$  is defined as the least possible constant $C$ in the above inequality. In \eqref{trace-homo}, $\dot{H}^{\alpha,s}$ stands for 
the space of Riesz potentials which consists of functions of the form $u= I_{\alpha}* f$ for some $f\in L^s(\RR^n)$, and 
$\|u\|_{\dot{H}^{\alpha, s}}=\|f\|_{L^s}.$
 Here $I_{\alpha}$, $\alpha\in (0,n)$, is the Riesz kernel defined as the inverse Fourier transform of $|\xi|^{\alpha}$ (in the distributional sense), and explicitly we have 
$I_\alpha(x)= \gamma(n,\alpha) |x|^{\alpha-n}$, where $\gamma(n, \alpha)=\Gamma(\tfrac{n-\alpha}{2})/[\pi^{n/2} 2^\alpha \Gamma(\tfrac{\alpha}{2})]$. It is known that (see \cite{MH}) $\dot{H}^{\alpha,s}$ is the completion of $C_c^\infty(\mathbb{R}^n)$ with respect to the norm 
$$\|u\|_{\dot{H}^{\alpha,s}}=\|(-\Delta)^{\frac{\alpha}{2}}u \|_{L^s(\mathbb{R}^n)}= \| \mathcal{F}^{-1}(|\xi|^{\alpha}\mathcal{F}(u))\|_{L^s(\RR^n)} $$

In the case  $\alpha =k\in \mathbb{N}$ and $s>1$  we have $\dot{H}^{k,s} \approx \dot{W}^{k,s}$, where $\dot{W}^{k,s}=\dot{W}^{k,s}(\mathbb{R}^n)$ is the homogeneous Sobolev space with norm being defined as
\begin{align*}
	\|u\|_{\dot{W}^{k,s}}=\sum_{|\beta|= k}\|D^{\beta}u\|_{L^{s}}.
\end{align*}

The capacity associated to $\dot{H}^{\alpha,s}$ is the Riesz capacity defined for each set $E\subset\RR^n$ by 
\begin{equation*}
{\rm cap}_{\alpha, \,s}(E):=\inf\Big\{\|f\|_{L^{s}}^{s}: f\geq0, I_{\alpha}*f\geq 1 {\rm ~on~} E \Big\}.
\end{equation*}

It is known that   the norm of a function $f\in \dot{M}^{\alpha,s}_p$ is equivalent to the quantity
$$\sup_{K}\left(\frac{\int_{K}|f(x)|^{p}dx}{\text{cap}_{\alpha,s}(K)}\right)^{1/p}, $$
where the supremum is taken over all compact sets $K\subset\RR^n$ (see \cite{MS2, AH}). For this reason, we shall use this quantity as the norm for $\dot{M}^{\alpha,s}_p$, $p\geq 1$, in what follows.

For $s=2$ and $\alpha\in (0,1]$, it is known that  ${\rm cap}_{\alpha,s}(\cdot)$ is strongly subadditive (see \cite[pp. 141-145]{Lan}).
On the other hand, for $\alpha=1$, ${\rm cap}_{1,s}(\cdot)$ is equivalent to the capacity $c_{1, s}(\cdot)$, which is a strongly subadditive capacity (see \cite[Theorem 2.2]{HKM}).
Here for each compact set $K\subset\RR^n$, 
$$c_{1, s}(K):=\inf\left\{  \int |\nabla u|^s dx: \varphi\in C_c^\infty, \varphi\geq 1 \text{~on~} K \right\},$$
and $c_{1, s}(\cdot)$ is extended to all sets $E$ as in \eqref{extension}. 

Note that for all balls $B_r$, $r>0$, we have 
\begin{equation*}
{\rm cap}_{\alpha,s}(B_r)\simeq r^{n-\alpha s},
\end{equation*}
and  for any measurable  set $E$ it follows from the Sobolev Embedding Theorem that 
\begin{equation*}
|E|^{1-\alpha s/n} \leq C\, {\rm cap}_{\alpha,s}(E).
\end{equation*}

This lower bound implies that  $L^{\frac{n}{\alpha s},\infty}(\RR^n)\hookrightarrow \dot{M}^{\alpha, s}_p$ with the estimate
\begin{equation}\label{LweakM-homo}
\|f\|_{\dot{M}^{\alpha, s}_p}\leq C \|f\|_{L^{\frac{n p}{\alpha s},\infty}(\RR^n)}.
\end{equation}

The capacity ${\rm cap}_{\alpha,s}$ is quasiadditive in the following sense. There exists a constant $C=C(n,\alpha,s)>0$ such that for any set $E$ we have (see \cite[Eq.(7)]{Ad3} and \cite{Ad2})
$$\sum_{j=-\infty}^{\infty} {\rm cap}_{\alpha,s}(E \cap \{2^{j-1} \leq |x|< 2^j \}) \leq C\, {\rm cap}_{\alpha,s}(E).$$

The homogeneous version of $L^1(C)$ is $\dot{L}^1(C)$ which is defined analogously using the Riesz capacity ${\rm cap}_{\alpha,s}$. Likewise, the homogeneous version of $\mathcal{L}^1(C)$ is $\mathcal{\dot{L}}^1(C)$ which consists of 
measurable functions $w$ such that 
$$\|w\|_{\mathcal{\dot{L}}^1(C)} := \sup_{g}\int |g(x)| |w(x)| dx <+\infty, $$ 
where the supremum is taken over all $g\in \dot{M}^{\alpha,s}_1$ such that $\norm{g}_{\dot{M}^{\alpha,s}_1}\leq 1$.  That is, $\mathcal{\dot{L}}^1(C)$ is the K\"othe dual of $\dot{M}^{\alpha,s}_1$.
It is easy to see that the quasinormed space  $\dot{L}^1(C)$ is continuously embedded into the Banach space $\mathcal{\dot{L}}^1(C)$.

Let  $E$  be a subset of $\RR^n$ such that  $0<\text{cap}_{\alpha,s}(E)<\infty$. By \cite[Theorems 2.5.6 and 2.6.3 ]{AH}, the capacitary measure for $E$
exists as a  nonnegative measure $\mu^{E}$ with ${\rm supp}(\mu^{E}) \subset \overline{E}$  such that the function 
 $V^{E}=I_\alpha*((I_\alpha*\mu)^{\frac{1}{s-1}})$ satisfies the following properties:

\begin{equation*} 
\mu^E(\overline{E})={\rm Cap}_{\alpha,s}(E)=\int_{\RR^n} V^E d\mu^E= \int_{\RR^n} (I_\alpha *\mu^{E})^{\frac{s}{s-1}} dx,
\end{equation*}
\begin{equation*}
V^E\geq 1 \quad \text{quasieverywhere on }E, \quad {\rm and} \quad V^E\leq A \quad \text{on } \RR^n.
\end{equation*}

We have the following homogeneous version of Lemma \ref{L1CforVEt}.

\begin{lemma}
	 Let $E$, $\mu^E$, and $V^E$ be as above with $0<\alpha s < n$. If  $\delta\in (1, n/(n-\alpha))$ for $s< 2$ and $\delta\in (s-1, n(s-1)/(n-\alpha s))$ 
	for $s\geq 2$, then the function $(V^E)^\delta \in A_1$ with $[(V^E)^\delta]_{ A_1}\leq c(n, \alpha, s, \delta)$. Moreover, $(V^E)^\delta\in \dot{L}^1(C)$ with 
	$\|(V^E)^\delta\|_{\dot{L}^1(C)} \leq C\, {\rm cap}_{\alpha, s}(E)$. 	
\end{lemma}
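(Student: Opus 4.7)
The plan is to follow the structure of Lemma \ref{L1CforVEt} with modifications appropriate to the homogeneous setting, where the Bessel kernel $G_\alpha$ is replaced by the scale-invariant Riesz kernel $I_\alpha$ and $A_1^{\rm loc}$ is promoted to full $A_1$. First I would establish the homogeneous analog of Theorem \ref{mainA1loc}, namely, if $V=I_\alpha*(I_\alpha*\mu)^{\frac{1}{s-1}}$ for a nonnegative measure $\mu$, then for the stated range of $\delta$ we have ${\bf M}(V^\delta)(x_0)\leq A\, V^\delta(x_0)$ for all $x_0\in\RR^n$, which yields $(V^E)^\delta\in A_1$ with the claimed bound. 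Then I would verify the $\dot{L}^1(C)$ estimate via a layer-cake decomposition and capacitary upper bounds on the level sets $\{V^E>\rho\}$.

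For the pointwise maximal bound, the case $s\leq 2-\alpha/n$ with $\delta\in(1,n/(n-\alpha))$ is direct from Minkowski's inequality once one has the uniform-in-$r$ estimate $\fint_{B_r(x_0)} I_\alpha(y-z)^\delta\, dy\leq C\, I_\alpha(x_0-z)^\delta$. This in turn follows from $I_\alpha(x)=c|x|^{\alpha-n}$: when $|x_0-z|\geq 2r$ we have $|y-z|\simeq|x_0-z|$ on $B_r(x_0)$, and when $|x_0-z|<2r$ one uses $B_r(x_0)\subset B_{3r}(z)$ and $(n-\alpha)\delta<n$ to reduce the average to $r^{(\alpha-n)\delta}\leq C|x_0-z|^{(\alpha-n)\delta}$ (the latter since the exponent is negative and $r\geq|x_0-z|/2$). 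For $s>2-\alpha/n$, I would invoke the Havin--Maz'ya--Wolff representation
\begin{equation*}
V(x)\simeq \int_0^\infty \left(\frac{\mu(B_\rho(x))}{\rho^{n-\alpha s}}\right)^{\frac{1}{s-1}}\frac{d\rho}{\rho}
\end{equation*}
and proceed exactly as in the $s>2-\alpha/n$ portion of the proof of Theorem \ref{mainA1loc}, splitting the $\rho$-integral at $\rho=r$, applying Minkowski to the inner ($\rho<r$) piece with a small $\rho^\epsilon$ gain, and using $B_\rho(x)\subset B_{2\rho}(x_0)$ for the outer ($\rho\geq r$) piece. Crucially, the $V_1/V_2$ decomposition of the earlier proof, which was forced by the Bessel kernel's local/nonlocal dichotomy, is now unnecessary: the homogeneity $I_\alpha(\lambda x)=\lambda^{\alpha-n}I_\alpha(x)$ makes every scale behave the same way, so the estimate holds for all $r>0$ with no localization.

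For the second statement, $V^E$ is quasicontinuous by \cite[Proposition 6.1.2]{AH}, and using $V^E\leq A$ we may truncate the layer cake:
\begin{equation*}
\|(V^E)^\delta\|_{\dot{L}^1(C)}=\delta\int_0^A {\rm cap}_{\alpha,s}(\{V^E>\rho\})\,\rho^{\delta-1}\,d\rho.
\end{equation*}
The capacitary upper bound ${\rm cap}_{\alpha,s}(\{V^E>\rho\})\leq C\,{\rm cap}_{\alpha,s}(E)\,\rho^{-\max\{s-1,1\}}$ is obtained identically to the Bessel case: for $s\geq 2$ via \cite[Proposition 4.4]{AM} together with $\mu^E(\RR^n)={\rm cap}_{\alpha,s}(E)$, and for $1<s<2$ by H\"older's inequality with exponents $s-1$ and $(s-1)/(s-2)$ applied to $\int V^E\,d\nu$ where $\nu$ is the capacitary measure for $\{V^E>\rho\}$, followed by the self-bound $\int V^\nu\,d\mu^E\leq C\mu^E(\RR^n)$. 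The range imposed on $\delta$ (namely $\delta>1$ for $s<2$ and $\delta>s-1$ for $s\geq 2$) guarantees integrability at $\rho=0$, while the truncation at $\rho=A$ handles the top.

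The main obstacle is ensuring that the $A_1$ constant produced in the first step is genuinely scale-uniform; this is exactly where the switch from Bessel to Riesz pays off, since $I_\alpha$ is homogeneous and thus the pointwise averaging estimate $\fint_{B_r(x_0)}I_\alpha^\delta(y-z)dy\lesssim I_\alpha^\delta(x_0-z)$ is invariant under $r\mapsto \lambda r$, $x_0\mapsto\lambda x_0$, $z\mapsto\lambda z$. Equivalently, one may reduce any scale $r>0$ to $r=1$ by the rescaling $\mu_\lambda(E)=\lambda^{-(n-\alpha s)}\mu(\lambda E)$, under which both $V$ and ${\rm cap}_{\alpha,s}$ transform homogeneously, so no additional ingredients beyond those already used in the Bessel case are needed.
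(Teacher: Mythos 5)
Your plan is sound, but it takes a genuinely different route from the paper: the paper dispatches this lemma in a single sentence by citing Lemmas 2 and 3 of \cite[Sub-section 2.6.3]{MS1}, whereas you give a self-contained adaptation of the Bessel-case argument (Theorem \ref{mainA1loc} together with Lemma \ref{L1CforVEt}) to the Riesz setting. Your central structural observation is correct: the scale invariance of $I_\alpha$ renders the near/far ($V_1/V_2$) decomposition of Theorem \ref{mainA1loc} unnecessary, because the averaging estimate $\fint_{B_r(x_0)}I_\alpha(y-z)^\delta\,dy\lesssim I_\alpha(x_0-z)^\delta$ holds uniformly for all $r>0$ once $\delta(n-\alpha)<n$, whence one obtains genuine $A_1$ rather than $A_1^{\rm loc}$. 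Your Minkowski argument for the low range of $\delta$ is correct, and for $s>2-\alpha/n$ the pointwise Wolff estimate you invoke (the upper bound $V\lesssim W$, not merely the standard lower bound from \cite[Theorem 2]{Ad1}) is the homogeneous, global version of the localized estimate \eqref{VJ} in the paper's proof of Theorem \ref{mainA1loc}; it can be derived by repeating the $I_1$, $I_2$, Hardy-inequality computation from that proof with $G_\alpha$ replaced by $I_\alpha$, or simply cited from \cite{MS1}. The $\dot{L}^1(C)$ bound, via ${\rm cap}_{\alpha,s}(\{V^E>\rho\})\lesssim{\rm cap}_{\alpha,s}(E)\,\rho^{-\max\{s-1,1\}}$ and the truncated layer cake, carries over from the Bessel case verbatim. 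What your route buys is transparency and self-containment, making explicit the parallelism with the inhomogeneous proof and exactly where homogeneity simplifies matters; what the paper's citation buys is brevity and delegation to \cite{MS1}, where this machinery is developed for Riesz potentials in full. One detail to keep straight: your $A_1$ case split is by $s\leq 2-\alpha/n$ versus $s>2-\alpha/n$, while the lemma's hypothesis splits by $s<2$ versus $s\geq 2$; this is harmless because $n(s-1)/(n-\alpha s)\geq n/(n-\alpha)$ precisely when $s\geq 2-\alpha/n$, so the admissible $\delta$-interval asserted in the lemma is in every case contained in the interval your argument actually covers.
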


This lemma follows from Lemmas 2 and 3 of \cite[Sub-section 2.6.3]{MS1}.  The following `renorming' theorem for  $\dot{M}^{\alpha,s}_p$ can be proved as in the inhomogeneous setting.
\begin{theorem}
	For $p>1$ and $\alpha>0, s>1$, with $\alpha s < n$, we have 
	\begin{align}\label{dotM-norm}
	\|f\|_{\dot{M}^{\alpha,s}_p} \simeq \sup_{w}\left(\int_{{\RR}^{n}}|f(x)|^{p} w(x)dx\right)^{1/p},
	\end{align}
	where the supremum is taken over all nonnegative  $w\in \dot{L}^{1}(C)\cap A_1$ with $\|w\|_{\dot{L}^{1}(C)}\leq 1$ and $[w]_{A_1}\leq {\bf \overline{c}}(n,\alpha,s)$ for a constant ${\bf \overline{c}}(n,\alpha,s)\geq 1$.  The equivalence \eqref{dotM-norm} 
also holds if we replace $\dot{L}^{1}$ by $\mathcal{\dot{L}}^{1}$. Moreover, we have 
\begin{align*}
	\|f\|_{\dot{M}^{\alpha,s}_p} = \sup_{w}\left(\int_{\RR^{n}}|f(x)|^{p} w(x)dx\right)^{1/p},
	\end{align*}
where the supremum is taken over all weights  $w$ such that $w$ is defined ${\rm cap}_{\alpha,s}$-quasieverywhere and 
$\int_{0}^\infty {\rm cap}_{\alpha,s}(\{x\in \RR^n: w(x)>t\}) dt \leq 1$.
\end{theorem}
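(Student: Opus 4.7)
The three statements are the homogeneous analogs of Theorems \ref{firsttheorem}, \ref{NoA1Th}, and \ref{NmathcaloA1Th}, and the proofs run along exactly parallel lines after replacing ${\rm Cap}_{\alpha,s}$, $G_\alpha$, $L^1(C)$, $\mathcal{L}^1(C)$ and $A_1^{\rm loc}$ by ${\rm cap}_{\alpha,s}$, $I_\alpha$, $\dot L^1(C)$, $\mathcal{\dot L}^1(C)$ and $A_1$, respectively. I would dispatch the three variants together, handling the upper and lower bounds for the supremum in turn.

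For the upper bound the cleanest route is layer-cake applied directly to the definition of $\|f\|_{\dot M^{\alpha,s}_p}$. For any q.e.-defined $w\ge 0$ with $\int w\,dC\le 1$,
$$\int |f|^p w\,dx=\int_0^\infty\!\Big(\int_{\{w>t\}}|f|^p\,dx\Big)dt\le \|f\|^p_{\dot M^{\alpha,s}_p}\int_0^\infty{\rm cap}_{\alpha,s}(\{w>t\})\,dt\le \|f\|^p_{\dot M^{\alpha,s}_p},$$
where outer regularity of ${\rm cap}_{\alpha,s}$ reduces the inner bound to compact sets on which $\|f\|_{\dot M^{\alpha,s}_p}$ is defined. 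This establishes ``$\le$'' with constant $1$ in the equality variant. The $\dot L^1(C)$-version is subsumed since any admissible $w$ has $\int w\,dC=\|w\|_{\dot L^1(C)}\le 1$, and the $\mathcal{\dot L}^1(C)$-version follows directly from the definition of $\mathcal{\dot L}^1(C)$ as the K\"othe dual of $\dot M^{\alpha,s}_1$, giving $\int|f|^p w\,dx\le \||f|^p\|_{\dot M^{\alpha,s}_1}\|w\|_{\mathcal{\dot L}^1(C)}=\|f\|^p_{\dot M^{\alpha,s}_p}\|w\|_{\mathcal{\dot L}^1(C)}$.

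For the reverse inequality I test against the capacitary potentials of compact sets. Fix an admissible $\delta$ and for each compact $E$ with $0<{\rm cap}_{\alpha,s}(E)<\infty$ set $w_E:=(V^E)^\delta/{\rm cap}_{\alpha,s}(E)$. The homogeneous lemma stated just before the theorem provides $[w_E]_{A_1}\le c(n,\alpha,s,\delta)$ and $\|w_E\|_{\dot L^1(C)}\le C$, so a fixed normalization of $w_E$ is admissible in both the $\dot L^1$- and $\mathcal{\dot L}^1$-variants. Using $V^E\ge 1$ q.e.\ on $E$ and hence $\chi_E\le (V^E)^\delta$ q.e.,
$$\frac{1}{{\rm cap}_{\alpha,s}(E)}\int_E|f|^p\,dx\le \int|f|^p w_E\,dx\lesssim \sup_w\int|f|^p w\,dx.$$
Taking the supremum over compact $E$ produces the equivalence ``$\gtrsim$'' in both variants. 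For the equality variant the same step is carried out with the simpler $w_E:=\chi_E/{\rm cap}_{\alpha,s}(E)$, which satisfies $\int w_E\,dC=1$ by layer-cake, thereby saturating the upper bound and producing exact equality.

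The argument contains no serious obstacle: the only nontrivial ingredients, namely the homogeneous capacitary-potential estimates attributed to Lemmas 2 and 3 of Subsection 2.6.3 of \cite{MS1} and the K\"othe-dual definition of $\mathcal{\dot L}^1(C)$, are already provided. The remaining work, namely verifying outer regularity in the layer-cake step and tracking the constants in the normalization of $w_E$, parallels Lemma \ref{L1CforVEt} and its use in the proofs of Theorems \ref{firsttheorem} and \ref{NoA1Th} verbatim.
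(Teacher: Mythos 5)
Your proposal is correct and follows essentially the same approach the paper indicates—namely, the paper simply states that the homogeneous version "can be proved as in the inhomogeneous setting," and your three-part argument (layer-cake / K\"othe duality for the upper bound; testing against normalized capacitary potentials $(V^E)^\delta/{\rm cap}_{\alpha,s}(E)$ for the $\dot L^1\cap A_1$ and $\mathcal{\dot L}^1\cap A_1$ lower bounds; testing against $\chi_E/{\rm cap}_{\alpha,s}(E)$ for the exact-equality variant) mirrors the proofs of Theorems \ref{firsttheorem}, \ref{NoA1Th}, and \ref{NmathcaloA1Th} with ${\rm Cap}_{\alpha,s}$, $G_\alpha$, $A_1^{\rm loc}$ replaced by ${\rm cap}_{\alpha,s}$, $I_\alpha$, $A_1$.
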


The homogeneous versions of $N_q^{\alpha,s}, \mathcal{N}_q^{\alpha,s}, \widetilde{N}_q^{\alpha,s}$, and $B_q^{\alpha,s}$ are denoted by $\dot{N}_q^{\alpha,s}, \mathcal{\dot{N}}_q^{\alpha,s}, \dot{\widetilde{N}}_q^{\alpha,s}$, and $\dot{B}_q^{\alpha,s}$, respectively. They are defined similarly using ${\rm cap}_{\alpha, s}$ in place of 
${\rm Cap}_{\alpha, s}$. We have the following relations.

\begin{theorem}
	Let $p>1$ and $\alpha>0, s>1$, with $\alpha s < n$. Then
\begin{equation}\label{dualre}
\left(\dot{N}_{p'}^{\alpha,s}\right)^* \approx  \left(\mathcal{\dot{N}}_{p'}^{\alpha,s}\right)^*  \approx \dot{M}^{\alpha,s}_p=\left(\dot{\widetilde{N}}_{p'}^{\alpha,s}\right)^*= \left(\dot{B}_{p'}^{\alpha,s}\right)^*=\left[(\dot{M}^{\alpha,s}_p)'\right]^{*}.
\end{equation}

Moreover, the spaces $\mathcal{\dot{N}}^{\alpha, s}_{p'}$  and $(\dot{M}^{\alpha, s}_p)'$ are Banach function spaces  and $\mathcal{\dot{N}}^{\alpha, s}_{p'} \approx (\dot{M}^{\alpha, s}_p)'$.	Additionally, if 
 ${\rm cap}_{\alpha, s}$ is strongly subadditive then  $\dot{N}^{\alpha, s}_{p'}$,  $\dot{\widetilde{N}}^{\alpha, s}_{p'}$, and $\dot{B}^{\alpha, s}_{p'}$ are also Banach function spaces, and
\begin{equation*}
\mathcal{\dot{N}}^{\alpha, s}_{p'} \approx  \dot{N}^{\alpha, s}_{p'} \approx (\dot{M}^{\alpha, s}_p)' = \dot{\widetilde{N}}^{\alpha, s}_{p'} =  \dot{B}^{\alpha, s}_{p'}.
\end{equation*}

In general, if no strong subadditivity is assumed on ${\rm cap}_{\alpha, s}$ then we have 
\begin{equation}\label{chain-homo}
\mathcal{\dot{N}}^{\alpha, s}_{p'} \approx  \dot{N}^{\alpha, s}_{p'} \approx (\dot{M}^{\alpha, s}_p)' \approx \dot{\widetilde{N}}^{\alpha, s}_{p'} \approx  \dot{B}^{\alpha, s}_{p'}.
\end{equation}

\end{theorem}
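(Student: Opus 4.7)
My plan is to obtain each piece of the theorem by transplanting the inhomogeneous arguments (Theorems \ref{second}, \ref{NoA1Th}, \ref{BspaceTh}, \ref{NmathcaloA1Th}, \ref{isomorphism}, and \ref{isomorphism2}, together with Lemmas \ref{equivstrongsub} and \ref{NNtilB}) into the homogeneous setting, replacing $G_\alpha$ by $I_\alpha$, $\mathrm{Cap}_{\alpha,s}$ by $\mathrm{cap}_{\alpha,s}$, $A_1^{\mathrm{loc}}$ by the genuine $A_1$ class, and $L^1(C),\mathcal{L}^1(C)$ by $\dot{L}^1(C),\mathcal{\dot{L}}^1(C)$. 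The stated homogeneous analog of Lemma \ref{L1CforVEt} (via Lemmas~2 and 3 of \cite[Sub-section 2.6.3]{MS1}) and the homogeneous renorming theorem displayed just above the statement serve as substitutes for their nonhomogeneous counterparts, while the dyadic quasiadditivity of $\mathrm{cap}_{\alpha,s}$ over annuli $\{2^{j-1}\le|x|<2^j\}$ replaces \eqref{QAC}. Because $I_\alpha$ is $(\alpha-n)$-homogeneous and the Riesz capacity scales cleanly, all the local restrictions (balls of radius $\le 1$, $A_1^{\mathrm{loc}}$ classes, etc.) can be dropped, so the homogeneous proofs are structurally simpler.

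First, I would establish the duality chain \eqref{dualre} by repeating, essentially verbatim, the proofs of Theorems \ref{second}, \ref{NoA1Th}, \ref{BspaceTh}, and \ref{NmathcaloA1Th}. In each direction one uses H\"older's inequality, the homogeneous renorming formula for $\|f\|_{\dot{M}^{\alpha,s}_p}$, and the fact that for any bounded set $E$ with $0<\mathrm{cap}_{\alpha,s}(E)<\infty$ the weight $(V^E)^\delta/\mathrm{cap}_{\alpha,s}(E)$ lies in $\dot{L}^1(C)\cap A_1$ with constants depending only on $n,\alpha,s,\delta$. A Riesz representation on $L^{p'}$-sections followed by a Fatou/dominated convergence passage then reconstructs a unique $f\in\dot{M}^{\alpha,s}_p$ from any bounded functional, and the corresponding block-expansion argument handles the $\dot{B}^{\alpha,s}_{p'}$ case.

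Next, I would show that $(\dot{M}^{\alpha,s}_p)'$ and $\mathcal{\dot{N}}^{\alpha,s}_{p'}$ are Banach function spaces: the former by definition of the K\"othe dual, the latter by copying the proof of Theorem \ref{NisBFS}. Specifically, I would reformulate the norm in the block form $\|g\|_1=\inf\{\sum|c_j|:g=\sum c_j b_j,\ \|b_j\|_{\mathcal{\dot{N}}^{\alpha,s}_{p'}}\le 1\}$ to verify property (P1), then verify the Fatou property (P2) via Koml\'os's theorem applied to a minimizing sequence of weights $\{w_j\}$, whose $L^1$-mass on bounded sets is controlled because $L^\infty_c\hookrightarrow\dot{M}^{\alpha,s}_1$ by \eqref{LweakM-homo}. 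Combined with the homogeneous renorming theorem this yields $(\mathcal{\dot{N}}^{\alpha,s}_{p'})'\approx\dot{M}^{\alpha,s}_p$, and Theorem \ref{XX''} then delivers $\mathcal{\dot{N}}^{\alpha,s}_{p'}\approx(\dot{M}^{\alpha,s}_p)'$.

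For the strongly subadditive case I would adapt the proofs of Lemmas \ref{equivstrongsub} and \ref{NNtilB}: dyadic quasiadditivity of $\mathrm{cap}_{\alpha,s}$ produces the atomic decomposition $f=\sum_{k,l} c_{k,l}a_{k,l}$ along level sets $E_k$ of the optimal weight intersected with annuli $D_l=\{2^{l-1}\le|x|<2^l\}$; absolute continuity of the norm of $\dot{\widetilde{N}}^{\alpha,s}_{p'}$ propagates the Fatou property to $\dot{N}^{\alpha,s}_{p'}$ and $\dot{B}^{\alpha,s}_{p'}$ as in Lemma \ref{NNtilB}. For the general isomorphism chain \eqref{chain-homo} I would introduce the homogeneous functional
\[
\dot{\gamma}_{\alpha,s}(u):=\inf\Bigl\{\int f^s\,dx:0\le f\in L^s,\ I_\alpha*f\ge|u|^{1/s}\text{ q.e.}\Bigr\},
\]
whose subadditivity comes from superadditivity of $h\mapsto(I_\alpha*|h|^{1/s})^s$ via reverse Minkowski, and invoke the Riesz analog of \cite[Proposition 7.4.1]{AH} to get $\int_0^\infty\mathrm{cap}_{\alpha,s}(\{|u|>t\})\,dt\simeq\dot{\gamma}_{\alpha,s}(u)$; this renders $\dot{L}^1(C)$ normable, after which the argument of Theorem \ref{isomorphism2} carries over mechanically. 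The principal obstacle is this last step: verifying that $\dot{\gamma}_{\alpha,s}$ is subadditive and that the associated convex set $H$ behaves well despite the polynomial (rather than exponential) tail of $I_\alpha$, so that the Riesz-capacity analog of \cite[Proposition 7.4.1]{AH} is available in the full range $\alpha s<n$; once this is secured, only the quasiadditivity of $\mathrm{cap}_{\alpha,s}$---which always holds---is needed to complete the block decomposition.
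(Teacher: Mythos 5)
Your proposal follows essentially the same path the paper intends for its homogeneous theorem, namely transplanting the inhomogeneous arguments with $G_\alpha\mapsto I_\alpha$, $\mathrm{Cap}_{\alpha,s}\mapsto\mathrm{cap}_{\alpha,s}$, $A_1^{\mathrm{loc}}\mapsto A_1$, $L^1(C)\mapsto\dot L^1(C)$, and the quasiadditivity of $\mathrm{cap}_{\alpha,s}$ over dyadic annuli in place of \eqref{QAC}; the paper itself gives no separate proof and leaves exactly this translation implicit, citing the homogeneous analog of Lemma \ref{L1CforVEt} via \cite[Lemmas 2, 3 of Sub-section 2.6.3]{MS1} and stating the homogeneous renorming theorem as the substitutes you name. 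Two small points are worth flagging explicitly: in the Koml\'os step of the Fatou-property verification, the constant function $1/C$ used in the inhomogeneous proof of Theorem~\ref{NisBFS} is no longer in $\dot M^{\alpha,s}_1$, so you must replace it with a $\sigma$-finite reference weight (e.g.\ $(1+|x|)^{-n-1}\,dx$, whose density lies in $L^{n/(\alpha s),\infty}\hookrightarrow\dot M^{\alpha,s}_1$) and apply Koml\'os in that weighted $L^1$ rather than only controlling mass on bounded sets; and the concern you raise about \cite[Proposition 7.4.1]{AH} is not in fact an obstacle, since that proposition is proved for general nonlinear potential frameworks covering Riesz kernels, so $\int_0^\infty\mathrm{cap}_{\alpha,s}(\{|u|>t\})\,dt\simeq\dot\gamma_{\alpha,s}(u)$ holds verbatim and the subadditivity of $\dot\gamma_{\alpha,s}$ follows from the same reverse Minkowski argument as in the Bessel case.
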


Note that by \eqref{LweakM-homo} and \eqref{dualre}, all  spaces in \eqref{chain-homo} are continuously embedded into the Lorentz space $L^{\frac{np}{np-\alpha s},1}$.

The homogeneous version of Theorem \ref{triplettheorem} reads as follows.

\begin{theorem}
	Let $p>1$, $\alpha>0$, $s>1$, with $\alpha s < n$. 	We have 
	$$(\mathring{\dot{M}}^{\alpha,s}_{p})^{*} \approx \mathcal{\dot{N}}^{\alpha,s}_{p'},$$
	where we define $\mathring{\dot{M}}_{p}^{\alpha,s}$ as the closure of $C_c$ in $\dot{M}^{\alpha,s}_{p}$.
\end{theorem}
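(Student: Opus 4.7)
The plan is to follow the inhomogeneous proof of Theorem \ref{triplettheorem} verbatim, replacing ${\rm Cap}_{\alpha,s}$, $M^{\alpha,s}_p$, and $\mathcal{N}^{\alpha,s}_{p'}$ by their homogeneous counterparts ${\rm cap}_{\alpha,s}$, $\dot{M}^{\alpha,s}_p$, and $\mathcal{\dot{N}}^{\alpha,s}_{p'}$. The continuous embedding $\mathcal{\dot{N}}^{\alpha,s}_{p'} \hookrightarrow (\mathring{\dot{M}}^{\alpha,s}_p)^*$ is immediate from the identification $\mathcal{\dot{N}}^{\alpha,s}_{p'} \approx (\dot{M}^{\alpha,s}_p)'$ stated earlier, since any such $g$ pairs with $v\in \mathring{\dot{M}}^{\alpha,s}_p \subset \dot{M}^{\alpha,s}_p$ via $v\mapsto \int v g\, dx$.

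The substantive work is the converse. First I would establish a homogeneous analogue of Lemma \ref{finite}: if $\dot{\mathcal{F}}$ denotes the closure in $\dot{M}^{\alpha,s}_p$ of finite linear combinations of characteristic functions of sets of finite Lebesgue measure, then $\dot{\mathcal{F}}^{*}\approx \mathcal{\dot{N}}^{\alpha,s}_{p'}$. The key input is the bound
\begin{equation*}
\|\chi_E\|_{\dot{M}^{\alpha,s}_p}\leq C|E|^{\alpha s/(np)},
\end{equation*}
which follows from the embedding \eqref{LweakM-homo}. This yields $\|\chi_A-\chi_B\|_{\dot{M}^{\alpha,s}_p}\leq C|A\setminus B|^{\alpha s/(np)}$ for $B\subset A$, so for any $L\in \dot{\mathcal{F}}^{*}$ the set function $E\mapsto L(\chi_E)$ is a signed measure absolutely continuous with respect to Lebesgue measure. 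Radon-Nikodym then produces $g\in L^1_{\rm loc}$ with $L(f)=\int fg\,dx$ for all simple $f$; using the identification $\mathcal{\dot{N}}^{\alpha,s}_{p'}\approx (\dot{M}^{\alpha,s}_p)'$ and the bounded-linear-extension principle (exactly as in \cite{ST}) one upgrades $g$ to an element of $\mathcal{\dot{N}}^{\alpha,s}_{p'}$ with $\|g\|_{\mathcal{\dot{N}}^{\alpha,s}_{p'}}\simeq \|L\|$ and $L(f)=\int fg\,dx$ on all of $\dot{\mathcal{F}}$.

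Next I would check that $\mathring{\dot{M}}^{\alpha,s}_p \subset \dot{\mathcal{F}}$. For $v\in C_c(\RR^n)$ with support in a cube $Q$, partition $Q$ into $2^{ni}$ congruent sub-cubes $Q_k$ with centers $x_k$ and set $v_i=\sum_k v(x_k)\chi_{Q_k}$. Uniform continuity of $v$ gives $\|v_i-v\|_{L^\infty}\to 0$; since $v_i-v$ is supported in a fixed bounded set independent of $i$, the estimate $\|\chi_E\|_{\dot{M}^{\alpha,s}_p}\leq C|E|^{\alpha s/(np)}$ combined with a layer-cake argument forces $\|v_i-v\|_{\dot{M}^{\alpha,s}_p}\to 0$. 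Hence $v\in \dot{\mathcal{F}}$, and by closure every element of $\mathring{\dot{M}}^{\alpha,s}_p$ lies in $\dot{\mathcal{F}}$.

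Finally, given $L\in (\mathring{\dot{M}}^{\alpha,s}_p)^{*}$, Hahn-Banach extends $L$ to a bounded linear functional $\tilde L$ on $\dot{\mathcal{F}}$ with $\|\tilde L\|=\|L\|$. Applying the homogeneous analogue of Lemma \ref{finite} to $\tilde L$ yields a unique $g\in \mathcal{\dot{N}}^{\alpha,s}_{p'}$ with $\tilde L(f)=\int f g\,dx$ on $\dot{\mathcal{F}}$ and $\|g\|_{\mathcal{\dot{N}}^{\alpha,s}_{p'}}\simeq \|L\|$; restriction to $\mathring{\dot{M}}^{\alpha,s}_p$ gives the desired representation, and uniqueness of $g$ follows because $C_c$ is dense in $\mathring{\dot{M}}^{\alpha,s}_p$ (two functions pairing equally with every $v\in C_c$ agree a.e.). The main obstacle is establishing the homogeneous Lemma \ref{finite} cleanly, in particular ensuring that the Radon-Nikodym step is valid in this non-embedded-into-$L^1$ setting; however, local boundedness of $E\mapsto L(\chi_E)$ on sets of finite measure, guaranteed by the $|E|^{\alpha s/(np)}$ bound, is enough to run the construction exactly as in \cite[Theorem 4.3]{ST}.
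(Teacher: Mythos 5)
Your proposal is correct and takes essentially the same route the paper intends: transpose Lemma \ref{finite} and the Hahn--Banach argument from the inhomogeneous Theorem \ref{triplettheorem} to the homogeneous setting, using $\mathcal{\dot{N}}^{\alpha,s}_{p'}\approx(\dot{M}^{\alpha,s}_p)'$ and the bound $\|\chi_E\|_{\dot{M}^{\alpha,s}_p}\leq C|E|^{\alpha s/(np)}$ coming from $|E|^{1-\alpha s/n}\leq C\,{\rm cap}_{\alpha,s}(E)$. You also correctly supply the one nontrivial adjustment the paper leaves implicit: since $L^\infty(\RR^n)\not\hookrightarrow \dot{M}^{\alpha,s}_p$, one cannot pass from $\|v_i-v\|_{L^\infty}\to 0$ to $\|v_i-v\|_{\dot{M}^{\alpha,s}_p}\to 0$ via an analogue of \eqref{LinftyM}; instead your observation that $v_i-v$ is supported in the fixed cube $Q$, so that $\|v_i-v\|_{\dot{M}^{\alpha,s}_p}\leq\|v_i-v\|_{L^\infty}\|\chi_Q\|_{\dot{M}^{\alpha,s}_p}$ by the lattice property of $\dot{M}^{\alpha,s}_p$, is exactly what is needed.
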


It is known that the Hardy-Littlewood maximal function ${\bf M}$ and standard  Calder\'on-Zygmund operators are bounded on $\dot{M}^{\alpha,s}_p$ (see \cite{MV} and \cite{MS1}). For other spaces,  we have the following results.

\begin{theorem} Let $q>1$, $\alpha>0$, $s>1$, and $\alpha s < n$. Suppose that $T$ is an operator (not necessarily linear or sublinear) such that 
	$$\int |T (f)|^{q} w dx \leq C_1 \int |f|^{q} w dx$$
holds for all $f\in L^{q}(w)$ and all $w\in A_1$, with a constant $C_1$ depending only on $n, q$, and the bound for the $A_1$ constant of $w$.	
Then $T$ is bounded on  $\dot{N}^{\alpha, s}_{q}, \mathcal{\dot{N}}^{\alpha, s}_{q}$, $(\dot{M}_{q'}^{\alpha ,s})'$, $\dot{\widetilde{N}}^{\alpha, s}_{q}$, and  $\dot{B}_{q}^{\alpha, s}$.
\end{theorem}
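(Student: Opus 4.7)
The strategy mirrors that of Theorem \ref{CZop}, but is cleaner because no localization is needed: in the homogeneous setting the weights appearing in the sup- and inf-norm representations of the spaces in \eqref{chain-homo} are already genuine $A_1$ weights rather than $A_1^{\rm loc}$ ones, so the hypothesis on $T$ can be applied directly without the extension device of Lemma \ref{weightextend}. Moreover, by \eqref{chain-homo} the five listed spaces coincide as sets of measurable functions with pairwise equivalent norms, so any operator norm inequality $\|T(f)\|_X \leq C\|f\|_X$ automatically transfers among them via the identity map. It therefore suffices to establish boundedness of $T$ on a single one of them, and I choose $\mathcal{\dot{N}}^{\alpha,s}_q$.

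For this I would use the inf-representation
$$\|g\|^q_{\mathcal{\dot{N}}^{\alpha,s}_q} = \inf_w \int |g(x)|^q w(x)^{1-q}\,dx,$$
the infimum being taken over $w \in \mathcal{\dot{L}}^1(C) \cap A_1$ with $\|w\|_{\mathcal{\dot{L}}^1(C)} \leq 1$ and $[w]_{A_1} \leq {\bf \overline{c}}(n,\alpha,s)$. Given $f$ and $\epsilon>0$, choose such a $w$ so that $\int |f|^q w^{1-q}\,dx \leq (1+\epsilon)\|f\|^q_{\mathcal{\dot{N}}^{\alpha,s}_q}$. Since the same inf-representation gives $\|T(f)\|^q_{\mathcal{\dot{N}}^{\alpha,s}_q} \leq \int |T(f)|^q w^{1-q}\,dx$, the task reduces to the weighted inequality
$$\int |T(f)|^q w^{1-q}\,dx \leq C \int |f|^q w^{1-q}\,dx.$$
The standard weight duality $A_p \ni v \Longleftrightarrow v^{1-p'} \in A_{p'}$, applied with $p = q'$, together with $A_1 \subset A_{q'}$, shows that $w^{1-q}$ lies in $A_q$ with $A_q$-constant controlled by $[w]_{A_1} \leq {\bf \overline{c}}$. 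Thus the desired bound is exactly the $A_q$-weighted $L^q$ estimate for $T$.

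To deduce this $A_q$-weighted bound from the given $A_1$-weighted hypothesis, I would invoke Rubio de Francia's extrapolation theorem in its pairs formulation, which applies to arbitrary (not necessarily linear or sublinear) operators. A close reading of the classical proof via the Rubio de Francia iteration operator shows that the weights to which the original inequality is ultimately applied are always $A_1$, so the present hypothesis at the $A_1$ level is already sufficient to carry the extrapolation through. With the $A_q$-weighted bound in hand, the inequalities above chain together to yield $\|T(f)\|_{\mathcal{\dot{N}}^{\alpha,s}_q} \leq C(1+\epsilon)\|f\|_{\mathcal{\dot{N}}^{\alpha,s}_q}$, and letting $\epsilon \to 0$ finishes the proof. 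The main subtlety is this extrapolation step, since it is the only place requiring a tool not already developed in the paper; once granted, the remainder of the argument is routine.
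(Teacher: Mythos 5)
Your reduction is sound and mirrors the paper's strategy: in the homogeneous setting no weight extension is needed, the five spaces in \eqref{chain-homo} are mutually isomorphic so one may work with $\mathcal{\dot{N}}^{\alpha,s}_q$ alone, and the inf-representation of its norm reduces the problem to the bound $\int|T(f)|^q w^{1-q}\,dx\leq C\int|f|^q w^{1-q}\,dx$ for $w\in A_1$, which you correctly identify as the $L^q$ estimate for the $A_q$ weight $w^{1-q}$. (The paper's proof of Theorem \ref{CZop} establishes the companion bound $\int|T(f)\chi_{B}|^q w\,dx\leq C\int|f|^q w\,dx$ and then refers to Theorems \ref{firsttheorem} and \ref{isomorphism2}; this directly handles $M^{\alpha,s}_q$ via the sup-representation, and the predual cases are left implicit.)

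The genuine gap is the extrapolation step, which does not go through as stated. From the hypothesis $\int|Tf|^q w\,dx\leq C\int|f|^q w\,dx$ for all $w\in A_1$, Rubio de Francia extrapolation for pairs yields $\int f^p v\,dx\leq C\int g^p v\,dx$ for $p\geq q$ and $v\in A_{p/q}$ only; taking $p=q$ recovers precisely the $A_1$ class, not $A_q$. Your remark that ``the weights to which the original inequality is ultimately applied are always $A_1$'' is correct for the upward direction $p>q$, where the Rubio de Francia iteration manufactures an $A_1$ majorant, but at $p=q$ the iteration is vacuous and the hypothesis must be applied to the given weight itself. Thus the $A_q$-weighted $L^q$ bound required by the inf-representation is strictly stronger than the stated $A_1$ hypothesis and is not deducible from it by extrapolation alone. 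To close the argument one should either strengthen the hypothesis to all $w\in A_q$ with constant controlled by $[w]_{A_q}$ (as holds for the maximal function and Calder\'on-Zygmund operators, and as the argument effectively uses), or assume the $A_1$-weighted $L^r$ bound for every $r$ in a left-neighborhood of $q$, from which extrapolation \emph{does} reach $A_q$ at exponent $q$ via the openness $A_q=\bigcup_{s<q}A_s$.
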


\vspace{.2in}
\noindent {\bf Acknowledgements.} N.C. Phuc is supported in part by Simons Foundation, award number 426071.

\vspace{.2in}

\end{document}